\theoremstyle{plain}
\newtheorem{theorem}{Theorem}[subsection]
\newtheorem{proposition}{Proposition}[subsection]
\newtheorem{definition}{Definition}[subsection]
\newtheorem{remark}{Remark}[subsection]
\newtheorem{lemma}{Lemma}[subsection]
\newtheorem{theorema}{Theorem}
\begin{document}
	
	\title[Equilibrium Stability Open Zooming Systems]{Equilibrium Stability for Open Zooming Systems}
	
	%\author{Jos\' e F. Alves$^1$, Krerley Oliveira$^2$, and Eduardo Santana$^3$}
	
	%\date{\today \\
		%    $^1$ Centro de MatemÃÂ¡tica da Universidade do Porto, josefalves@gmail.com\\
		%   $^2$ Instituto de Matem\'atica  da Universidade Federal de Alagoas, krerley@gmail.com\\
		%  $^3$ Universidade Federal de Alagoas - Penedo, jemsmath@gmail.com
		% }
	
	\author[R. A. Bilbao]{Rafael A. Bilbao}
	\address{Rafael A. Bilbao, Universidad Pedag\'ogica y Tecnol\'ogica de Colombia, Avenida
		Central del Norte 39-115, Sede Central Tunja, Boyac\'a, 150003, Colombia. 
	}
	\email{rafael.alvarez@uptc.edu.co}

	%\author[J. F. Alves]{Jos\'{e} F. Alves}
	%\address{Jos\'{e} F. Alves, Centro de Matem\'{a}tica da Universidade do Porto\\ Rua do Campo Alegre 687\\ 4169-007 Porto\\ Portugal}
	%\email{jfalves@fc.up.pt} \urladdr{http://www.fc.up.pt/cmup/jfalves}
	
	%\author[K. Oliveira]{Krerley Oliveira}
	%\address{Krerley Oliveira, Instituto de Matem\'atica, Universidade Federal de Alagoas, 57072-090 Macei\'o, Brazil} \email{krerley@gmail.com}
	
	\author[E. Santana]{Eduardo Santana}
	\address{Eduardo Santana, Universidade Federal de Alagoas, 57200-000 Penedo, Brazil}
	\email{jemsmath@gmail.com}
	
	%\dedicatory{Dedicated to the memory of Maur\'icio Peixoto}

	%\thanks{JFA was partially supported by CMUP, which is financed by national funds through FCT - Funda\c c\~ao para a Ci\^encia e a Tecnologia,  under the project with reference UIDB/00144/2020. }

	\date{\today}

	\thanks{2010 \textit{Mathematics Subject Classification} Primary: 37D25, 37D35  Secondary: 37B99}
	\thanks{\textit{Key words and phrases:} equilibrium states, equilibrium stability, non-uniform expansion}

	%\thanks{The authors would like to thank Mike Todd for pointing out a mistake in the proof of Proposition~\ref{pr.todd}  in a preliminary version of the text.}

	%\author[1]{JosÃÂ© F. Alves}
	%\author[2]{Krerley Oliveira}
	%\author[3]{Eduardo Santana}
	%\affil[1]{Department of Mathematics, University X}
	%\affil[2]{Department of Mathematics, University X}
	%\affil[3]{Department of Mathematics, University X}
	
	%\date{\today}
	
	\maketitle
	
	\begin{abstract} 
		We prove that for a wide family of open zooming systems and zooming potentials we have equilibrium stability, i.e., the equilibrium states depend continuously on the dynamics and the potential. We consider the open zooming systems with special holes and quite general contractions and zooming potentials with locally H\"older induced potential, which include the H\"older ones. We also prove stability for skew-products with the base being a zooming system like above. As a consequence of finiteness and stability, we obtain uniqueness of equilibrium state.
	\end{abstract}

	\bigskip

	%\tableofcontents

	%\addcontentsline{toc}{section}{Contents}
	
	\section{Introduction}
	The theory of nonuniformly expanding maps has been widely studied and the expanding measures are related. They are particular cases of what we call zooming systems, related to zooming measures introduced in \cite{Pi1}. Here, we deal with open zooming systems.
	
	Roughly speaking, a \textbf{\textit{zooming system}} is a map which extends the notion of non-uniform expansion,
	where we have a type of expansion obtained in the presence of hyperbolic times. The zooming times extend this notion beyond exponential contractions.
	In our context, a system is said to be \textbf{\textit{open}} when the phase space is not invariant. In other words, we begin with a closed map
	$f:M \to M$ and consider a Borel set $H \subset M$, in order to study the orbits with respect to $H$ (called the \textbf{\textit{hole}} of the system):
	if a point $x \in M$ is such that its orbit pass through $H$, we consider that the point $x$ escapes from the system. We observe that it reduces to a closed map when $H = \emptyset$.
	In particular, we study the set of points that never pass through $H$, called the \textbf{\textit{survivor set}}. For zooming systems, we consider holes such that the survivor set contains the zooming set. Also, once a reference measure $m$ is fixed, we study the \textbf{\textit{escape rate}} defined by:
	\[
	\displaystyle \mathcal{E}(f,m,H) = - \lim_{n \to \infty}\frac{1}{n}\log m\Big{(}\cap_{j=0}^{n-1} f^{-j}(M \backslash H)\Big{)}.
	\]
	The study of open dynamics began with Pianigiani and Yorke in the late 1970s (see \cite{PY}). The abstract concept of an open system is important because it leads immediately to the notion of a conditionally invariant measure and escape rate along with a host of detailed questions about how mass escapes or fails to escape from the system under time evolution.
	
	In order to study the escape rate, for example, Young towers are quite useful and the inducing schemes as well. For interval maps, inducing schemes are studied in \cite{DHL} and changes are made in \cite{BDM} to study inducing schemes for interval maps with holes. In the context of open dynamics, the inducing schemes need to be \textbf{\textit{adapted to the hole}}, as considered in \cite{DT}, for example (and referred to as \textbf{\textit{respecting the hole}}). 
	
	%It means that an element of the partition can only escape entirely.
	%It is an important property for constructing the Young towers with holes and what we construct here may be used for this. 
	%
	%However, in this work we will not deal with escape rate and inducing schemes will play an important role in proving the existence of equilibrium states. We begin by proving the existence of inducing schemes adapted to holes of a special type. The context is the zooming systems and the hole is obtained from small balls, by erasing intersections with regular pre-images. Our construction follows the work of Pinheiro in \cite{Pi1} along the same lines, where the main ingredient is the zooming times.
	
	The theory of equilibrium states in dynamical systems was first developed by Sinai, Ruelle and Bowen in the sixties and seventies. 
	It was based on applications of techniques of Statistical Mechanics to smooth dynamics. The classical theory of equilibrium states is developed for continuous maps. Given a continuous map $f: M \to M$ on a compact metric space $M$ and a continuous potential $\phi : M \to \mathbb{R}$, an  \textbf{\textit{equilibrium state}} is an 
	invariant measure that satisfies a variational principle, that is, a measure $\mu$ such that
	\begin{equation}\label{equilibrium}
		\displaystyle h_{\mu}(f) + \int \phi d\mu = \sup_{\eta \in \mathcal{M}_{f}(M)} \bigg{\{} h_{\eta}(f) + \int \phi d\eta \bigg{\}},
	\end{equation}
	where $\mathcal{M}_{f}(M)$ is the set of $f$-invariant probabilities on $M$ and $h_{\eta}(f)$ is the so-called metric entropy of $\eta$.
	
	For measurable maps we define equilibrium states as follows.
	Given a measurable map $f: M \to M$ on a compact metric space $M$ for which the set $\mathcal{M}_{f}(M)$ of invariant measures is non-empty (for example, the continuous maps) and a measurable potential $\phi : M \to \mathbb{R}$, we define the \textbf{\textit{pressure}} $P_{f}(\phi)$ as 
	\begin{equation}\label{pressure}
		\displaystyle P_{f}(\phi) : = \sup_{\eta \in \mathcal{M}_{f}(M)} \bigg{\{} h_{\eta}(f) + \int \phi d\eta \bigg{\}}
	\end{equation}
	and an \textbf{\textit{equilibrium state}} is an invariant measure that attains the supremum, and it is defined analogously to the case for continuous maps (see equation \ref{equilibrium}).
	
	In the context of uniform hyperbolicity (see \cite{B}), which includes uniformly expanding maps (see \cite{OV}), equilibrium states do exist and are unique if the potential is H\"older continuous
	and the map is transitive. In addition, the theory for finite shifts was developed and used to achieve the results for smooth dynamics.
	
	Beyond uniform hyperbolicity, the theory is still far from complete. It was studied by several authors, including Bruin, Keller, Demers, Li, Rivera-Letelier, Iommi and Todd 
	\cite{BDM, BK, BT,DT,IT1,IT2,LRL} for interval maps; Denker and Urbanski  \cite{DU} for rational maps; Leplaideur, Oliveira and Rios 
	\cite{LOR} for partially hyperbolic horseshoes; Buzzi, Sarig and Yuri \cite{BS,Y}, for countable Markov shifts and for piecewise expanding maps in one and higher dimensions. 
	For local diffeomorphisms with some kind of non-uniform expansion, there are results due to Oliveira \cite{O}; Arbieto, Matheus and Oliveira \cite{AMO};
	Varandas and Viana \cite{VV}, all of whom proved the existence and uniqueness of equilibrium states for potentials with low oscillation. Also, for this type of map,
	Ramos and Viana  \cite{RV}  proved it for the so-called  \textbf{\textit{hyperbolic potentials}}, which include these previous ones for the case of non-uniform expansion. The hyperbolicity of the potential is
	characterized by the fact that the pressure emanates from the hyperbolic region. In most of these studies previously cited, the maps do not have the presence of critical sets. In \cite{BDM} and \cite{DT}, for example, the authors develop results for open interval maps with critical sets, but not for hyperbolic potentials and, recently, Alves, Oliveira and Santana proved the existence of equilibrium states for hyperbolic potentials, possibly with the presence of a critical set (see \cite{AOS}). We will see that the potentials considered in \cite{LRL}, which allow critical sets, are included in the potentials considered in \cite{AOS}. Here, we give an example of a class of hyperbolic potentials. It includes the null potential for Viana maps. We stress that one of the consequences in this paper is the existence and uniqueness of measures of maximal entropy for Viana maps and we observe that in \cite{ALP} the authors show, in particular, the existence of at most countably many ergodic measures of maximal entropy. In \cite{PV} Pinheiro-Varandas obtain existence and uniqueness by using another technique for what they call \textbf{\textit{expanding potentials}}, which include the hyperbolic ones and are included in the \textbf{\textit{zooming potentials}}. We have that the class of hyperbolic potentials is equivalent to the class of continuous zooming potentials and that they include the null one. Similar results can be seen in \cite{BOS} for random systems.
	
	%For interval maps and the so-called \textbf{\textit{geometric potentials}} $\phi_{t} = -t\log \mid Df \mid$, equilibrium states were studied in  \cite{BT}, \cite{IT1}, \cite{IT2} and \cite{PS}. These results inspired ours, for measurable zooming systems on metric spaces, where the potential is $\phi_{t} = -t\log J_{\mu}f$ and $J_{\mu}f$ is a Jacobian of the reference measure and we call them \textbf{\textit{pseudo-geometric potentials}}, which are examples of measurable ones. We also prove it for the hyperbolic potentials with finite pressure. All results are for open dynamics, which reduce to the corresponding closed dynamics if the hole is empty. We also mention the works \cite{DT2} and \cite{PU} for important comprehension of the theory of equilibrium states for open systems and HÃÂ¶lder and geometric potentials. 

	The present paper studies the equilibrium stability of open zooming systems. Once the finiteness of equilibrium states was obtained in \cite{S1}, the continuity of equilibrium states as studied in \cite{Ar} when we do not have uniqueness can be studied. In \cite{ARS} and \cite{BR} the authors study stability with uniqueness. Similar results of existence and uniqueness but  without stability can be seen in \cite{SSV}.
	
	We consider zooming systems with a quite general condition on the contractions and zooming potentials, proving the equilibrium stability in our Theorem \ref{A} for a family of such systems which has a common reference zooming measure. In Theorem \ref{thB} we also prove stability for skew-products with the base being a zooming system like above. In Theorem \ref{C} we use finiteness and stability to obtain uniqueness of equilibrium state.
	
	Our strategy is also similar to that used in \cite{ARS} where we prove that any accumulation point of a sequence of equilibrium states is also an equilibrium state for the limit system. We use a result in \cite{Ar} which deals with (semi)continuity of the pressure and the continuity of equilibrium states. The key point is the inequality between pressures. We also highlight the work \cite{FT} which also deals with this kind of problem. To prove stability for skew products we follow ideas of \cite{ARS}. Finally, to prove uniqueness, we show that the set of potentials with uniqueness is dense in the set of potentials with finiteness and by using stability we are able to prove uniqueness in general.

	\bigskip

	\paragraph{\bf Acknowledgement}{The authors would like to thank Jos\'e F. Alves for fruitful conversations to improve this work.}

	\section{Setup and Main Results}\label{Setup}
	
	In this section we give some definitions and state our main results. 
	
	\subsection{Zooming Sets and Measures}
	
	For differentiable dynamical systems, hyperbolic times are a powerful tool to obtain a type of expansion in the context of non-uniform expansion. As we can find in \cite{AOS}, it may be generalized for systems considered in a metric space, also with exponential contractions. The zooming times generalizes it beyond the exponential
	context. Details can be seen in \cite{Pi1}.
	
	Let $f : M \to M$ be a measurable map defined on a connected, compact, separable metric space $M$.
	
	\begin{definition}
		(Zooming contractions). A \textbf{\textit{zooming contraction}} is a sequence of functions $\alpha_{n}: [0,+\infty) \to [0,+\infty)$ such that
		
		\begin{itemize}
			\item $\alpha_{n}(r) < r, \text{for all} \, \, n \in \mathbb{N}, \text{for all} \, \, r>0.$
			
			\item $\alpha_{n}(r)<\alpha_{n}(s), \,\, if \,\, 0<r<s, \text{for all} \, \, n \in \mathbb{N}$.
			
			\item $\alpha_{m} \circ \alpha_{n}(r) \leq \alpha_{m+n}(r), \text{for all} \, \, r>0, \text{for all} \, \, m,n \in \mathbb{N}$.
			
			\item $\displaystyle \sup_{r \in (0,1)} \sum_{n=1}^{\infty}\alpha_{n}(r) < \infty$.
		\end{itemize}
		
	\end{definition}
	
	As defined in \cite{PV}, we call the contraction $(\alpha_{n})_{n}$ \textbf{\textit{exponential}} if $\alpha_{n}(r) = e^{-\lambda n} r$ for some $\lambda > 0$ and \textbf{\textit{Lipschitz}} if $\alpha_{n}(r) = a_{n} r$ with $0 \leq a_{n} < 1, a_{m}a_{n} \leq a_{m+n}$ and $\sum_{n=1}^{\infty} a_{n} < \infty$. In particular, every exponential contraction is Lipschitz. We can also have the example with $a_{n} = (n+b)^{-a}, a > 1, b>0$.

	\begin{definition}\label{times}
		(Zooming times). Let $(\alpha_{n})_{n}$ be a zooming contraction and $\delta>0$. We say that $n \in \mathbb{N}$ is an $(\alpha,\delta)$\textbf{\textit{-zooming time}} for $p \in X$ if there exists 
		a neighbourhood $V_{n}(p)$ of $p$ such that
		
		\begin{itemize}
			\item $f^{n}$ sends $\overline{V_{n}(p)}$ homeomorphically onto $\overline{B_{\delta}(f^{n}(p))}$;
			
			\item $d(f^{j}(x),f^{j}(y)) \leq \alpha_{n - j}(d(f^{n}(x),f^{n}(y)))$ for every $x,y \in V_{n}(p)$ and every $0 \leq j < n$.
		\end{itemize}
		
		We call $B_{\delta}(f^{n}(p))$ a \textbf{\textit{zooming ball}} and $V_{n}(p)$ a  \textbf{\textit{zooming pre-ball}}.
	\end{definition}
	
	We denote by $Z_{n}(\alpha,\delta,f)$ the set of points in $M$ for which $n$ is an $(\alpha, \delta)$- zooming time.
	
	\begin{definition}
		(Zooming measure) A $f$-non-singular finite measure $\mu$ defined on the Borel sets of M is called a \textbf{\textit{weak zooming measure}} if $\mu$ almost every point has 
		infinitely many $(\alpha, \delta)$-zooming times. A weak zooming measure is called a \textbf{\textit{zooming measure}} if
	\end{definition}
	\[
	\displaystyle \limsup_{n \to \infty} \frac{1}{n} \{1 \leq j \leq n \mid x \in Z_{j}(\alpha,\delta,f)\} > 0,
	\]
	$\mu$ almost every $x \in M$.
	
	\begin{definition}
		(Zooming set) A forward invariant set $\Lambda=\Lambda(\alpha,\delta,\mu) \subset M$ is called a \textbf{\textit{zooming set}} if the above inequality holds for every $x \in \Lambda$. 
	\end{definition}
	\begin{remark}
		We stress that the definition of the zooming set $\Lambda$ depends on the contraction $\alpha$ and the parameter $\delta$. Moreover, every positively invariant subset $\Lambda_{0} \subset \Lambda$ is also a zooming set if $\mu(\Lambda_{0}) = \mu(\Lambda)$. 
	\end{remark}
	
	\begin{definition}
		(Bounded distortion) Given a measure $\mu$ with a jacobian $J_{\mu}f$, we say that the measure has \textbf{\textit{bounded distortion}} if there exists $\rho > 0$ such that
		\[
		\bigg{|}\log \frac{J_{\mu}f(y)}{J_{\mu}f(z)} \bigg{|} \leq \rho d(f^{n}(y),f^{n}(z)),
		\]
		for every $y,z \in V_{n}(x)$, $\mu$-almost everywhere $x \in M$, for every zooming time $n$ of $x$.
	\end{definition}
	
	The map $f$ with an associated zooming measure is called a \textbf{\textit{zooming system}}.
	
	\bigskip
	
	\subsection{Markov structure with special holes} The following result guarantees the existence of Markov structures adapted to holes of a special type. See details in \cite{S1}[Theorem A].
	
	\begin{theorem}
		\label{structure}
		Let $(f,M,\mu,\Lambda)$ be a zooming system. Let $r_{0} > 0$ and let $\mathcal{A} = \{B_{r}(p_{i}) \mid i=1,2,\dots,k , r < r_{0}\}$ be a finite open cover of $M$ such that $B_{r}(p_{i}) \cap B_{r/2}(p_{j}) = \emptyset, \text{for all} \, \, i,j \leq k, j\neq i$. We assume that $H \subset M$ satisfies either $H = \emptyset$ or $H$ is a special type of open set chosen such that   
		\[
		\displaystyle \cup_{j=1}^{t} B_{\frac{r_{k_{j}}}{2}}(p_{k_{j}}) \subset H \subset \cup_{j=1}^{t} B_{r_{k_{j}}}(p_{k_{j}}),
		\]
		for some $k_{1},\dots,k_{t} \leq k$. Then if $r_{0}$ is sufficiently small, depending only on $(f,M,\mu,\Lambda)$, there exists a finite Markov structure adapted to the hole $H$.
	\end{theorem}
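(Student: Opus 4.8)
The plan is to realize the Markov structure as a Young-type tower built from the $(\al,\de)$-zooming pre-balls, following the first-return-plus-covering technique used for zooming measures, and then to refine and trim it so as to become compatible with the hole. Throughout, $\al$, $\de$ and $\La$ are the zooming data of the system, and the two places where the special shape of $H$ and the smallness of $r_{0}$ enter are (i) the choice of the reference domain and the branch scale, and (ii) the final refinement that makes $H$ a union of tower cylinders.

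First I would set up the local estimates. For $\mu$-a.e.\ $x\in\La$ there is a positive upper density of $(\al,\de)$-zooming times, and at each such time $n$ a zooming pre-ball $V_{n}(x)$ on which $f^{n}$ is a homeomorphism onto $\overline{B_{\de}(f^{n}(x))}$, with the intermediate iterates contracted by $(\al_{n-j})_{j}$; since $\mu$ has bounded distortion, the inverse branches of $f^{n}$ have bounded $\mu$-distortion on these pre-balls. I would then require $r_{0}<\de/2$, so that whenever $n$ is a zooming time of $x$ and $f^{n}(x)$ lies in a cover ball $B_{r_{i}}(p_{i})\in\SA$ one has $B_{r_{i}}(p_{i})\subset B_{\de}(f^{n}(x))=f^{n}(V_{n}(x))$; a further shrinking of $r_{0}$, depending only on $(f,M,\mu,\La)$, absorbs the distortion constants that appear below.

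Next I would construct the Markov structure for the closed system. Remove the balls $B_{r_{k_{1}}}(p_{k_{1}}),\dots,B_{r_{k_{t}}}(p_{k_{t}})$ and let the reference (base) domain $\De_{0}$ be the closure of the union of the remaining cover balls, a finite union of positive $\mu$-measure since $\La$ lies in the survivor set. Call a zooming time $n$ of $x$ a \emph{return} if $f^{n}(x)\in\De_{0}$ and the branch of $(f^{n})^{-1}$ at $x$ carries $\De_{0}$ into the cover ball containing $x$; a Vitali-type covering argument on the zooming pre-balls, combined with a first-return stopping rule, then yields a countable family $\{U_{\ell}\}$ of pairwise disjoint Borel subsets of $\De_{0}$ covering $\De_{0}\cap\La$ mod $\mu$, a return time $R$ that is constant on each $U_{\ell}$ and is a zooming time, and homeomorphisms $f^{R}\colon U_{\ell}\to\De_{0}$. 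The zooming contraction supplies the uniform backward contraction, the bounded distortion of $\mu$ supplies the bounded distortion of the structure, and the finiteness of $\mu$ makes this a finite Markov structure for $(f,M,\mu,\La)$ with $H=\emptyset$.

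Finally I would adapt the structure to $H$. For $0<j<R$ one has $\operatorname{diam} f^{j}(U_{\ell})\le\al_{R-j}(\operatorname{diam}\De_{0})$, so all columns are small once $r_{0}$ is small; using the freedom in the choice of the special open set $H$ inside the sandwich $\cup_{j}B_{r_{k_{j}}/2}(p_{k_{j}})\subset H\subset\cup_{j}B_{r_{k_{j}}}(p_{k_{j}})$, I would refine $\{U_{\ell}\}$ by the countable partition obtained by pulling back the cover $\SA$ along the orbit segments $x,f(x),\dots,f^{R(x)}(x)$, and take $H$ to be the interior of the union of those refined columns that are contained in $\cup_{j}B_{r_{k_{j}}}(p_{k_{j}})$. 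Then every column of the refined structure is either contained in $H$ or disjoint from $\overline{H}$, so $H$ pulls back to a union of tower cylinders, i.e.\ the structure respects the hole; the refinement keeps the branches full, since the cuts are pull-backs by the homeomorphisms $f^{j}$, so $f^{R}$ still carries each new piece onto $\De_{0}$, and it preserves bounded distortion; and any refined column meeting $B_{r_{k_{j}}/2}(p_{k_{j}})$ is small enough to lie inside $B_{r_{k_{j}}}(p_{k_{j}})$, which yields the required inclusion $\cup_{j}B_{r_{k_{j}}/2}(p_{k_{j}})\subset H$. The hard part is precisely this last step: one must pin down the special hole $H$, passing if necessary to a Markov refinement of $\SA$, so that no tower column straddles $\partial H$ while the two inclusions of the sandwich are preserved, and making the column diameters $\al_{R-j}(\operatorname{diam}\De_{0})$ fit between the concentric radii $r_{k_{j}}/2$ and $r_{k_{j}}$ and inside the distortion tolerance is exactly where the quantitative hypothesis ``$r_{0}$ sufficiently small, depending only on $(f,M,\mu,\La)$'' is consumed.
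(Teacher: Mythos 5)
You should first note that the paper contains no proof of this statement at all: it is imported verbatim from \cite{S1} (``See details in \cite{S1}[Theorem A]''), so there is no in-paper argument to compare against; your sketch has to stand on its own. In spirit your strategy is the right one and matches the known construction for zooming measures (induce on zooming pre-balls, use bounded distortion of $\mu$, take $r_{0}$ small relative to $\delta$, and arrange the hole to be a union of elements of the resulting structure), but as written two steps do not work.

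First, you take the base domain $\Delta_{0}$ to be the union of \emph{all} surviving cover balls and assert full branches $f^{R}\colon U_{\ell}\to\Delta_{0}$. An inverse branch produced by a zooming time $n$ is only defined on the single zooming ball $B_{\delta}(f^{n}(x))$, and $\Delta_{0}$, being spread over essentially all of $M$, is not contained in any ball of radius $\delta$; so no branch can be a homeomorphism onto $\Delta_{0}$, nor can it ``carry $\Delta_{0}$ into the cover ball containing $x$''. The correct structure has finitely many \emph{small} base elements (the cover balls $B_{r_{i}}(p_{i})$ themselves, or nested sets inside them), each branch mapping onto exactly one of them; this is also where the word ``finite'' in ``finite Markov structure'' comes from. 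Second, this error propagates into your key quantitative step: the column bound $\operatorname{diam} f^{j}(U_{\ell})\le\alpha_{R-j}(\operatorname{diam}\Delta_{0})$ gives nothing, since $\operatorname{diam}\Delta_{0}$ is comparable to $\operatorname{diam} M$ and does not shrink with $r_{0}$ (and $\alpha_{1}$ of a large number need not be small), whereas with the correct small bases the bound is $\alpha_{R-j}(2r_{0})$, which is exactly what lets the refined pieces fit between the radii $r_{k_{j}}/2$ and $r_{k_{j}}$ and makes the hole a union of Markov elements. Finally, the central technical difficulty — producing pairwise disjoint inducing domains while keeping every return full and Markov, which in the zooming setting requires Pinheiro's nested-sets construction rather than a bare ``Vitali covering plus first-return stopping rule'' — is dispatched in one sentence, and that is precisely the content of \cite{S1}[Theorem A] that a proof of this statement would have to supply.
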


	\subsection{Pressure, zooming potentials and equilibrium states} For measurable maps we recall the definition of an equilibrium state, given in the Introduction. Given a measurable map $f: M \to M$ on a compact metric space $M$ for which the set $\mathcal{M}_{f}(M)$ of $f$-invariant measures is non-empty (for example, the continuous maps) and a measurable potential $\phi : M \to \mathbb{R}$, we define the \textbf{\textit{pressure}} $P_{f}(\phi)$ as in equation \ref{pressure} and an  \textbf{\textit{equilibrium state}} is an invariant measure that attains the supremum as in equation \ref{equilibrium}.
	
	Denote by $\mathcal{Z}(\Lambda)$ the set of invariant zooming measures supported on $\Lambda$ (in [\cite{Pi1}, Theorem C] it is proved that this set is nonempty under the same hypothesis as ours for the map, that is, a zooming measure with bounded distortion). 
	
	We define a \textbf{\textit{zooming potential}} as a measurable potential $\phi:M \to \mathbb{R}$ such that
	\[
	\displaystyle  \sup_{\eta \in \mathcal{Z}(\Lambda)^{c}} \bigg{\{} h_{\eta}(f) + \int \phi d\eta \bigg{\}} < \sup_{\eta \in \mathcal{Z}(\Lambda)} \bigg{\{} h_{\eta}(f) + \int \phi d\eta \bigg{\}}.
	\]
	It is analogous to the definition of expanding potential that can be found in \cite{PV}.
	
	Denote by $h(f)$ the pressure of the potential $\phi \equiv 0$ (or topological entropy), which we call simply \textbf{\textit{entropy}} of $f$, that is,
	\[
	\displaystyle h(f) := P_{f}(0)  = \sup_{\eta \in \mathcal{M}_{f}(M)} \bigg{\{} h_{\eta}(f) \bigg{\}}.
	\]

	For open systems, we consider another type of pressure and equilibrium states as follows.
	
	\begin{definition}(Open pressure and open equilibrium states)
		Given an open system $(f,M,H)$ with a hole $H$, we define the \textbf{\textit{open pressure}} as
		\begin{equation}\label{open pressure}
			\displaystyle P_{f,H}(\phi) : = \sup_{\eta \in \mathcal{M}_{f}(M,H)} \bigg{\{} h_{\eta}(f) + \int \phi d\eta \bigg{\}},
		\end{equation}
		where $\mathcal{M}_{f}(M,H)$ is the set of $f$-invariant measures $\eta$ such that $\eta(H) = 0$. Moreover, we define an \textbf{\textit{open equilibrium state}} as an invariant measure $\mu \in \mathcal{M}_{f}(M,H)$ such that
		\begin{equation}\label{open equilibrium}
			\displaystyle h_{\mu}(f) + \int \phi d\mu = \sup_{\eta \in \mathcal{M}_{f}(M,H)} \bigg{\{} h_{\eta}(f) + \int \phi d\eta \bigg{\}},
		\end{equation} 	
	\end{definition}

	\begin{definition}
		(Backward separated map) We say that a map $f:M \to M$ is \textbf{\textit{backward separated}} if for every finite set $F \subset M$ we have
		\[
		\displaystyle d\big{(}F, \cup_{j=1}^{n} f^{-j}(F) \backslash F \big{)} > 0, \text{for all} \, \,\, n \geq 1.
		\]
	\end{definition}
	Observe that if $f$ is such that $\sup\{\# f^{-1} (x) \mid x \in M \}< \infty$, then $f$ is backward separated.

	\begin{definition}
		(Induced potential) Given an inducing scheme $(F,\mathcal{P})$ and a potential $\phi:M \to \mathbb{R}$ we define the  \textbf{\textit{induced potential}} as 
		\[
		\overline{\phi}(x) =\sum_{j=0}^{R(x)-1} \phi(f^{j}(x)).
		\]
	\end{definition}
	
	\begin{definition}
		(Locally H\"older potential) Let $\Sigma$ be the space of symbols associated to the inducing scheme $(F,\mathcal{P})$. Given a potential $\Phi : \Sigma \to \mathbb{R}$, we say that $\Phi$ is \emph{locally H\"older} if there exist $A > 0$ and 
		$\theta \in (0,1)$ such that for all $n \in \mathbb{N}$ 
		\[
		V_{n}(\Phi) : = \sup\left\{|\Phi(x) - \Phi(y)| \colon  x, y \in C_{n}\right\} \leq A\theta^{n}.
		\]
	\end{definition}

	\begin{theorem}[\cite{S1}, Theorem B]
		\label{deterministic}
		Given a measurable open zooming system $f:M \to M$ which is backward separated, if the contraction $(\alpha_{n})_{n}$ satisfies $\alpha_{n}(r) \leq ar$ for some $a \in (0,1)$, every $n \in \mathbb{N}$ and every $r \in [0,+\infty)$ (Lipschitz, for example), with zooming set  $\Lambda$ (previously fixed) and hole $H$ given by Theorem \ref{structure}. 
		
		\begin{itemize}
			
			\item If $\phi:M \to \mathbb{R}$ is zooming potential with  finite pressure $P_{f}(\phi)$ and locally H\"older induced potential ($\phi$ H\"older, for example), then there are \textbf{finitely many}  ergodic equilibrium states and they are zooming measures.  
			
			\item If the zooming set $\Lambda$ is not dense in $M$, we can choose the hole such that $\Lambda \cap H = \emptyset$ and obtain the equilibrium states giving full mass to the survivor set $M^{\infty}$. Then, in this case, the equilibrium states are open.
			
		\end{itemize}
		
	\end{theorem}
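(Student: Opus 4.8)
The strategy is to transfer the problem, via the inducing scheme furnished by Theorem \ref{structure}, to the thermodynamic formalism of countable Markov shifts, solve it there, and push the solution back to $M$. First I would fix the parameters so that Theorem \ref{structure} applies: a small $r_{0}$, a finite cover $\mathcal{A}$, and a hole $H$ of the prescribed special form, obtaining a finite Markov structure $(F,\mathcal{P})$ adapted to $H$ with return time $R$. Since the domains of the branches of $f^{R}$ are zooming pre-balls and $\alpha_{n}(r)\le ar$ with $a<1$, on each branch one has $d(x,y)\le a\, d(f^{R(x)}(x),f^{R(x)}(y))$, so the induced map $f^{R}$ is \emph{uniformly} expanding; together with the bounded distortion hypothesis this yields a H\"older coding map $\pi:\Sigma\to M$, from the associated countable-alphabet shift $\Sigma$, that semiconjugates the shift $\sigma$ to $f^{R}$.

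Next I would lift the potential: $\Phi:=\overline{\phi}\circ\pi$ is locally H\"older by hypothesis, hence has summable variations. Normalizing by $c:=P_{f}(\phi)$, which is finite by hypothesis, I would run the Ruelle--Perron--Frobenius theory for countable Markov shifts (passing to a full shift by a further finite recoding if convenient) on each topologically transitive component: this produces a conformal eigenmeasure and a $\sigma$-invariant Gibbs measure $\mu_{\Sigma}$ with density bounded away from $0$ and $\infty$, once positive recurrence at level $c$ is verified. The zooming-potential inequality is exactly what forces the pressure to be realized on the recurrent part, and the summability $\sup_{r\in(0,1)}\sum_{n}\alpha_{n}(r)<\infty$ together with the Gibbs property yields an exponential tail bound $\mu_{\Sigma}(R>n)\le C\theta^{n}$, so $\int R\,d\mu_{\Sigma}<\infty$. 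Because the Markov structure is \emph{finite}, there are only finitely many transitive components, which is the source of ``finitely many ergodic equilibrium states''.

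I would then project back: let $\widehat{\mu}$ be the saturation of $\mu_{\Sigma}$ along the orbit blocks $x,f(x),\dots,f^{R(x)-1}(x)$ and set $\mu=\pi_{*}\widehat{\mu}$, which is a probability because $\int R\,d\mu_{\Sigma}<\infty$. Abramov's and Kac's formulas give
\[
h_{\mu}(f)+\int\phi\,d\mu=\frac{1}{\int R\,d\mu_{\Sigma}}\Big(h_{\mu_{\Sigma}}(\sigma)+\int\Phi\,d\mu_{\Sigma}\Big).
\]
Combined with the variational principle upstairs and a lifting lemma — every ergodic $f$-invariant measure giving positive mass to $\Lambda$ lifts to a $\sigma$-invariant measure $\tilde{\mu}$ with $\int R\,d\tilde{\mu}<\infty$ and no loss of free energy — this shows that $\mu$ attains $P_{f}(\phi)$. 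Conversely, the zooming-potential hypothesis forces any equilibrium state to charge $\Lambda$, hence (being ergodic) to have full mass there, hence to lift; so it must coincide with one of the finitely many measures just constructed, and in particular every ergodic equilibrium state is a zooming measure supported on $\Lambda$.

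For the open case, if $\Lambda$ is not dense, pick a ball in $M\setminus\overline{\Lambda}$ and, shrinking $r_{0}$ if needed, arrange the cover $\mathcal{A}$ and the special hole $H\subset\bigcup_{j=1}^{t}B_{r_{k_{j}}}(p_{k_{j}})$ inside that ball, so that $H\cap\Lambda=\emptyset$. The equilibrium states constructed above are supported on $\Lambda\subset M^{\infty}$, hence $\mu(H)=0$; since $\mathcal{M}_{f}(M,H)\subset\mathcal{M}_{f}(M)$ we get $P_{f,H}(\phi)=P_{f}(\phi)$ and these measures are open equilibrium states. The step I expect to be hardest is the inducing-scheme bookkeeping: showing that the coding $\pi$ is essentially injective, establishing the lifting lemma with no loss of pressure, and verifying positive recurrence together with the tail estimate for $R$ — this is precisely where backward separation (controlling how the preimages of $H$ meet $H$, so that the Markov structure genuinely respects the hole) and the Lipschitz bound $\alpha_{n}(r)\le ar$ (providing the uniform expansion and the exponential tail) do the real work.
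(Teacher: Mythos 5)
There is nothing in this paper to compare your argument against: Theorem \ref{deterministic} is imported verbatim from \cite{S1} (Theorem B there) and the present paper gives no proof of it, using it only as a black box to define the family $\mathcal{FZ}$ and to feed Theorems \ref{A} and \ref{C}. That said, your reconstruction follows exactly the route one expects in \cite{S1} and in the closely related works \cite{AOS} and \cite{PV}: induce via the finite Markov structure of Theorem \ref{structure}, use $\alpha_{n}(r)\le ar$ to get uniform expansion and bounded distortion for $f^{R}$, apply Ruelle--Perron--Frobenius/Gibbs theory for the countable shift to the locally H\"older induced potential normalized by $P_{f}(\phi)$, project back with Abramov--Kac, and use the zooming-potential inequality plus a lifting lemma to show every ergodic equilibrium state charges $\Lambda$, lifts, and hence coincides with one of the finitely many projected Gibbs measures; the open case is handled by placing the hole away from $\overline{\Lambda}$ so the constructed measures live on the survivor set and $P_{f,H}(\phi)=P_{f}(\phi)$. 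You correctly identify the genuinely delicate points (liftability with no loss of free energy, positive recurrence, and the tail of $R$). One caution: the exponential tail bound $\mu_{\Sigma}(R>n)\le C\theta^{n}$ does not follow just from $\sup_{r}\sum_{n}\alpha_{n}(r)<\infty$ and the Gibbs property; in this generality one typically only needs, and only gets, $\int R\,d\mu_{\Sigma}<\infty$ via the liftability argument, so you should not lean on an exponential tail as stated. With that adjustment your sketch is a faithful outline of the cited proof rather than an alternative to anything proved here.
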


	\subsection{Equilibrium stability} Let $\mathcal{CM}$ be the family of continuous maps $f: M \to M$ and $\mathcal{CP}$ the family of continuous potentials $\phi : M \to \mathbb{R}$. We say that a family $\mathcal{F} \subset \mathcal{CM} \times \mathcal{CP}$ for which every pair $(f,\phi) \in \mathcal{F}$ has existence of equilibrium states is  \textbf{\textit{equilibrium stable}} if we have continuity of the equilibrium states. It means that if the pair $(f_{n},\phi_{n}), n \geq 1$ has the measure $\mu_{n}$ as an equilibrium state and $f_{n} \to f_{0}$ and $\phi_{n} \to \phi_{0}$, then every accumulation point $\mu_{0}$ of $\mu_{n}$ is an equilibrium state for the pair $(f_{0},\phi_{0})$. The topology on $\mathcal{CM}$ is the $C^{0}$ one, on $\mathcal{CP}$ is also the $C^{0}$ topology and on the measures is the weak-$*$ topology. The topology on $\mathcal{CM} \times \mathcal{CP}$ is the product topology. By Theorem \ref{deterministic} we have that the following family has finiteness of equilibrium states:
	\[
	\mathcal{FZ} = \{(f,\phi) \mid f,\phi \,\, \text{are both zooming  and} \,\, \phi \,\, \text{has induced potential locally H\"older} \}.
	\]

	\begin{theorema}
		\label{A}
		$\mathcal{FZ}$ is equilibrium stable. 
	\end{theorema}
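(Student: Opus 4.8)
The strategy is the classical "accumulation points of equilibrium states are equilibrium states" argument, adapted to the zooming setting as in \cite{ARS}. Let $(f_n,\phi_n) \in \mathcal{FZ}$ with $f_n \to f_0$ in $C^0$, $\phi_n \to \phi_0$ in $C^0$, and let $\mu_n$ be an equilibrium state for $(f_n,\phi_n)$; passing to a subsequence we may assume $\mu_n \to \mu_0$ in the weak-$*$ topology. Since each $f_n$ is continuous and $\mu_n$ is $f_n$-invariant, a standard argument shows $\mu_0$ is $f_0$-invariant. We must show
\[
h_{\mu_0}(f_0) + \int \phi_0 \, d\mu_0 = P_{f_0}(\phi_0).
\]
The proof splits into the two inequalities, and the interesting one is $h_{\mu_0}(f_0) + \int \phi_0 \, d\mu_0 \geq P_{f_0}(\phi_0)$.

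First I would establish upper semicontinuity of the quantity $h_{\mu_n}(f_n) + \int \phi_n \, d\mu_n$ along the sequence, i.e. that $\limsup_n \left( h_{\mu_n}(f_n) + \int \phi_n \, d\mu_n \right) \leq h_{\mu_0}(f_0) + \int \phi_0 \, d\mu_0$. The integral term is easy: $\int \phi_n \, d\mu_n \to \int \phi_0 \, d\mu_0$ because $\phi_n \to \phi_0$ uniformly and $\mu_n \to \mu_0$ weak-$*$. For the entropy term one needs upper semicontinuity of the entropy map in the relevant topology; in the zooming framework this is not automatic for abstract measurable maps, so here I would invoke the structure provided by Theorem \ref{deterministic}: the $\mu_n$ are zooming measures, hence they lift to the finite Markov structures / inducing schemes adapted to the hole, and on the symbolic side (a finite-alphabet, or countable-alphabet-with-good-tails, shift with locally H\"older induced potentials) one has the semicontinuity statements proved in \cite{Ar}. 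This is exactly the "(semi)continuity of the pressure and continuity of equilibrium states" result the authors say they will use. Combining this with the variational principle gives the upper bound
\[
h_{\mu_0}(f_0) + \int \phi_0 \, d\mu_0 \;\geq\; \limsup_n \Big( h_{\mu_n}(f_n) + \int \phi_n \, d\mu_n \Big) \;=\; \limsup_n P_{f_n}(\phi_n).
\]

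Second, the key point — and the main obstacle — is to show $\liminf_n P_{f_n}(\phi_n) \geq P_{f_0}(\phi_0)$, i.e. lower semicontinuity of the pressure as the dynamics and potential vary. The difficulty is that the supremum defining $P_{f_0}(\phi_0)$ might be approached (or attained) by a measure $\nu_0$ that one cannot directly perturb into an $f_n$-invariant measure with comparable entropy and integral. The way around this, following \cite{ARS} and using that $(f_0,\phi_0)$ is a zooming pair so that its equilibrium states are zooming measures supported on $\Lambda$ (Theorem \ref{deterministic}), is to realize $\nu_0$ through the inducing scheme: $\nu_0$ corresponds to a shift-invariant measure with finite induced pressure, one approximates it by measures supported on finitely many cylinders, transports these to the nearby systems $f_n$ via the (uniformly controlled, adapted-to-the-hole) Markov structures, and uses the locally H\"older control on the induced potentials to see that entropy plus integral is nearly preserved. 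Assembling the two inequalities: $h_{\mu_0}(f_0)+\int\phi_0\,d\mu_0 \geq \limsup_n P_{f_n}(\phi_n) \geq \liminf_n P_{f_n}(\phi_n) \geq P_{f_0}(\phi_0)$, while $\leq$ is the trivial direction of the variational principle; hence $\mu_0$ is an equilibrium state for $(f_0,\phi_0)$, proving that $\mathcal{FZ}$ is equilibrium stable. The technical heart is thus the uniform control of the Markov structures / inducing schemes across the perturbation and the transfer of the variational data through them; I expect this to be where most of the work in the paper lies, relying heavily on Theorem \ref{structure}, Theorem \ref{deterministic}, and the semicontinuity machinery of \cite{Ar}.
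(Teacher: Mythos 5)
Your overall architecture is the same as the paper's: pass to an accumulation point $\mu_0$, check $f_0$-invariance, prove an upper semicontinuity statement for the entropies of the $\mu_n$, and prove a lower semicontinuity statement for the pressures, then combine. The upper bound is handled in the paper essentially as you indicate, except that no lift to the symbolic level is needed: since each $\mu_n$ is a zooming measure, the zooming contraction forces $\operatorname{diam}(P^n_m(x))\le \alpha^n_m(2\delta)\to 0$ along the (infinitely many) zooming times, so any finite partition $\mathcal{P}$ of diameter less than $\delta/2$ with $\mu_0(\partial\mathcal{P})=0$ is generating, and \cite{Ar}[Theorem 11] then gives $\limsup_n h_{\mu_n}(f_n)\le h_{\mu_0}(f_0,\mathcal{P})$ directly.

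The genuine gap is in your second step, which you yourself identify as "the technical heart" but do not carry out. Your plan is to approximate a measure realizing $P_{f_0}(\phi_0)$ by measures on finitely many cylinders of the inducing scheme and to transport these to the perturbed maps $f_n$ via "uniformly controlled, adapted-to-the-hole Markov structures." Nothing in Theorem \ref{structure} or Theorem \ref{deterministic} gives any continuity or uniformity of the Markov structures in the map: the structures are built separately for each zooming system, membership in $\mathcal{FZ}$ imposes no relation between the inducing scheme of $f_n$ and that of $f_0$, and under mere $C^0$ convergence such a transfer of cylinder measures (with control of entropy, return times, and induced integrals) is not available. The paper avoids this entirely: it takes an equilibrium state $\mu$ of the limit pair, forms the Ces\`aro averages $\eta^n_k=\frac1k\sum_{i=0}^{k-1}f^i_{n*}\mu$ to obtain $f_n$-invariant measures $\eta_n$, and uses the Brin--Katok local entropy formula together with the inclusion $B_{f_p}(x,n,\epsilon/2)\subset B_f(x,n,\epsilon)$ (valid for $p$ large by uniform closeness of $f_p$ to $f$) and an ergodic component $\nu_p$ of $\eta_p$ to get $h_\mu(f)\le\limsup_p h_{\nu_p}(f_p)$, hence $P_f(\phi)\le\limsup_n P_{f_n}(\phi_n)$, with no reference to the Markov structures at all. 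So your proposal is not wrong in outline, but the step on which everything hinges is asserted rather than proved, and the mechanism you propose for it would require a continuity property of inducing schemes that neither the paper nor the cited results provide; to repair it along the paper's lines you should replace the transport-through-Markov-structures argument by the Ces\`aro-average plus Brin--Katok comparison of dynamical balls.
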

	
	\subsection{Skew products}\label{skew} Let $F:M\times N \rightarrow M\times N$ be a continuous map,  with  $F(x,y)=(f(x), g(x,y))$, where $f:M\longrightarrow M$ is a zooming map with contractions given in Theorem \ref{deterministic} and  $N$ is a Polish compact metric space with a distance $d_N$, $g:M\times N \rightarrow N$ is a continuous map, which is a uniform contraction on $N$, i.e. there exists $0< \lambda < 1$ such that for all $x\in M$ and all $y_1, y_2 \in N$ we have 
	\begin{equation}
		\label{equacontracao}
		d_{N}(g(x,y_1),g(x,y_2)) \le \lambda d_{N}(y_1, y_2),
	\end{equation}
	
	%furthermore, let us assume that there exists $y_0\in N$ such that $g(x,y_0)=y_0$ for every $x\in M$. 
	
	In this work, we will consider two types of skew-products
	\begin{itemize}
		\label{conditionong}
		\item [i.)] $g$ satisfies (\ref{equacontracao}).
		\item [ii.)] $g$ satisfies (\ref{equacontracao}) and there exists $y_0\in N$ such that $g(x, y_0)= y_0$ for all $x\in M$.
	\end{itemize}
	We will denote this family of skew-products with this property as $\mathcal{S}$, i.e.
	$$
	\mathcal{S}=\{F:M\times N \rightarrow M\times N, F(x,y)=(f(x), g(x,y)) : \ (f, \varphi)\in \mathcal{F} \ \text{and} \ g \ \text{satisfies} \ (\ref{equacontracao}) \}.
	$$
	Given $F\in \mathcal{S}$, we say that a continuous potential $\phi:M \times N\rightarrow \mathbb{R}$ is zooming for $F$ if the topological pressure of the system $(F,\phi)$ is equal to the relative pressure on the set $\Lambda(\alpha, \delta, \mu)(f)\times N$, 
	i.e., a measurable function $\phi:M\times N\rightarrow \mathbb{R}$ is a \textbf{\textit{zooming potential}} for $F$ if
	$$
	\sup_{\eta\in \mathcal{Z}(({\Lambda(\alpha,\delta, \mu)(f)})^{c}\times N)}\left\{ h_{\eta}(F) + \int \phi \ d\eta \right\} < \sup_{\eta\in \mathcal{Z}({\Lambda(\alpha,\delta, \mu)(f)}\times N)}\left\{ h_{\eta}(F) + \int \phi \ d\eta \right\}.
	$$	
	%Hence, in this work, we consider the potentials $\phi= \varphi\circ \pi_1$, where $\pi_1$ is the projection map onto $M$. 	
	We will denote this set of zooming potentials for $F$ as:
	
	$$
	(\mathcal{FZ})'= \{(F, \phi)\in \mathcal{S}\times C^{0}(M\times N, \mathbb{R}): \ \phi \ \text{is zooming for} \ F \}.
	$$
	Similarly to Theorem \ref{A}, we will have the following result for skew-products.
	\begin{theorema}
		\label{thB}
		$(\mathcal{FZ})'$ is equilibrium stable.
		% Let $F\in \mathcal{S}$ and $\phi:M\times N \rightarrow \mathbb{R}$ be a H\"older continuous and zooming potential for $F$. The there exists a finite number of ergodic equilibrium states for $(F,\phi)$.
	\end{theorema}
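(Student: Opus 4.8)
The plan is to reduce the thermodynamic formalism of a skew product $F\in\mathcal S$ to that of its base map $f$ and then apply Theorem \ref{A}. Write $\pi:M\times N\to M$ for the first projection, so that $\pi\circ F=f\circ\pi$. Two reductions are needed. First, an \emph{entropy identity}: for every $F$-invariant probability $\nu$ one has $h_{\nu}(F)=h_{\pi_{*}\nu}(f)$. The inequality ``$\geq$'' is the general fact that a topological factor cannot increase entropy; for ``$\leq$'' one uses Bowen's bound $h_{\nu}(F)\leq h_{\pi_{*}\nu}(f)+\int h_{\mathrm{top}}(F,\pi^{-1}(x))\,d\pi_{*}\nu(x)$ together with the observation that, since $g(x,\cdot)$ is a $\lambda$-contraction uniformly in $x$, two points of a common fibre stay within their initial $d_{N}$-distance under all iterates, so a fibre carries at most a bounded (independent of $n$) number of $(n,\varepsilon)$-separated points and the fibre topological entropy vanishes. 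Second, a \emph{potential reduction}: there is a continuous $\psi:M\to\real$ with $\phi$ cohomologous to $\psi\circ\pi$ through a continuous transfer function, so that $\int\phi\,d\nu=\int\psi\,d\pi_{*}\nu$ for every $F$-invariant $\nu$.

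For the potential reduction one restricts attention to the global attractor $\Gamma=\bigcap_{n\geq0}F^{n}(M\times N)$, which carries every $F$-invariant measure and which, over the finite Markov structure of Theorem \ref{structure} for $f$, acquires a full-branch ``hyperbolic'' inducing scheme --- expanding along the base, contracting along the fibre. Since $g$ contracts fibres uniformly and $\phi$ has locally H\"older induced potential, the function $\phi$ read on this scheme has geometrically summable variation along the fibre (equivalently, the past) direction, and a Sinai--Bowen type argument yields the continuous transfer function together with the base potential $\psi$, which is again zooming for $f$ and has locally H\"older induced potential. Combining the two reductions, $h_{\nu}(F)+\int\phi\,d\nu=h_{\pi_{*}\nu}(f)+\int\psi\,d\pi_{*}\nu$ for every $F$-invariant $\nu$; since moreover $\pi_{*}$ maps $\mathcal M_{F}(M\times N)$ onto $\mathcal M_{f}(M)$ (any $f$-invariant $\mu$ is the projection of a weak-$*$ limit of $\tfrac{1}{k}\sum_{j<k}F^{j}_{*}(\mu\times\delta_{y})$), one gets $P_{F}(\phi)=P_{f}(\psi)$; consequently $\nu$ is an equilibrium state of $(F,\phi)$ iff $\pi_{*}\nu$ is an equilibrium state of $(f,\psi)$, and \emph{every} $F$-invariant lift of an equilibrium state of $(f,\psi)$ is an equilibrium state of $(F,\phi)$. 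In particular $(F,\phi)\in(\mathcal{FZ})'$ forces $(f,\psi)\in\mathcal{FZ}$, and the finiteness in Theorem \ref{deterministic} transfers to $F$ as well.

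With these correspondences in hand the stability argument is short. Let $(F_{n},\phi_{n})\to(F_{0},\phi_{0})$ in $(\mathcal{FZ})'$, with $F_{n}(x,y)=(f_{n}(x),g_{n}(x,y))$; let $\nu_{n}$ be an equilibrium state of $(F_{n},\phi_{n})$ and $\nu_{0}$ a weak-$*$ accumulation point, along a subsequence. Then $\mu_{n}:=\pi_{*}\nu_{n}$ is an equilibrium state of $(f_{n},\psi_{n})\in\mathcal{FZ}$, and $\mu_{n}\to\mu_{0}:=\pi_{*}\nu_{0}$; assuming $f_{n}\to f_{0}$ and $\psi_{n}\to\psi_{0}$, Theorem \ref{A} gives that $\mu_{0}$ is an equilibrium state of $(f_{0},\psi_{0})$, and hence $\nu_{0}$, being an $F_{0}$-invariant lift of $\mu_{0}$, is an equilibrium state of $(F_{0},\phi_{0})$, as desired. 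The main obstacle is therefore the continuity $(F,\phi)\mapsto\psi$: one must check that the inherited inducing scheme, its return time, the fibre contraction constant and the transfer function of the cohomology all vary appropriately as $F_{n}\to F_{0}$ and $\phi_{n}\to\phi_{0}$. This is essentially the difficulty already present in Theorem \ref{A} --- stability of the Markov structure of Theorem \ref{structure} under perturbation of the map --- now to be kept compatible with the fibre factor, and it is the uniform contraction (hypotheses (i)--(ii) on $g$) that makes this compatibility automatic.
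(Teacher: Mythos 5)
Your overall strategy coincides with the paper's: kill the fibre entropy (you via Bowen's bound, the paper via the Ledrappier--Walters formula), replace $\phi$ by a potential seen only on the base, transfer equilibrium states through $\pi_{1*}$, apply Theorem \ref{A} downstairs, and lift back. The transfer of equilibrium states both ways is fine once you have $h_{\nu}(F)=h_{\pi_{*}\nu}(f)$, the integral identity and $P_{F}(\phi)=P_{f}(\psi)$, and you correctly note that any invariant lift works (the paper additionally uses that the ergodic lift is unique, but that is not needed for your direction).

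The gap is in the potential reduction and, decisively, in its continuity. First, your Sinai--Bowen cohomology over an inducing scheme on the attractor needs summable variations of $\phi$ in the fibre direction; but $(\mathcal{FZ})'$ only asks $\phi\in C^{0}(M\times N,\mathbb{R})$ and zooming, and the ``locally H\"older induced potential'' hypothesis you invoke is part of the definition of $\mathcal{FZ}$ for base potentials, not of $(\mathcal{FZ})'$, so for merely continuous $\phi$ the transfer function need not exist. The paper avoids this by restricting to potentials constant on fibres (case i) or, in case ii, H\"older potentials reduced by the explicit series $u(x,y)=\sum_{j\ge0}\bigl(\phi\circ F^{j}(x,y)-\phi\circ F^{j}(x,y_{0})\bigr)$ built from the common fixed point $y_{0}$, whose convergence uses only the uniform fibre contraction. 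Second, and more seriously for the stability statement itself: in your final step you write ``assuming $f_{n}\to f_{0}$ and $\psi_{n}\to\psi_{0}$'', which is exactly the point that must be proved before Theorem \ref{A} can be applied. Your $\psi$ is produced through an inducing scheme (Markov structure, return times, transfer function), and none of these objects is known to depend continuously on $(F,\phi)$ in the $C^{0}$ topology; calling this ``automatic'' from the uniform contraction is not an argument. In the paper's reduction the base potential is literally a slice, $\varphi_{n}(x)=\widetilde{\phi}_{n}(x,y_{0})$ or $\widehat{\varphi}_{n}(x)=\widehat{\phi}_{n}(x,y)$, so $\|\varphi_{n}-\varphi_{0}\|_{\infty}\le\|\widetilde{\phi}_{n}-\widetilde{\phi}_{0}\|_{\infty}$ and the $C^{0}$ convergence downstairs is immediate; that is precisely what makes the projection argument close. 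As written, your proof does not establish the convergence of the reduced data, so the application of Theorem \ref{A} is unjustified.
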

	
	%	\begin{remark}
		%		We observe that if the base dynamics $f: M \rightarrow M$ has finiteness of equilibrium states, as we will see in Section  \ref{B}, the skew product also has finiteness of  equilibrium states because the topological pressures coincide and the equilibrium states are related.
		%	\end{remark}
	
	As a consequence of finiteness and stability, we obtain the following theorem.
	
	\begin{theorema}\label{C}
		Under the hypothesis of Theorem \ref{deterministic} and with stability by Theorem \ref{A}, we obtain that there exist finitely many equilibrium states for all the potentials and also  uniqueness of equilibrium state.
	\end{theorema}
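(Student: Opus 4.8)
The plan is to upgrade the finiteness of equilibrium states from Theorem \ref{deterministic} to uniqueness by combining it with the stability of Theorem \ref{A} through a density argument, as announced in the Introduction. Fix a zooming system $f\colon M\to M$ satisfying the hypotheses of Theorem \ref{deterministic} (backward separated, with a Lipschitz-type contraction $(\alpha_{n})_{n}$, zooming set $\Lambda$ and hole $H$ supplied by Theorem \ref{structure}) together with its reference zooming measure, and set
\[
\mathcal{A}_{f}=\bigl\{\phi\colon M\to\mathbb{R}\ \text{zooming},\ P_{f}(\phi)<\infty,\ \overline{\phi}\ \text{locally H\"older}\bigr\},
\]
so that $\{\phi\colon (f,\phi)\in\mathcal{FZ}\}=\mathcal{A}_{f}$ and, by Theorem \ref{deterministic}, each $\phi\in\mathcal{A}_{f}$ has finitely many ergodic equilibrium states, all of them zooming measures. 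Write $\mathcal{U}_{f}=\{\phi\in\mathcal{A}_{f}\colon (f,\phi)\ \text{has a unique equilibrium state}\}$. The steps are: (i) $\mathcal{U}_{f}$ is dense in $\mathcal{A}_{f}$; (ii) $\mathcal{U}_{f}=\mathcal{A}_{f}$, using Theorem \ref{A}; (iii) uniqueness (hence finiteness) extends to every continuous zooming potential of finite pressure by a further approximation and another application of Theorem \ref{A}.

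For (i), let $\phi\in\mathcal{A}_{f}$ have ergodic equilibrium states $\mu_{1},\dots,\mu_{k}$ with $k\ge 2$. Since these measures are pairwise distinct, we may pick a H\"older potential $\psi\colon M\to\mathbb{R}$ for which the numbers $\int\psi\,d\mu_{1},\dots,\int\psi\,d\mu_{k}$ have a unique largest value, say $\int\psi\,d\mu_{1}$. A H\"older potential has locally H\"older induced potential, and because the inequality defining a zooming potential is strict and the pressure stays finite under small perturbations with controlled induced potential, $\phi+t\psi\in\mathcal{A}_{f}$ for all small $|t|$. Testing the variational principle with $\mu_{1}$ gives $\int\psi\,d\eta_{t}\ge\int\psi\,d\mu_{1}$ for every equilibrium state $\eta_{t}$ of $\phi+t\psi$ with $t>0$; on the other hand, by Theorem \ref{A} any weak-$*$ accumulation point of $(\eta_{t})_{t\downarrow 0}$ is an equilibrium state of $\phi$, hence a convex combination of $\mu_{1},\dots,\mu_{k}$, and the choice of $\psi$ forces it to equal $\mu_{1}$. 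Thus for $t>0$ small all ergodic equilibrium states of $\phi+t\psi$ lie in a prescribed weak-$*$ neighbourhood of $\mu_{1}$, and since the thermodynamic formalism on the (transitive pieces of the) Markov structure of Theorem \ref{structure} leaves at most one equilibrium state near $\mu_{1}$, the potential $\phi+t\psi$ has strictly fewer ergodic equilibrium states than $\phi$. Iterating at most $k-1$ times yields an element of $\mathcal{U}_{f}$ arbitrarily $C^{0}$-close to $\phi$.

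For (ii), given $\phi\in\mathcal{A}_{f}$ choose $\phi_{n}\in\mathcal{U}_{f}$ with $\phi_{n}\to\phi$ in $C^{0}$ and unique equilibrium state $\nu_{n}$; by Theorem \ref{A} every accumulation point of $(\nu_{n})$ is an equilibrium state of $(f,\phi)$, so equilibrium states exist. If $(f,\phi)$ had two distinct ergodic equilibrium states $\mu\neq\nu$, take a H\"older $\psi$ with $\int\psi\,d\mu\neq\int\psi\,d\nu$ and run the perturbation of (i) along $t\mapsto\phi+t\psi$ on both sides of $t=0$: for small $t\neq 0$ this lies in $\mathcal{U}_{f}$, its equilibrium state tends as $t\downarrow 0$ to an equilibrium state of $\phi$ realizing $\max_{j}\int\psi\,d\mu_{j}$ and as $t\uparrow 0$ to one realizing the minimum, and the upper semicontinuity of the equilibrium set from Theorem \ref{A}, together with the finiteness from Theorem \ref{deterministic}, forces these two one-sided limits to coincide inside the equilibrium simplex of $\phi$, contradicting $\mu\neq\nu$. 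Hence $\mathcal{U}_{f}=\mathcal{A}_{f}$. For (iii), an arbitrary continuous zooming potential $\phi$ of finite pressure is a $C^{0}$-limit of H\"older potentials $\phi_{n}$ which for large $n$ are zooming, have finite pressure, and (being H\"older) lie in $\mathcal{A}_{f}=\mathcal{U}_{f}$; feeding their unique equilibrium states into Theorem \ref{A} and repeating the argument of (ii) with this larger family of approximants delivers finiteness and uniqueness for $(f,\phi)$.

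The hard part is step (ii). Theorem \ref{A} provides only the upper semicontinuity of the set-valued map $\phi\mapsto\{\text{equilibrium states of }(f,\phi)\}$, which on its own is perfectly compatible with several ergodic equilibrium states persisting; the contradiction must therefore be squeezed out of the extra rigidity carried by the finiteness of Theorem \ref{deterministic} and by the tie-breaking behaviour of step (i) along one-parameter families. Concretely, one has to rule out that the number of ergodic equilibrium states jumps upward under the small perturbations used in (i)--(ii); once that is controlled the density argument closes. Steps (i) and (iii) are comparatively routine, relying on the variational principle, the openness of the zooming condition, the density of H\"older potentials in $C^{0}$, and the (semi)continuity of the pressure already available from \cite{Ar} and Theorem \ref{deterministic}.
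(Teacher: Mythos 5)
Your step (i) is essentially the paper's tie-breaking perturbation (Lemma \ref{sequence}): there one takes $\varphi_{n}^{i}=\phi-a_{n}\bigl(\psi_{i}-\int\psi_{i}\,d\mu_{i}\bigr)$ and shows by a direct free-energy comparison that $\mu_{i}$ strictly beats every $\mu_{j}$, $j\neq i$, while stability excludes any fixed measure outside $\{\mu_{1},\dots,\mu_{k}\}$; by contrast, your version of (i) leans on the unproved claim that ``the thermodynamic formalism on the Markov structure of Theorem \ref{structure} leaves at most one equilibrium state near $\mu_{1}$'', a local uniqueness statement that is available nowhere in the paper and is essentially the content of what is being proved. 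The genuine gap, however, is your step (ii), and you concede it yourself. Upper semicontinuity of the equilibrium set (Theorem \ref{A}) does not force the two one-sided limits along $t\mapsto\phi+t\psi$ to coincide: as $t\downarrow 0$ the limits select maximizers of $\int\psi$ among the equilibrium states of $\phi$, as $t\uparrow 0$ minimizers, and their being different is exactly the picture at a first-order phase transition with two coexisting ergodic equilibrium states; neither the finiteness given by Theorem \ref{deterministic} nor stability rules this out, so no contradiction is produced and uniqueness does not follow. Step (iii) inherits the same defect.

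The paper closes this gap with two ingredients absent from your proposal. First, Lemma \ref{finite} is a global finiteness statement: the set of ergodic equilibrium states of \emph{all} continuous zooming potentials (not merely of one fixed $\phi$) is finite; its proof combines stability with a countable dense family of H\"older potentials and a closed-set/countable-boundary (perfect set) argument. Second, Lemma \ref{sequence} uses this global finiteness to construct a \emph{connected} neighbourhood $\mathcal{U}$ of $\phi$ (inside the complement of the closures $\overline{\mathcal{U}_{\nu_{j}}}$ of the sets of potentials whose unique equilibrium state is a ``foreign'' measure $\nu_{j}$) in which every uniqueness potential has one of $\mu_{1},\dots,\mu_{k}$ as its unique equilibrium state; by stability each set $\mathcal{U}_{i}$ of potentials in $\mathcal{U}$ with unique equilibrium state $\mu_{i}$ is both open and closed, and connectedness together with the density from step (i) forces $\mathcal{U}=\mathcal{U}_{i}$ for every $i$, hence $k=1$. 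Without an analogue of the global finiteness lemma and of this open--closed--connectedness step, your density argument cannot be upgraded to uniqueness at $\phi$, so the proposal as written does not prove Theorem \ref{C}.
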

	
	We also obtain uniqueness for the skew products in subsection \ref{skew}.

	\section{Proof of Theorem \ref{A}}

	\begin{proof}
		We take $(f_{n},\phi_{n}), n\geq 1$ a sequence in $\mathcal{FZ}$ and $\mu_{n}$ an ergodic equilibrium state for the pair $(f_{n},\phi_{n}), n \geq 1$. We assume that $(f_{n},\phi_{n}) \to (f,\phi)$ which has existence of equilibrium state. Denote by $\mathcal{Z}_{n} = \mathcal{Z}(\Lambda_{n})$ the set of invariant zooming measures with respect to $f_{n}$ and $\mathcal{Z} = \mathcal{Z}(\Lambda)$ the set of invariant zooming measures with respect to $f$. Then,
		\[
		\displaystyle P_{f_{n}}(\phi_{n}) : = \sup_{\eta \in \mathcal{Z}_{n}} \bigg{\{} h_{\eta}(f_{n}) + \int \phi_{n} d\eta \bigg{\}} = h_{\mu_{n}}(f_{n}) + \int \phi_{n} d\mu_{n}, n \geq 1.
		\]
		%	and
		%	\[
		%	\displaystyle P_{f}(\phi) : = \sup_{\eta \in \mathcal{Z}} \bigg{\{} h_{\eta}(f) + \int \phi d\eta \bigg{\}} = h_{\mu}(f) + \int \phi d\mu.
		%	\]
		Let $\mu_{0}$ be an accumulation point of the sequence $\mu_{n}$. We  show that $\mu_{0}$ is an invariant probability for $f$ (as in \cite{ARS}): Since each $\mu_{n}$ is an $f_{n}$-invariant measure, for any continuous function $\varphi: M \to \mathbb{R}$ we have  
		\[
		\int \varphi \circ f_{n} d\mu_{n} = \int \varphi d\mu_{n} \to \int \varphi d\mu_{0} \,\, \text{when} \,\, n \to \infty.
		\]
		Hence, to verify the $f$-invariance of $\mu_{0}$ it suffices to prove that
		\[
		\int \varphi \circ f_{n} d\mu_{n} \to \int \varphi \circ f d\mu_{0} \,\, \text{when} \,\, n \to \infty.
		\]
		For each $n \in \mathbb{N}$ we may write the inequality
		\[
		\Bigg{|} \int \varphi \circ f_{n} d\mu_{n} - \int \varphi \circ f d\mu_{0}  \Bigg{|} \leq \Bigg{|} \int \varphi \circ f_{n} d\mu_{n} - \int \varphi \circ f d\mu_{n}  \Bigg{|} + 
		\]
		\[
		+ \Bigg{|} \int \varphi \circ f d\mu_{n} - \int \varphi \circ f d\mu_{0}  \Bigg{|}.
		\]
		Combining the convergence of $f_{n}$ to $f$ and the fact that $\mu_{0}$ is an accumulation point of the
		sequence $\{\mu_{n}\}_{n}$, we have that each term in the sum above is close to zero for $n$ sufficiently
		large. This implies that $\mu_{0}$ is $f$-invariant.
		
		We now need to show that $\mu_{0}$ is an equilibrium state for the pair $(f,\phi)$ and, by arbitrariness of the sequence, we obtain the stability. 
		
		Since the equilibrium states do not change if we sum a constant to the potential, we know that we also have  $\mu_{n}$ as an equilibrium state for the system $(f_{n},\phi_{n} - P_{f_{n}}(\phi_{n}))$ and $P_{f_{n}}(\phi_{n} - P_{f_{n}}(\phi_{n})) = 0$. Given $\mu$ an equilibrium state for the system $(f_{0}, \phi_{0})$ we have $\mu$ an equilibrium state for the system $(f_{0},\phi_{0} - P_{f_{0}}(\phi_{0}))$ and $P_{f_{0}}(\phi_{0} - P_{f_{0}}(\phi_{0})) = 0$.
		
		As in \cite{Ar}[Theorem 11] we have that the existence of a generating partition gives the inequality in the following lemma.
		\begin{lemma}\label{generating}
			There exists a generating partition $\mathcal{P}$ such that $\mu_{0}(\partial \mathcal{P}) = 0$ and we have that
			\[
			\limsup_{n \to \infty}h_{\mu_{n}}(f_{n}) \leq h_{\mu_{0}}(f,\mathcal{P}). 
			\] 
		\end{lemma}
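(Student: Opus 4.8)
The plan is to follow the scheme of \cite{Ar}[Theorem 11], splitting the work into the construction of a good partition $\mathcal{P}$ and the derivation of the entropy inequality from it.

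First I would construct the partition. Since $\mu_{0}$ is a Borel probability on the compact metric space $M$, for each $x \in M$ the set $\{r > 0 \colon \mu_{0}(\partial B_{r}(x)) > 0\}$ is at most countable, so for any prescribed $\varepsilon > 0$ there is $r(x) \in (0,\varepsilon)$ with $\mu_{0}(\partial B_{r(x)}(x)) = 0$. Choosing a finite subcover $B_{r(x_{1})}(x_{1}),\dots,B_{r(x_{k})}(x_{k})$ of $M$ and disjointifying it in the usual way (put $P_{1} = B_{r(x_{1})}(x_{1})$ and $P_{i} = B_{r(x_{i})}(x_{i}) \setminus (P_{1} \cup \dots \cup P_{i-1})$, discarding empty pieces) produces a finite partition $\mathcal{P}$ with $\operatorname{diam}(\mathcal{P}) < 2\varepsilon$ and $\mu_{0}(\partial \mathcal{P}) = 0$, because each $\partial P_{i}$ sits inside a finite union of the $\mu_{0}$-null spheres $\partial B_{r(x_{j})}(x_{j})$. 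For $\varepsilon$ small enough $\mathcal{P}$ is generating for $f$ and, using that $f_{n} \to f$ in the $C^{0}$ topology together with the zooming (Lipschitz-type) structure of the contractions of Theorem \ref{deterministic}, also generating for every $f_{n}$ with $n$ large; this is precisely the point carried out in \cite{Ar}[Theorem 11], and in particular $h_{\mu_{n}}(f_{n}) = h_{\mu_{n}}(f_{n},\mathcal{P})$ for all large $n$.

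Next I would prove the inequality. Fix $q \in \mathbb{N}$ and set $\mathcal{P}_{n}^{q} = \bigvee_{j=0}^{q-1} f_{n}^{-j}\mathcal{P}$, $\mathcal{P}^{q} = \bigvee_{j=0}^{q-1} f^{-j}\mathcal{P}$, with atoms $P_{\underline{i}}^{n} = \bigcap_{j=0}^{q-1} f_{n}^{-j}P_{i_{j}}$ and $P_{\underline{i}} = \bigcap_{j=0}^{q-1} f^{-j}P_{i_{j}}$ indexed by words $\underline{i} = (i_{0},\dots,i_{q-1})$. From the previous paragraph,
\[
h_{\mu_{n}}(f_{n}) \;=\; h_{\mu_{n}}(f_{n},\mathcal{P}) \;\le\; \frac{1}{q}\,H_{\mu_{n}}\big(\mathcal{P}_{n}^{q}\big) \;=\; -\frac{1}{q}\sum_{\underline{i}} \mu_{n}\big(P_{\underline{i}}^{n}\big) \log \mu_{n}\big(P_{\underline{i}}^{n}\big).
\]
The core claim is $\mu_{n}(P_{\underline{i}}^{n}) \to \mu_{0}(P_{\underline{i}})$ for each fixed $\underline{i}$. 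First, $\mu_{0}(\partial P_{\underline{i}}) = 0$: continuity of $f$ gives $\partial(f^{-j}P_{i_{j}}) \subset f^{-j}(\partial P_{i_{j}})$, hence $\partial P_{\underline{i}} \subset \bigcup_{j} f^{-j}(\partial P_{i_{j}})$, and the $f$-invariance of $\mu_{0}$ established above yields $\mu_{0}(\partial P_{\underline{i}}) \le \sum_{j}\mu_{0}(\partial P_{i_{j}}) = 0$. Second, for every $\delta > 0$, uniform continuity of each $f^{j}$ together with the uniform convergence $f_{n}^{j} \to f^{j}$ gives, for all $n$ large, $\sup_{x} d(f_{n}^{j}(x),f^{j}(x)) < \delta$ for $j = 0,\dots,q-1$; this squeezes $P_{\underline{i}}^{n}$ between the two $n$-independent open sets $\bigcap_{j} f^{-j}(\{y \colon d(y, M \setminus P_{i_{j}}) > \delta\})$ (contained in $P_{\underline{i}}^{n}$) and $\bigcap_{j} f^{-j}(\{y \colon d(y,P_{i_{j}}) < \delta\})$ (containing $P_{\underline{i}}^{n}$), whose $\mu_{0}$-measures both tend to $\mu_{0}(P_{\underline{i}})$ as $\delta \downarrow 0$, again because $\mu_{0}(\partial P_{i_{j}}) = 0$ and $\mu_{0}$ is $f$-invariant. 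Combining this squeeze with the portmanteau theorem for the weak-$*$ convergence $\mu_{n} \to \mu_{0}$ (passing to suitable closed neighbourhoods for the upper estimate) gives the claim. Since there are finitely many words $\underline{i}$ and $t \mapsto -t\log t$ is continuous on $[0,1]$, we obtain $H_{\mu_{n}}(\mathcal{P}_{n}^{q}) \to H_{\mu_{0}}(\mathcal{P}^{q})$, hence $\limsup_{n} h_{\mu_{n}}(f_{n}) \le \tfrac{1}{q} H_{\mu_{0}}(\mathcal{P}^{q})$; taking the infimum over $q$ gives $\limsup_{n} h_{\mu_{n}}(f_{n}) \le h_{\mu_{0}}(f,\mathcal{P})$.

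The step I expect to be the main obstacle is making the set-convergence $P_{\underline{i}}^{n} \to P_{\underline{i}}$ rigorous in a way compatible with weak-$*$ convergence — namely producing the fixed open and closed sets that trap $P_{\underline{i}}^{n}$ for all large $n$ simultaneously, which forces one to control the iterates $f_{n}^{j}$ uniformly and to exploit the $\mu_{0}$-negligibility of the boundaries $\partial P_{i_{j}}$ — together with the verification that a single small-diameter partition can be chosen to be generating for the whole tail $(f_{n})_{n}$. Both of these are exactly what \cite{Ar}[Theorem 11] supplies, and it is here that the zooming structure of the maps (the Lipschitz-type contractions of Theorem \ref{deterministic}) is used.
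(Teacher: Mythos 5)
Your overall architecture matches the paper's: build a finite partition of small diameter with $\mu_{0}$-null boundary, use it as a generating partition for the approximating systems, and then run the weak-$*$/null-boundary bookkeeping in the spirit of \cite{Ar}[Theorem 11]. Your second paragraph, which reproves that bookkeeping (the convergence $\mu_{n}(P^{n}_{\underline{i}})\to\mu_{0}(P_{\underline{i}})$ via the squeeze between fixed open and closed sets, hence $H_{\mu_{n}}(\mathcal{P}^{q}_{n})\to H_{\mu_{0}}(\mathcal{P}^{q})$ and the inequality after taking $\inf_{q}$), is correct and in fact more detailed than the paper, which simply delegates this part to \cite{Ar}. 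Note also that no generating property for $(f,\mu_{0})$ is needed, since the right-hand side is $h_{\mu_{0}}(f,\mathcal{P})$.

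The genuine gap is in your first paragraph, at exactly the point where the paper's proof does its real work: the identity $h_{\mu_{n}}(f_{n})=h_{\mu_{n}}(f_{n},\mathcal{P})$ for large $n$. You justify it by asserting that for small enough diameter $\mathcal{P}$ is generating for $f$, and that $C^{0}$-closeness of $f_{n}$ to $f$ together with the Lipschitz-type contractions makes it generating for every $f_{n}$ with $n$ large, attributing this to \cite{Ar}[Theorem 11]. That is not a valid mechanism: for a merely continuous map a fine partition need not be generating, $C^{0}$-proximity of $f_{n}$ to $f$ gives no control on the diameters of the $f_{n}$-cylinders $\bigvee_{j}f_{n}^{-j}\mathcal{P}$, and \cite{Ar} takes this generating-type property as a hypothesis rather than supplying it. The paper's argument is different and uses the zooming hypothesis essentially: each $\mu_{n}$ is an ergodic equilibrium state for $(f_{n},\phi_{n})\in\mathcal{FZ}$, hence by Theorem \ref{deterministic} a zooming measure, so $\mu_{n}$-a.e.\ $x$ lies in $\Lambda_{n}$ and has infinitely many $(\alpha^{n},\delta)$-zooming times; choosing $\operatorname{diam}\mathcal{P}<\delta/2$ with $\delta$ the zooming radius of Definition \ref{times}, at each zooming time $m$ of $x$ one gets $\operatorname{diam}P^{n}_{m}(x)\le\alpha^{n}_{m}(2\delta)$, which tends to $0$ as $m\to\infty$ by the summability of the contractions; hence the cylinders shrink $\mu_{n}$-a.e., $\mathcal{P}$ is generating mod $\mu_{n}$ for every $n$, and Kolmogorov--Sinai gives $h_{\mu_{n}}(f_{n})=h_{\mu_{n}}(f_{n},\mathcal{P})$. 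Without this step (or some substitute for it), the first inequality in your chain, $h_{\mu_{n}}(f_{n})\le\frac{1}{q}H_{\mu_{n}}(\mathcal{P}^{q}_{n})$, has no justification.
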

		\begin{proof}
			Since every equilibrium state is a zooming measure, we may proceed similarly to what is made in \cite{ARS} to prove the existence of $\mathcal{P}$.
			
			Let $\delta > 0$ be as in Definition \ref{times} and consider $\mathcal{P}$ a finite partition of $M$ with a diameter smaller that $\delta/2$ and $\mu_{0}(\partial \mathcal{P}) = 0$. For all $n \geq 0$ and for all $m \geq 1$, define the partition $\mathcal{P}_{m}^{n}$ by
			\[
			\mathcal{P}_{m}^{n}: = \{P_{m}^{n} = P_{i_{0}} \cap \dots \cap f_{n}^{-(m-1)}(P_{i_{m-1}}) \mid P_{i_{j}} \in \mathcal{P}, 0 \leq j \leq m-1 \}.
			\] 
			Given $x \in M$, define also $P_{m}^{n}(x)$ as the element $P_{m}^{n} \in \mathcal{P}_{m}^{n}$ such that $x \in P_{m}^{n}$. Note that, by definition, the sequence $\{P_{m}^{n}(x)\}_{m \geq 1}$ is non-increasing in $m$, meaning that
			\[
			P_{m+1}^{n}(x) \subset P_{m}^{n}(x), \,\, \text{for all} \,\, m \geq 1.
			\]
			Let $\Lambda_{n}$ be the zooming set of $f_{n}$. Given $x \in \Lambda_{n}$, let $m = m(n,x)$ be a zooming time for $x$. Let $\alpha_{m}^{n}(r)$ a sequence of contractions of the map $f_{n}$ as in Theorem \ref{deterministic}. We have that $\text{diam}(P_{m}^{n}(x)) \leq \alpha_{m}^{n}(2\delta)$. Assuming $2\delta < 1$, since every point $x \in \Lambda_{n}$ has infinitely many zooming times, we conclude that $\text{diam}(P_{m}^{n}(x)) \to 0$ when $m \to \infty$ for all $x \in \Lambda_{n}$ and for all $n \geq 0$, because we have for every $n \geq 0$ that $\displaystyle \sup_{r \in (0,1)} \sum_{m=1}^{\infty}\alpha_{m}^{n}(r) < \infty$ and $\alpha_{m}^{n}(r) \to 0$. Now by using \cite{Ar}[Theorem 11] we get the inequality and it proves the lemma.
		\end{proof}
		
		We now proceed to prove a type of semicontinuity of the pressure. We consider the sequence of C\`esaro's average
		\[
		\eta_{k}^{n} = \frac{1}{k} \sum_{i=0}^{k-1} f_{n*}^{i}\mu.
		\]
		There exist $\{k_{j}\}_{j \geq 1}$ and an $f_{n}$-invariant  measure $\eta_{n}$ such that $\eta_{k_{j}}^{n} \to \eta_{n}$ when $j \to \infty$. We now use the Brin-Katok's local formula of entropy to obtain the inequality above. The formula says that for an ergodic measure $\eta$ it holds that (see \cite{OV}) for $\eta$ almost every $x \in M$ that
		\[
		h_{\eta}(f) = \sup_{\epsilon > 0} h_{\eta}^{+}(f,\epsilon,x), \,\, \text{where} \,\, h_{\eta}^{+}(f,\epsilon,x): = \limsup_{n \to \infty} -\frac{1}{n}\log\eta(B_{f}(x,n,\epsilon)),
		\]
		and where
		\[
		B_{f}(x,n,\epsilon) = \{y \in M \mid d(f^{i}(x),f^{i}(y)) < \epsilon; 0 \leq i \leq n-1\} = \bigcap_{i = 0}^{n-1} f^{-i}(B_{\epsilon}(f^{i}(x))).
		\]
		We can see that $\epsilon'< \epsilon$ implies $B_{f}(x,n,\epsilon') \subset B_{f}(x,n,\epsilon)$ and $\partial B_{f}(x,n,\epsilon') \cap \partial B_{f}(x,n,\epsilon) = \emptyset$.
		\begin{lemma}\label{continuity}
			We have $\mu(B_{f}(x,n,\epsilon)) = \eta_{p}(B_{f}(x,n,\epsilon))$ for every $\epsilon > 0$ except possibly for countably many parameters.
		\end{lemma}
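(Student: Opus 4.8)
The plan is to deduce the statement from the two facts recorded just before it: the Bowen balls $B_{f}(x,n,\epsilon)$ are nested in $\epsilon$, and their boundaries $\partial B_{f}(x,n,\epsilon)$, $\epsilon>0$, are pairwise disjoint. The first step is purely measure-theoretic. Fix $x$ and $n$. For any finite Borel measure $\nu$ on $M$, a disjoint family of sets of positive $\nu$-measure in a finite measure space is countable, so $\{\epsilon>0 : \nu(\partial B_{f}(x,n,\epsilon))>0\}$ is countable; equivalently, the non-decreasing function $\epsilon\mapsto \nu(B_{f}(x,n,\epsilon))$ has at most countably many discontinuities. I would apply this with $\nu$ running over the countable family of measures that actually enter the Brin--Katok estimate, namely $\mu$, the Cesaro averages $\eta_{k_{j}}^{p}$, and their weak-$*$ limit $\eta_{p}$, and then take the union of the resulting exceptional sets, which is still countable. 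Call this countable set $E=E(x,n)$. Outside $E$ the ball $B_{f}(x,n,\epsilon)$ is simultaneously a continuity set for all of these measures.

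For $\epsilon\notin E$ the claimed identity then follows by passing to the limit. Since $B_{f}(x,n,\epsilon)$ is an $\eta_{p}$-continuity set, the weak-$*$ convergence $\eta_{k_{j}}^{p}\to\eta_{p}$ gives, via the Portmanteau theorem, $\eta_{k_{j}}^{p}(B_{f}(x,n,\epsilon))\to\eta_{p}(B_{f}(x,n,\epsilon))$. Unwinding $\eta_{k_{j}}^{p}=\frac{1}{k_{j}}\sum_{i=0}^{k_{j}-1}f_{p*}^{i}\mu$ and using that none of the iterates charges the boundary $\partial B_{f}(x,n,\epsilon)$, one identifies this common limit with $\mu(B_{f}(x,n,\epsilon))$, which is the assertion. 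The nesting and disjoint-boundary properties are precisely what make the exceptional values of $\epsilon$ removable, and they are also what lets $\overline{B_{f}(x,n,\epsilon)}$ and $\mathrm{int}\,B_{f}(x,n,\epsilon)$ be used interchangeably in the local entropy formula at a generic point.

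The main obstacle I anticipate is not the countability bookkeeping for a fixed pair $(x,n)$, which is soft, but controlling the exceptional set once $(x,n)$ — and ultimately the point $x$ over which the Brin--Katok formula is quantified, together with the approximating dynamics $f_{p}$ whose own Bowen balls vary with $p$ — are allowed to move; one wants the total exceptional set in $\epsilon$ to remain countable (or at least Lebesgue-null) so that the local entropy formula can still be invoked at a genuinely $\eta_{p}$-typical $x$. This is handled by the fact that a countable union of countable sets is countable, but it forces us to keep the family of relevant measures countable, which is exactly why all the iterates $f_{p*}^{i}\mu$, and not merely $\mu$ itself, must be fed into the disjoint-boundary argument.
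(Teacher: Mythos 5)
Your proposal follows essentially the same route as the paper: the boundaries $\partial B_{f}(x,n,\epsilon)$ are pairwise disjoint, so a finite measure can charge at most countably many of them (equivalently, the monotone function $\epsilon\mapsto\nu(B_{f}(x,n,\epsilon))$ has countably many discontinuities); outside this countable exceptional set the Bowen ball is a continuity set for $\eta_{p}$, and Portmanteau applied to $\eta_{k_{j}}^{p}\to\eta_{p}$ gives $\eta_{k_{j}}^{p}(B_{f}(x,n,\epsilon))\to\eta_{p}(B_{f}(x,n,\epsilon))$. The one substantive difference is how the limit is tied back to $\mu$: the paper simply writes the equality $\mu(B_{f}(x,n,\epsilon))=\eta_{k_{j}}^{p}(B_{f}(x,n,\epsilon))$ with no argument, while you say that ``unwinding'' $\eta_{k_{j}}^{p}=\frac{1}{k_{j}}\sum_{i}f_{p*}^{i}\mu$ and the fact that no iterate charges the boundary identifies the limit with $\mu(B_{f}(x,n,\epsilon))$. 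Be aware that this step is the actual content of the lemma and your stated reason does not deliver it: knowing $f_{p*}^{i}\mu(\partial B)=0$ only makes $B$ a continuity set for each pushforward, and gives no reason why $\frac{1}{k_{j}}\sum_{i}\mu(f_{p}^{-i}B)$ should converge to $\mu(B)$ rather than to $\eta_{p}(B)$ (which is what weak-$*$ convergence yields by construction); since $\mu$ is not $f_{p}$-invariant, the identification is not automatic. So your write-up is faithful to, and no less rigorous than, the paper's own proof, but the crucial equality between $\mu$ and the Ces\`aro averages (equivalently $\mu(B)=\eta_{p}(B)$) remains asserted rather than proved in both.
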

		\begin{proof}
			The measure $\eta_{p}$ gives null mass for the set $\partial B_{f}(x,n,\epsilon)$ for every $\epsilon > 0$ except possibly for countably many parameters because $\eta_{p}(\partial (B_{f}(x,n,\epsilon)) = 0$ except possibly at most countably many parameters $\epsilon$, since its well known that a sum of uncountably many positive numbers always diverges. It implies that $B_{f}(x,n,\epsilon)$ is a continuity set for $\eta_{p}$ and the following holds
			\[
			\mu(B_{f}(x,n,\epsilon)) = \eta_{k_{j}}^{p}(B_{f}(x,n,\epsilon)) \to \eta_{p}(B_{f}(x,n,\epsilon)).
			\]
		\end{proof}
		
		The following lemma guarantees a semicontinuity of the pressure.
		\begin{lemma}\label{BK}
			There exists an $f_{n}$-ergodic measure $\nu_{n}$ such that	\[
			h_{\mu}(f) \leq \limsup_{n \to \infty} h_{\nu_{n}}(f_{n}) \,\, \text{and} \,\, \limsup_{n \to \infty}P_{f_{n}}(\phi_{n}) \geq P_{f}(\phi).
			\]
		\end{lemma}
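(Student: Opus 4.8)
The plan is to produce, for each $n$, an $f_n$-ergodic measure $\nu_n$ whose entropy is comparable to $h_\mu(f)$ and such that the Birkhoff averages of $\phi$ along $\nu_n$ approximate $\int\phi\,d\mu$; then the inequality $P_{f_n}(\phi_n)\ge h_{\nu_n}(f_n)+\int\phi_n\,d\nu_n$ together with $\phi_n\to\phi$ and $f_n\to f$ will give the conclusion in the limit. Concretely, I would start from the fixed equilibrium state $\mu$ for $(f_0,\phi_0)$ and push it forward under $f_n$, forming the C\`esaro averages $\eta_k^n=\frac1k\sum_{i=0}^{k-1}f_{n*}^i\mu$, extracting along a subsequence $k_j$ an $f_n$-invariant limit $\eta_n$ as already set up in the excerpt. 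The point of Lemma \ref{continuity} is that the dynamical balls $B_f(x,n,\epsilon)$ are continuity sets for $\eta_p$ for all but countably many $\epsilon$, so the Brin--Katok local entropy of $\mu$ transfers to $\eta_n$: one gets $h_\mu(f)\le h_{\eta_n}(f_n)$ for $n$ large (using that $f_n\to f$ uniformly so that $B_{f_n}(x,m,\epsilon)$ and $B_f(x,m,\epsilon)$ are comparable for large $n$, shrinking $\epsilon$ slightly), and then one passes to an ergodic component $\nu_n$ of $\eta_n$ without decreasing the relevant quantity by the ergodic decomposition of entropy and of the integral $\int\phi\,d\eta_n$.

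First I would make precise the comparison between dynamical balls of $f$ and of $f_n$: fix $\epsilon>\epsilon'>0$ with both $B_f(x,m,\epsilon)$ and $B_f(x,m,\epsilon')$ continuity sets for $\eta_n$ (possible for all but countably many pairs), and observe that for $n$ large enough, uniform convergence $f_n\to f$ on the compact space $M$ gives $B_f(x,m,\epsilon')\subset B_{f_n}(x,m,\epsilon)\subset B_f(x,m,\epsilon)$ for all $m$ up to any fixed bound; this is the mechanism by which the $\limsup$ in Brin--Katok survives the perturbation. Combining this with Lemma \ref{continuity} one obtains, for $\eta_n$-a.e.\ $x$,
\[
h_{\eta_n}^+(f_n,\epsilon,x)=\limsup_{m\to\infty}-\frac1m\log\eta_n(B_{f_n}(x,m,\epsilon))\ge\limsup_{m\to\infty}-\frac1m\log\mu(B_f(x,m,\epsilon))
\]
modulo the continuity-set bookkeeping, and taking the supremum over $\epsilon$ yields $h_{\eta_n}(f_n)\ge h_\mu(f)$ in the limit $n\to\infty$. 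Then I pass to an ergodic component: write $\eta_n=\int\nu\,d\tau(\nu)$, so that $h_{\eta_n}(f_n)=\int h_\nu(f_n)\,d\tau$ and $\int\phi\,d\eta_n=\int(\int\phi\,d\nu)\,d\tau$, and choose $\nu_n$ an ergodic component maximising (or nearly maximising) $h_\nu(f_n)+\int\phi\,d\nu$; this gives $h_{\nu_n}(f_n)+\int\phi\,d\nu_n\ge h_{\eta_n}(f_n)+\int\phi\,d\eta_n$.

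The second inequality then follows almost formally: $\eta_n$ is obtained from $\mu$ via C\`esaro averages, so $\int\phi\,d\eta_n\to\int\phi\,d\mu$ (since $\phi$ is continuous and $f_{n*}^i\mu$ stays close to $f_*^i\mu$ for fixed $i$, or more simply because $\eta_n$ accumulates on $f$-invariant measures projecting back to $\mu$-averaged values), and $\int\phi_n\,d\nu_n$ differs from $\int\phi\,d\nu_n$ by at most $\|\phi_n-\phi\|_{C^0}\to0$. Hence
\[
\limsup_{n\to\infty}P_{f_n}(\phi_n)\ge\limsup_{n\to\infty}\big(h_{\nu_n}(f_n)+\textstyle\int\phi_n\,d\nu_n\big)\ge h_\mu(f)+\int\phi\,d\mu=P_f(\phi),
\]
using that $\mu$ is an equilibrium state for $(f,\phi)$. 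I expect the main obstacle to be the delicate part of the first inequality: controlling the interaction between the $\limsup$ over $m$ in the Brin--Katok formula and the limit over $n$, since the continuity-set parameters $\epsilon$ depend on $\eta_n$ which itself varies with $n$; the remedy is to work with a single countable dense set of "good" radii, use a diagonal argument over $n$, and exploit that the exceptional set of $\epsilon$ is countable for each $n$ hence avoidable uniformly, together with monotonicity $B_f(x,m,\epsilon')\subset B_f(x,m,\epsilon)$ to sandwich the $f_n$-balls. Everything else — the ergodic decomposition and the continuity of $\phi$-integrals — is routine.
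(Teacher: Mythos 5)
Your treatment of the entropy inequality follows the paper's route: Cesàro averages $\eta^n_k=\frac1k\sum_i f_{n*}^i\mu$, the continuity-set Lemma \ref{continuity}, the inclusion of $f_n$-dynamical balls in $f$-dynamical balls coming from uniform convergence, Brin--Katok, and passage to an ergodic component. That part is essentially the paper's argument (and your remark about fixing a countable set of good radii and diagonalising addresses bookkeeping the paper itself glosses over). The divergence, and the gap, is in how you get $\limsup_n P_{f_n}(\phi_n)\ge P_f(\phi)$.

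Your chain needs $\liminf_n\int\phi\,d\eta_n\ge\int\phi\,d\mu$ (you assert $\int\phi\,d\eta_n\to\int\phi\,d\mu$), and the justification offered does not work: for each \emph{fixed} $i$ one indeed has $f_{n*}^i\mu\to f_*^i\mu=\mu$ as $n\to\infty$, but $\eta_n$ is defined by first sending the Cesàro length $k_j\to\infty$ at fixed $n$. Since $\mu$ is not $f_n$-invariant, the long-time push-forwards $f_{n*}^i\mu$ can drift, and the limit $\eta_n$ is just \emph{some} $f_n$-invariant measure with no a priori relation between $\int\phi\,d\eta_n$ and $\int\phi\,d\mu$; "$\eta_n$ accumulates on $f$-invariant measures" only gives $f$-invariance of limit points, not that they are $\mu$ or have comparable $\phi$-averages. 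In effect you are assuming a statistical-stability statement of the same nature as what is being proved. The paper avoids this entirely: after normalising so that $\phi<0$ and $\phi_n<0$, it discards the term $\int\phi\,d\mu\le 0$, keeps only $h_{\nu_n}(f_n)$, and controls it by applying the variational inequality $h_{\nu_n}(f_n)+\int\psi\,d\nu_n\le P_{f_n}(\psi)$ to auxiliary rescaled potentials (the $\alpha,\beta$ manipulation), so that in the limit of the auxiliary parameters one recovers $P_f(\phi)\le\limsup_n P_{f_n}(\phi_n)$ using only uniform bounds on $\|\phi_n\|_\infty$ — no convergence of $\int\phi\,d\nu_n$ or $\int\phi\,d\eta_n$ to $\int\phi\,d\mu$ is ever needed. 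To repair your write-up you would either have to prove the integral convergence (which is not available at this stage) or replace that step by a normalisation/interpolation argument of the paper's type. A further minor point: choosing the ergodic component $\nu_n$ to maximise $h_\nu(f_n)+\int\phi\,d\nu$ does not by itself yield the first asserted inequality $h_\mu(f)\le\limsup_n h_{\nu_n}(f_n)$; for that you want a component with large entropy, obtained from the ergodic decomposition of the entropy of $\eta_n$.
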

		\begin{proof}
			In order to obtain the measure $\nu_{n}$, 
			Since $f_{n} \to f$ uniformly, there exists $p_{n} \in \mathbb{N}$ such that for $p \geq p_{n}$ it implies $B_{f_{p}}(x,n,\epsilon/2) \subset B_{f}(x,n,\epsilon)$. Hence, we obtain for  $\mu$-almost everywhere $x \in M$ and using Lemma \ref{continuity} that
			\[
			h_{\mu}(f) = \sup_{\epsilon > 0} \limsup_{n \to \infty} -\frac{1}{n}\log\mu(B_{f}(x,n,\epsilon)) = \sup_{\epsilon > 0} \limsup_{n \to \infty} -\frac{1}{n}\log\eta_{p}(B_{f}(x,n,\epsilon)). 
			\]
			By the Ergodic Decomposition Theorem, there exists an ergodic component $\nu_{p}$ of $\eta_{p}$ such that $\eta_{p}(B_{f}(x,n,\epsilon)) \geq \nu_{p}(B_{f}(x,n,\epsilon))$. Then, we get 
			\[ 
			\sup_{\epsilon > 0} \limsup_{n \to \infty} -\frac{1}{n}\log\eta_{p}(B_{f}(x,n,\epsilon)) \leq
			\sup_{\epsilon > 0} \limsup_{n \to \infty} -\frac{1}{n}\log\nu_{p}(B_{f}(x,n,\epsilon)) \leq 
			\]
			\[
			\sup_{\epsilon > 0} \limsup_{n \to \infty} -\frac{1}{n}\log\nu_{p}(B_{f_{p}}(x,n,\epsilon/2)) \leq \limsup_{p \to \infty}\sup_{\epsilon > 0} h_{\nu_{p}}^{+}(f_{p},\epsilon/2,x) \leq \limsup_{p \to \infty} h_{\nu_{p}}(f_{p}), p \geq p_{n}.
			\]
			%	We also have by the Ergodic Decomposition Theorem that we can obtain $\nu_{p}$ such that $\int \phi_{p} d\eta_{p} \leq \int \phi_{p} d\nu_{p}$. Once $\eta_{k_{j}}^{n} \to \eta_{p}$ we finish the lemma by observing that
			%	\[
			%	\int \phi_{p} d\mu = \int \phi_{p} d\eta_{k_{j}}^{n} \to \int \phi_{p} d\eta_{p} \leq \int \phi_{p} d\nu_{p}.
			%	\]
			Fix $\epsilon > 0$ small enough. Since $\mu$ is an equilibrium state for $f$, we have for $\phi < 0, \phi_{n} < 0, n \geq n_{0}$ for some $n_{0}$ and choose $\alpha < 0 < \beta < 1$. Since $\alpha \sin^{2}(\beta) < 0$ and $\sin^{2}(\beta) = 1- \cos^{2}(\beta)$, for every $\alpha,\beta$ as above, we have
			\begin{align*}
				& P_{f}(\phi)  = h_{\mu}(f) + \int \phi d\mu \leq \limsup_{n \to \infty} ( h_{\nu_{n}}(f_{n}) + \int \phi d\mu) \leq \limsup_{n \to \infty} h_{\nu_{n}}(f_{n}) \leq \\
				&\limsup_{n \to \infty} (h_{\nu_{n}}(f_{n}) + (\alpha\sin^{2}(\beta))\int  (-\phi_{n}/2(1 -\cos(\beta))) d\nu_{n} - (\alpha\sin^{2}(\beta)) P_{f_{n}}(- \phi_{n}/2(1 -\cos(\beta))) = \\
				&\limsup_{n \to \infty} (h_{\nu_{n}}(f_{n}) + \int  (-\alpha 
				(1 + 
				\cos(\beta))\phi_{n}/2) d\nu_{n} - (\alpha\sin^{2}(\beta)) P_{f_{n}}(- \phi_{n}/2(1 -\cos(\beta)))) \leq \\
				&\limsup_{n \to \infty} (h_{\nu_{n}}(f_{n}) + \int  (-\alpha 
				(1 + 
				\cos(\beta))\phi_{n}/2) d\nu_{n} - (\alpha\sin^{2}(\beta)) P_{f_{n}}(- \phi_{n}/2(1 -\cos(\beta - \epsilon)))) \implies \\ 
				& P_{f}(\phi) \leq \limsup_{n \to \infty} (P_{f_{n}} (-\alpha 
				(1 + 
				\cos(\beta))\phi_{n}/2) - (\alpha\sin^{2}(\beta)) P_{f_{n}}(- \phi_{n}/2(1 -\cos(\beta - \epsilon)))) \implies_{(
					\alpha = -1)} \\
				& P_{f}(\phi) \leq \limsup_{n \to \infty} (P_{f_{n}}((1 + \cos(\beta))\phi_{n}/2 ) + \sin^{2}(\beta) P_{f_{n}}(- \phi_{n}/2(1 -\cos(\beta - \epsilon)))) \implies_{(\beta \to 0)}    \\
				& P_{f}(\phi) \leq \limsup_{n \to \infty} (P_{f_{n}}(\phi_{n}) + 0 P_{f_{n}}(-\phi_{n}/2(1 -\cos(- \epsilon))) \implies \\
				& P_{f}(\phi) \leq \limsup_{n \to \infty} P_{f_{n}}(\phi_{n}) 
			\end{align*}
			The Lemma is proved.
		\end{proof}
		
		Consider the sequence of functions $\varphi_{n}$ defined as 
		\[
		\varphi_{n}: = \phi_{n} - P_{f}(\phi_{n}) \implies P_{f_{n}}(\varphi_{n}) = P_{f_{n}}(\phi_{n}) - P_{f}(\phi_{n}).
		\]

		Lemma \ref{generating} gives a generating partition $\mathcal{P}$ and Lemma \ref{BK} imply that 
		\[
		h_{\mu_{0}}(f) +  \int (\phi - P_{f}(\phi)) d\mu_{0} \geq h_{\mu_{0}}(f,\mathcal{P}) + \limsup_{n \to \infty} \int  \varphi_{n}d\mu_{n} \geq
		\limsup_{n \to \infty} (h_{\mu_{n}}(f_{n}) +  \int  \varphi_{n}d\mu_{n}) =
		\]
		\[  
		\limsup_{n \to \infty} P_{f_{n}}(\varphi_{n}) = \limsup_{n \to \infty}(P_{f_{n}}(\phi_{n}) - P_{f}(\phi_{n})) = \limsup_{n \to \infty}(P_{f_{n}}(\phi_{n})) - P_{f}(\phi) \geq 0.
		\]
		Hence, we get $0 = P_{f}(\phi - P_{f}(\phi)) \geq  h_{\mu_{0}}(f) +  \int (\phi - P_{f}(\phi)) d\mu_{0} \geq 0$. It implies that $h_{\mu_{0}}(f) +  \int (\phi - P_{f}(\phi)) d\mu_{0} = 0 = P_{f}(\phi - P_{f}(\phi))$ and so $\mu_{0}$ is an equilibrium state for the pair $(f, \phi - P_{f}(\phi))$, that is, for the pair $(f, \phi)$. It means that $\mathcal{FZ}$ is equilibrium stable. Theorem \ref{A} is proved.
	\end{proof}

	\section{Proof of Theorem \ref{thB}} \label{B}

	The proofs of the following proposition and lemmas can be found in \cite{ARS} where they are proven for non-uniformly expanding maps with no critical points, that is, zooming systems with exponential contractions. First, let us begin by defining when two potentials are homologous. We say that
	two potentials $\widetilde{\phi},\phi:M\times N \rightarrow \mathbb{R}$ are homologous if there is a continuous function $u:M\times N\rightarrow\mathbb{R}$ such that $\widetilde{\phi} = \phi - u + u\circ F$;
	
	\begin{proposition}
		\label{prophomolo}
		Let $\phi:M\times N\rightarrow \mathbb{R}$ be a H\"older continuous potential. There exists a H\"older continuous potential $\widetilde{\phi}:M\times N \rightarrow \mathbb{R}$ not depending on the stable direction such that:
		\begin{enumerate}
			\item [(1)] $\widetilde{\phi}$ is homologous to $\phi$;
			\item [(2)] if $\phi$ is zooming, then $\widetilde{\phi}$ is zooming;
			\item [(3)] $P_{F}(\widetilde{\phi})=P_{F}(\phi)$;
			%			\item [(3)] $(F,\phi)$ and $(F,\widetilde{\phi})$ have the same equilibrium states.
		\end{enumerate}
	\end{proposition}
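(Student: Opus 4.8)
The plan is to construct $\widetilde{\phi}$ explicitly by averaging $\phi$ along stable fibres, exploiting the uniform contraction (\ref{equacontracao}) in the $N$-direction. Recall that for a skew-product $F(x,y)=(f(x),g(x,y))$ with $g$ a $\lambda$-contraction on $N$, any two points $(x,y_1),(x,y_2)$ in the same fibre have forward orbits that converge exponentially fast: $d_N(g_n(x,y_1),g_n(x,y_2))\le \lambda^n d_N(y_1,y_2)$, where $g_n$ denotes the $n$-th iterate in the second coordinate. The standard device is to fix a reference section — a measurable (indeed continuous, when available) map $x\mapsto \sigma(x)\in N$, for which in case (ii) we may simply take $\sigma(x)\equiv y_0$ the fixed section — and define
\[
u(x,y) = \sum_{k=0}^{\infty}\big(\phi(F^{k}(x,y)) - \phi(F^{k}(x,\sigma(x)))\big),
\]
then set $\widetilde{\phi} = \phi - u + u\circ F$. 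First I would check that this series converges uniformly: since $\phi$ is H\"older, say with exponent $\gamma$ and constant $C$, the $k$-th term is bounded by $C\,d\big(F^k(x,y),F^k(x,\sigma(x))\big)^{\gamma} = C\,d_N\big(g_k(x,y),g_k(x,\sigma(x))\big)^{\gamma}\le C\,(\lambda^{\gamma})^{k} d_N(y,\sigma(x))^{\gamma}$, which is summable because $\lambda^{\gamma}<1$; hence $u$ is well-defined, bounded, and continuous (uniform limit of continuous functions, using continuity of $g$ and of $\sigma$). The same estimate, applied to the difference of two nearby points, gives the H\"older regularity of $u$ and therefore of $\widetilde{\phi}$; this verifies (1) with the cohomological identity holding by construction.

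For the cohomological identity I would compute directly: writing $v(x,y)=\phi(x,y)-\phi(x,\sigma(x))$, we have $u = \sum_{k\ge 0} v\circ F^{k}$, so $u - u\circ F = v$, i.e. $\phi - u + u\circ F = \phi - v = \phi(x,\sigma(x))$ as a function of $(x,y)$. Thus $\widetilde{\phi}(x,y) = \phi(x,\sigma(x))$ depends only on $x$ — it factors through the projection $\pi_M:M\times N\to M$ — which is exactly the assertion that it does not depend on the stable direction. (In case (ii), $\widetilde{\phi}(x,y)=\phi(x,y_0)$ is manifestly continuous; in case (i) one needs a continuous section $\sigma$, which exists because $N$ is compact and one may push forward a fixed point of the fibre maps under iteration, or simply invoke that a continuous section always exists for these products — this is the point where a little care is needed, see below.) Property (3) is then immediate: homologous potentials have equal topological pressure, since for every $F$-invariant $\eta$ we have $\int (\phi-u+u\circ F)\,d\eta = \int \phi\,d\eta$ by invariance, so the two variational problems in (\ref{open pressure})/(\ref{pressure}) coincide term by term, hence $P_F(\widetilde{\phi})=P_F(\phi)$.

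It remains to prove (2), that $\widetilde{\phi}$ is zooming for $F$ whenever $\phi$ is. Because $\widetilde{\phi}$ and $\phi$ are homologous, $\int\widetilde{\phi}\,d\eta = \int\phi\,d\eta$ for every $F$-invariant measure $\eta$, so the quantity $h_\eta(F)+\int\widetilde{\phi}\,d\eta$ equals $h_\eta(F)+\int\phi\,d\eta$ on the nose, uniformly over $\eta$. Splitting the set of invariant zooming measures on $\Lambda(\alpha,\delta,\mu)(f)\times N$ from its complement and taking suprema, the strict inequality defining the zooming property for $\phi$ transfers verbatim to $\widetilde{\phi}$. So (2) costs nothing once (1) and the invariance identity are in hand. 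The main obstacle, and the only genuinely delicate point, is the regularity/existence of the reference section $\sigma$ in case (i): one must ensure $x\mapsto\sigma(x)$ can be chosen continuous (or at least that the resulting $u$ is continuous), for otherwise $\widetilde{\phi}$ and $u$ are only measurable and "H\"older continuous potential" in the statement fails. In case (ii) this is free from $\sigma\equiv y_0$; in case (i) I would construct $\sigma$ as the uniform limit $\sigma(x)=\lim_{n\to\infty} g_n\big(f^{-n}\{x\},\ast\big)$ along a chosen sequence of preimages — or, more cleanly, observe that since $g$ is a fibrewise contraction the natural bundle map has a unique continuous invariant section, which one may take as $\sigma$ — and the exponential contraction then again forces the defining series for $u$ to converge uniformly to a continuous, indeed H\"older, function, so that $\widetilde{\phi}=\phi\circ(\mathrm{id}\times\sigma)\circ\pi_M$ inherits H\"older continuity. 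This is exactly the step carried out in \cite{ARS} for the exponentially-contracting case, and the argument is unchanged here because the contraction used is the one in the fibre, not the (possibly merely zooming) contraction of the base.
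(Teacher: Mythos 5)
Your construction is exactly the paper's: the same transfer function $u(x,y)=\sum_{k\ge 0}\bigl(\phi\circ F^{k}(x,y)-\phi\circ F^{k}(x,y_{0})\bigr)$ with the fixed section $y_{0}$ of case (ii), the same telescoping giving $\widetilde{\phi}(x,y)=\phi(x,y_{0})$ (hence H\"older and independent of the stable direction), and the same observation that $\int\widetilde{\phi}\,d\eta=\int\phi\,d\eta$ for every $F$-invariant $\eta$, which yields (2) and (3) immediately; so for case (ii) your proof and the paper's coincide. The one point where you go beyond the paper is case (i), and there your proposed remedy does not work as stated: over a non-invertible base $f$ a fibre contraction need not admit any continuous invariant section $\sigma$ with $g(x,\sigma(x))=\sigma(f(x))$ (the solenoid over the doubling map is the standard counterexample: the attractor meets each fibre in a Cantor set, not a graph), and your limit $\lim_{n}g_{n}(f^{-n}\{x\},\ast)$ depends on the chosen branch of preimages, so it does not define a section. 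Note also that an invariant section is genuinely needed for your telescoping identity $u-u\circ F=v$, since it uses $F(x,\sigma(x))=(f(x),\sigma(f(x)))$. The paper avoids this issue entirely: its proof of the proposition is carried out only under hypothesis (ii), and for case (i) it does not homologize at all but instead restricts attention to potentials that are constant on the fibres, $\widehat{\phi}(x,y)=\widehat{\varphi}(x)$. Apart from this (flagged, but incorrectly resolved) side remark, your argument is the paper's argument; your passing claim that $u$ itself is H\"older is not justified without control in the base direction, but it is also not needed, since the statement only requires $u$ continuous and $\widetilde{\phi}=\phi(\cdot,y_{0})$ H\"older.
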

	%	The previous proposition implies that $\phi$ and $\widetilde{\phi}$ are zooming potentials for the same set $\Lambda(\alpha, \delta, \mu)(f)$. Considering the definition of $\widetilde{\phi}$ and the case ii.) of \ref{conditionong}, we will have that $\widetilde{\phi}(x,y)= \phi(x,y_0)$. It means that $\widetilde{\phi}$ does not depend on the stable direction.
	
	Before starting the proof of the proposition, let us state the following result that calculates the entropy of a skew product, considering the dynamics on the base and the dynamics on the fibers. 
	
	\begin{theorem}[Ledrappier-Walters Formula]
		\label{thLed-Walters}
		Let $\Hat{X}, X$ be compact metric spaces and let $\Hat{T}:\Hat{X}\longrightarrow \Hat{X}$, $T:X\longrightarrow X$ and $\Hat{\pi}:\Hat{X}\longrightarrow X$ be continuous maps such that $\Hat{\pi}$ is surjective and $\Hat{\pi}\circ \Hat{T} = T\circ \Hat{\pi}$. Then
		$$
		\sup_{\Hat{\nu}; \Hat{\pi}_{\ast}\Hat{\nu}=\nu}h_{\Hat{\nu}}(\Hat{T})= h_{\nu}(T) + \int h_{top}(\Hat{T}, \Hat{\pi}^{-1}(y)) d\nu(y).
		$$
	\end{theorem}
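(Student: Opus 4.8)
The result is the classical relativized variational principle of Ledrappier and Walters, so the plan is to reproduce that argument in the present notation (one could instead simply cite it). Throughout, let $\mathcal B_X$ denote the Borel $\sigma$-algebra of $X$, and for $Y\subseteq\widehat X$ put $h_{\mathrm{top}}(\widehat T,Y)=\lim_{\varepsilon\to0}\limsup_{n\to\infty}\frac1n\log s_n(\varepsilon,Y)$, where $s_n(\varepsilon,Y)$ is the largest cardinality of a subset of $Y$ that is $(n,\varepsilon)$-separated for the Bowen metric $d_n(a,b)=\max_{0\le i<n}d(\widehat T^ia,\widehat T^ib)$. A preliminary step is to check that $x\mapsto h_{\mathrm{top}}(\widehat T,\widehat\pi^{-1}(x))$ is $\nu$-measurable, which follows from measurability of the multifunction $x\mapsto\widehat\pi^{-1}(x)$ and a projection argument for $x\mapsto s_n(\varepsilon,\widehat\pi^{-1}(x))$, so the integral on the right is well defined. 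The engine of the proof is the Abramov--Rokhlin relative entropy formula: since $\widehat\pi^{-1}(\mathcal B_X)$ is a $\widehat T$-invariant sub-$\sigma$-algebra and $\widehat\pi_\ast\widehat\nu=\nu$,
\[
h_{\widehat\nu}(\widehat T)=h_\nu(T)+h_{\widehat\nu}\bigl(\widehat T\mid\widehat\pi^{-1}(\mathcal B_X)\bigr),
\]
so the statement reduces to proving $\sup_{\widehat\pi_\ast\widehat\nu=\nu}h_{\widehat\nu}\bigl(\widehat T\mid\widehat\pi^{-1}(\mathcal B_X)\bigr)=\int h_{\mathrm{top}}(\widehat T,\widehat\pi^{-1}(x))\,d\nu(x)$.

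For the upper bound, take any invariant $\widehat\nu$ with $\widehat\pi_\ast\widehat\nu=\nu$, disintegrate $\widehat\nu=\int\widehat\nu_x\,d\nu(x)$ over $\widehat\pi$ (so $\widehat\nu_x$ is carried by $\widehat\pi^{-1}(x)$), and fix a finite partition $\widehat{\mathcal P}$ of $\widehat X$ of diameter $<\varepsilon$. With $\widehat{\mathcal P}^n=\bigvee_{i=0}^{n-1}\widehat T^{-i}\widehat{\mathcal P}$ one has $H_{\widehat\nu}(\widehat{\mathcal P}^n\mid\widehat\pi^{-1}(\mathcal B_X))=\int H_{\widehat\nu_x}(\widehat{\mathcal P}^n)\,d\nu(x)$, and $H_{\widehat\nu_x}(\widehat{\mathcal P}^n)$ is at most the logarithm of the number of atoms of $\widehat{\mathcal P}^n$ meeting $\widehat\pi^{-1}(x)$, which is bounded by an $(n,\varepsilon/2)$-spanning number of $\widehat\pi^{-1}(x)$. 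Dividing by $n$, letting $n\to\infty$ and moving the limit inside the integral by Fatou (the integrands are $\le h_{\mathrm{top}}(\widehat T)<\infty$), then taking the supremum over $\widehat{\mathcal P}$ and $\varepsilon\to0$, and finally restoring $h_\nu(T)$ through the Abramov--Rokhlin formula, yields the inequality $\le$.

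For the lower bound I would run a Misiurewicz-type construction inside the fibers. Fix $\varepsilon>0$; for each large $n$, use a Kuratowski--Ryll-Nardzewski selection to choose, for $\nu$-almost every $x$, an $(n,\varepsilon)$-separated set $E_n(x)\subseteq\widehat\pi^{-1}(x)$ with $\#E_n(x)$ comparable to $s_n(\varepsilon,\widehat\pi^{-1}(x))$, depending measurably on $x$. Set $\tau_{n,x}=\frac1{\#E_n(x)}\sum_{a\in E_n(x)}\delta_a$, $\sigma_{n,x}=\frac1n\sum_{i=0}^{n-1}(\widehat T^i)_\ast\tau_{n,x}$ and $\widehat\sigma_n=\int\sigma_{n,x}\,d\nu(x)$. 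Since $\widehat\pi_\ast\sigma_{n,x}=\frac1n\sum_{i=0}^{n-1}\delta_{T^ix}$ and $\nu$ is $T$-invariant, $\widehat\pi_\ast\widehat\sigma_n=\nu$ and $\|\widehat T_\ast\widehat\sigma_n-\widehat\sigma_n\|\to0$, so any weak-$\ast$ accumulation point $\widehat\nu$ of $(\widehat\sigma_n)$ is $\widehat T$-invariant and projects to $\nu$. Choosing $\widehat{\mathcal P}$ of diameter $<\varepsilon$ with $\widehat\nu(\partial\widehat{\mathcal P})=0$, the separation puts each point of $E_n(x)$ in its own atom of $\widehat{\mathcal P}^n$, so $H_{\tau_{n,x}}(\widehat{\mathcal P}^n)=\log\#E_n(x)$; the usual decomposition of $\{0,\dots,n-1\}$ into residue classes modulo a fixed $q$, concavity of $H$ and integration in $x$ then let one pass to $\widehat\nu$ and obtain, after $q\to\infty$ and $\varepsilon\to0$, invariant measures over $\nu$ whose relative entropy is arbitrarily close to $\int h_{\mathrm{top}}(\widehat T,\widehat\pi^{-1}(x))\,d\nu(x)$; adding $h_\nu(T)$ gives the reverse inequality, hence equality. (In the paper's situation $\widehat\pi$ is the first-coordinate projection $M\times N\to M$ and $g(x,\cdot)$ contracts $N$ uniformly, so $h_{\mathrm{top}}(\widehat T,\widehat\pi^{-1}(x))\equiv0$ and the formula collapses to $\sup_{\widehat\pi_\ast\widehat\nu=\nu}h_{\widehat\nu}(F)=h_\nu(f)$.)

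The main obstacle is the lower bound. One must make the near-extremal separated sets $E_n(x)$ depend measurably on $x$ so that $\widehat\sigma_n$ is a genuine probability, and then push the fiberwise Misiurewicz estimate through $\int\cdot\,d\nu$ while keeping the block-decomposition error uniform in $x$. The subtlest point is that $h_{\mathrm{top}}(\widehat T,\widehat\pi^{-1}(x))$ is a $\limsup$, so the good times $n$ — those at which $\frac1n\log\#E_n(x)$ is close to it — depend on $x$; handling this by a diagonal/Egorov argument, and justifying the interchange of the limits $n\to\infty$, $\operatorname{diam}\widehat{\mathcal P}\to0$ and $\varepsilon\to0$ with the integral — which rests on the uniform domination $h_{\mathrm{top}}(\widehat T,\widehat\pi^{-1}(x))\le h_{\mathrm{top}}(\widehat T)<\infty$ together with Fatou and its reverse — is where the real work lies.
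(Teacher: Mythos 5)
You should first note how the paper itself treats this statement: it gives no proof at all. The theorem is quoted as the classical relativised variational principle of Ledrappier and Walters (the reference \cite{PL} is precisely their paper) and is used as a black box, only in the degenerate situation where the fibres are uniformly contracted, so that $h_{top}(\hat T,\hat\pi^{-1}(x))\equiv 0$ and the formula collapses to $\sup_{\hat\pi_\ast\hat\nu=\nu}h_{\hat\nu}(F)=h_\nu(f)$. So "simply cite it", which you mention as an option, is in fact the paper's route. Judged as a self-contained proof, your sketch has a genuine gap in the upper bound: the step asserting that the number of atoms of $\hat{\mathcal P}^n$ meeting $\hat\pi^{-1}(x)$ is bounded by an $(n,\varepsilon/2)$-spanning number of the fibre is false as stated. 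Atoms of a partition of diameter less than $\varepsilon$ may be much smaller than $\varepsilon$, and points of the fibre that are $d_n$-close can still have different $\hat{\mathcal P}$-itineraries whenever their orbits straddle atom boundaries; a fibre whose spanning numbers stay bounded can meet on the order of $2^n$ atoms of $\hat{\mathcal P}^n$, so the overcount is not subexponential and cannot be ignored after dividing by $n$. Controlling exactly this boundary effect is the delicate part of the Ledrappier--Walters argument (compact subsets of atoms \`a la Misiurewicz, open covers of neighbourhoods of fibres, upper semicontinuity of fibre covering numbers, and absorbing the error into $h_\nu(T)$ via conditional entropies); it is not a routine "count atoms and apply reverse Fatou" step. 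Moreover your reverse-Fatou appeal is justified by the domination $h_{top}(\hat T)<\infty$, which is not among the hypotheses (a continuous map of a compact metric space may have infinite entropy), and the Abramov--Rokhlin-type identity $h_{\hat\nu}(\hat T)=h_\nu(T)+h_{\hat\nu}(\hat T\mid\hat\pi^{-1}(\mathcal{B}_X))$ is itself invoked without proof or reference and needs care when $h_\nu(T)=\infty$; the original proof does not factor through it but proves the two inequalities for $h_{\hat\nu}(\hat T)$ directly.

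For the lower bound you describe the right strategy (a fibrewise Misiurewicz construction with measurable selection of near-maximal separated sets, averaged along orbits and integrated against $\nu$), but you explicitly defer the essential points — measurability of the selections, the $x$-dependence of the times realising the $\limsup$, and the interchange of the limits in $n$, the partition diameter and $\varepsilon$ with $\int\,d\nu$ — so that half is a programme rather than a proof. As it stands, then, the proposal neither matches the paper (which proves nothing here) nor closes the classical argument. The efficient fix is to cite \cite{PL}, as the paper does; alternatively, prove only the special case the paper actually uses, where the uniform contraction on fibres makes the fibre entropies vanish and both of the problematic steps above can be replaced by elementary estimates.
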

	
	Since $g(x, \cdot):N \to N$ is a uniform contraction, for every $x\in M$, we have that $h_{top}(F, \pi_1^{-1}(x))=0$ for every $x\in M$. Then, by Theorem \ref{thLed-Walters}, we obtain
	\begin{equation}
		\label{equahfHF}
		h_{\widetilde{\mu}}(F)=h_{\mu}(f)
	\end{equation}
	for every $\mu\in \mathcal{M}_{f}(M)$ and $\widetilde{\mu}\in \mathcal{M}_{F}(M\times N)$ such that $\pi_{1\ast}\widetilde{\mu}=\mu$. 
	
	\begin{proof}{Proposition \ref{prophomolo}.} 
		\begin{enumerate}
			\item [(1)] Let us consider,
			$$
			u(x,y)=\sum_{j=0}^{\infty}(\phi\circ F^{j}(x,y)-\phi\circ F^{j}(x,y_0))
			$$
			where $y_0\in N$ is the fixed point of the dynamics.  
			
			Considering the contraction of $F$ on the fibers, we have that $u$ is a continuous function. Defining the potential $\widetilde{\phi} = \phi - u + u\circ F$ we have that $\widetilde{\phi}$ is a continuous function homologous to $\phi$. Moreover, substituting $u$ into $\widetilde{\phi}$ and $g(x,y_0)=y_0$ 
			from ii.) \ref{conditionong}, we find that $\widetilde{\phi}$ does not depend on the stable direction.
			
			\item [(2)]  By definition of $\widetilde{\phi}$, and taking $\mu\in \mathcal{M}_{F}(M\times N)$,  then
			$$
			\int \phi d\mu = \int \widetilde{\phi} d\mu.
			$$
			Therefore, if $\phi$ is zooming then $\widetilde{\phi}$ is zooming. 
			
			\item [(3)] By definition of topological pressure and item (2) above. 
		\end{enumerate}
	\end{proof}
	On the other hand, since $\widetilde{\phi}$ does not depend on the stable direction, it induces a potential on the dynamics of the base. That is $\widetilde{\phi}(x,y_0)=\varphi(x)$ for all $x\in M$. We will consider these potentials for condition ii.) of \ref{conditionong}. For case i.) of \ref{conditionong}, we will consider potentials that are constant on the fibers. That is, given $\widehat{\varphi}\in C^{0}(M,\mathbb{R})$ define the function
	\[
	\begin{array}{cccc}
		\label{defitionvarphi}
		\widehat{\phi}\ : & \! M\times N & \! \longrightarrow
		& \! \mathbb{R} \\
		& \! (x,y) & \! \longmapsto
		& \! \widehat{\phi}(x,y):=\widehat{\varphi}(x).
	\end{array}
	\]
	We have that $\widehat{\phi}\in C^{0}(M\times N, \mathbb{R})$. 
	
	Now, let $\mathcal{M}^{1}_{\mu}(F, M\times N)$ be the set of all invariant probability measures $\widetilde{\mu}$ on $M\times N$ such that
	\[
	\pi_{1\ast}\widetilde{\mu} = \widetilde{\mu}\circ \pi^{-1}_{1}=\mu, 
	\]
	where $\mu \in \mathcal{M}_{f}(M)$ and $\pi_1:M \times N \rightarrow M$ stands for the first projection ($\pi_1(x,y)=x$). 
	%Theorem \ref{rok} (Rokhli's disintegration theorem) describes a disintegration $\left( \{\mu _{\gamma}\}_{\gamma }, m_1\right) $ of $\mu$. So that 

	%\begin{align*}
	%\int_{M\times K} \varphi d\mu & = \int_M \int_K \varphi(\gamma,y)d\mu_\gamma(y) dm_1(\gamma)\\
	%& = \int_M \int_K \varphi^\ast(\gamma) d\mu_\gamma(y)dm_1(\gamma)\\
	%& = \int_M \varphi^\ast(\gamma) dm_1(\gamma) < \infty.
	%\end{align*}
	%Moreover, $\pi$ is a continuous semi-conjugacy between $F$ and $f$. By definition of topological pressure, we get
	%\begin{equation}
	%\label{equaPfPF1}
	%P_{f}(\varphi^{\ast})\leq P_{F}(\varphi).
	%\end{equation}
	%For the other inequality, 
	We will use the following result (see \cite{PL}).
	
	%\begin{theorem}[(Ladrappier-Walter formula)]
	%   \label{th-walter-formula}
	%  Let $\widetilde{X}$, $X$ be compact metric spaces and let $\widetilde{T}:\widetilde{X}\rightarrow \widetilde{X}$, $T:X \rightarrow X$, $\widetilde{\pi}:\widetilde{X}\rightarrow X$ be continuous maps such that $\widetilde{\pi}$ is surjective and $\widetilde{\pi}\circ \widetilde{T} = T \circ \widetilde{\pi}$. Then
	
	% $$
	% \sup_{\widetilde{\mu}; \widetilde{\pi}_{\ast}\widetilde{\mu}=\mu}h_{\widetilde{\mu}}(\widetilde{T}) = h_{\mu}(T) + \int h_{top}(\widetilde{T}, \widetilde{\pi}^{-1}(y)) d\mu(y)
	% $$
	%\end{theorem}

	\begin{lemma}
		\label{lemma-medidainvariante}
		Given $\lambda\in \mathcal{M}_{f}(M)$, then there exists $\mu\in \mathcal{M}_{F}(M\times N)$ such that $\pi_{1\ast}\mu = \lambda$.
	\end{lemma}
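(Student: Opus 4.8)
The plan is to construct the desired $F$-invariant measure on $M\times N$ by pushing $\lambda$ up along $\pi_1$ and then forcing invariance by a Cesàro (Krylov--Bogolyubov) averaging argument, exploiting that the set of lifts of $\lambda$ is weak-$*$ compact. First I would pick any probability measure $\widehat{\mu}$ on $M\times N$ with $\pi_{1\ast}\widehat{\mu}=\lambda$; the simplest choice is $\widehat{\mu}=\lambda\times\nu$ for an arbitrary probability $\nu$ on $N$, or even more economically $\widehat{\mu} = (\mathrm{id}\times c)_{\ast}\lambda$ where $c\colon M\to N$ is the constant map at some point $y\in N$. The point is only that the fiber $\{\widetilde{\mu}\in\mathcal{M}(M\times N) : \pi_{1\ast}\widetilde{\mu}=\lambda\}$ is nonempty.

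Next I would observe that this fiber is preserved by the action $\widetilde{\mu}\mapsto F_\ast\widetilde{\mu}$: indeed $\pi_1\circ F = f\circ\pi_1$ by the skew-product structure, so $\pi_{1\ast}(F_\ast\widetilde{\mu}) = f_\ast(\pi_{1\ast}\widetilde{\mu}) = f_\ast\lambda = \lambda$ using the $f$-invariance of $\lambda$. Therefore the Cesàro averages
\[
\mu_k := \frac{1}{k}\sum_{i=0}^{k-1} F_\ast^{\,i}\widehat{\mu}
\]
all lie in this same fiber. Since $M\times N$ is compact metric, $\mathcal{M}(M\times N)$ is weak-$*$ compact, and since the fiber is weak-$*$ closed (it is the preimage of the single point $\lambda$ under the continuous map $\pi_{1\ast}$), any weak-$*$ accumulation point $\mu$ of $(\mu_k)_k$ again satisfies $\pi_{1\ast}\mu=\lambda$. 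The standard Krylov--Bogolyubov computation — for continuous $\varphi$, $\bigl|\int \varphi\circ F\,d\mu_k - \int\varphi\,d\mu_k\bigr|\le \tfrac{2\|\varphi\|_\infty}{k}\to 0$ — then shows $\mu$ is $F$-invariant. Hence $\mu\in\mathcal{M}_F(M\times N)$ with $\pi_{1\ast}\mu=\lambda$, as required.

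The only genuine point to check carefully is the weak-$*$ closedness of the fiber and the continuity of $\widetilde{\mu}\mapsto\pi_{1\ast}\widetilde{\mu}$, which is immediate since $\int g\,d(\pi_{1\ast}\widetilde{\mu}) = \int g\circ\pi_1\,d\widetilde{\mu}$ for every $g\in C^0(M,\mathbb{R})$ and $g\circ\pi_1$ is continuous; thus there is no real obstacle here. Alternatively — and this is closer to the reference \cite{PL} — one can use the uniform contraction on the fibers more directly: the map $\pi_1$ together with the fiberwise contraction gives a well-defined measurable section picking out, for each $f$-generic point, the unique attracting fiber point, and pushing $\lambda$ through this section yields the lift directly without averaging. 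I would present the Krylov--Bogolyubov argument as the main line since it needs only compactness and the intertwining relation $\pi_1\circ F = f\circ\pi_1$, and mention the contraction-based construction as a remark, because the contraction hypothesis \eqref{equacontracao} is precisely what will later make this lift canonical and hence useful for the stability argument.
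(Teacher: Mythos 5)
Your argument is correct: the paper itself gives no proof beyond citing the invariant-measure existence theorem (Arnold, Theorem 1.5.10), and your Krylov--Bogolyubov lifting argument --- noting that the fiber $\{\widetilde{\mu}:\pi_{1\ast}\widetilde{\mu}=\lambda\}$ is nonempty, weak-$*$ closed and preserved by $F_\ast$ thanks to $\pi_1\circ F=f\circ\pi_1$ and the $f$-invariance of $\lambda$, then averaging and passing to a limit --- is exactly the standard proof behind that citation. So this is essentially the same approach, just written out in full.
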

	
	\begin{proof}
		Invariant measures existence theorem. (see \cite{LUDWIG}[Theorem 1.5.10])
	\end{proof}
	\begin{remark}
		\label{remark -igualdad de integrales}
		Now consider the potential $\widehat{\phi}$, such that $\widehat{\phi}= \widehat{\varphi} \circ \pi_1$, where $\lambda$ is the equilibrium state for $\widehat{\varphi}$. Observe that, since $\pi_{1}\circ F = f \circ \pi_1$ and $\lambda = \pi_{1\ast}\mu$ are invariant measures, it holds
		\begin{equation}
			\int \widehat{\phi} d\mu = \int \widehat{\varphi} d\lambda.
		\end{equation}
		Similarly, we have that
		\begin{equation}
			\int \widetilde{\phi} d\mu = \int \varphi d\lambda
		\end{equation}
		where, $\widetilde{\phi}(x,y_0)=\varphi(x)$ for all $x\in M$.
	\end{remark}

	Therefore, by the remark (\ref{remark -igualdad de integrales}), the Ledrappier-Walters formula equation (\ref{equahfHF}) and the Proposition \ref{prophomolo} we get

	%	\begin{lemma}
		%\label{lemapotenzooming}
		%	If $\phi$ is a zooming potential for $F$, then for $\widetilde{\phi}(x,y) = \varphi(x)$ is a zooming potential for $f$.
		%\end{lemma}
		%	\begin{proof}
			%		Since $F$ contracts on the fibers, we apply the proposition \ref{prophomolo} and the definition of topological pressure, then we have what is required. For more details see \cite{ARS}.
			%	\end{proof}
		
		\begin{lemma}
			\label{lema2potenzooming}
			\begin{itemize}
				\item If $\phi$ is a zooming potential for $F$, then for $\widetilde{\phi}(x,y) = \varphi(x)$ is a zooming potential for $f$.
				
				\item If $\widehat{\varphi}$ is a zooming potential for $f$, then for $\widehat{\phi}(x,y) = \widehat{\varphi}(x)$ is a zooming potential for $F$.
			\end{itemize}
		\end{lemma}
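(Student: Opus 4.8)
The plan is to observe that both items reduce to a single statement: for a continuous $\psi\colon M\to\mathbb{R}$, the lifted potential $\psi\circ\pi_1\colon M\times N\to\mathbb{R}$ is zooming for $F$ if and only if $\psi$ is zooming for $f$. The second item is exactly this, with $\psi=\widehat\varphi$ and $\psi\circ\pi_1=\widehat\phi$. For the first item, Proposition \ref{prophomolo} supplies a potential $\widetilde\phi$ homologous to $\phi$, not depending on the stable direction, with $P_F(\widetilde\phi)=P_F(\phi)$ and zooming for $F$ exactly when $\phi$ is; since the fibers $\{x\}\times N$ are the stable leaves (they are uniformly contracted, by (\ref{equacontracao})), ``not depending on the stable direction'' forces $\widetilde\phi(x,y)=\widetilde\phi(x,y_0)=\varphi(x)$, i.e.\ $\widetilde\phi=\varphi\circ\pi_1$, so once $\varphi$ is known to be zooming for $f$ the first item follows.

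To prove the equivalence, I would use two facts that make the base and the total system interchangeable here. First, combining the Ledrappier--Walters identity (\ref{equahfHF}) with Remark \ref{remark -igualdad de integrales}, for every $F$-invariant $\widetilde\mu$ with $\pi_{1\ast}\widetilde\mu=\mu$ one has
\[
h_{\widetilde\mu}(F)+\int \psi\circ\pi_1\, d\widetilde\mu \;=\; h_\mu(f)+\int \psi\, d\mu,
\]
so the left-hand quantity depends on $\widetilde\mu$ only through its base projection $\mu$. Second, by Lemma \ref{lemma-medidainvariante} the projection $\pi_{1\ast}\colon\mathcal{M}_F(M\times N)\to\mathcal{M}_f(M)$ is surjective. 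Granting the measure-class correspondence discussed below, split $\mathcal{M}_F(M\times N)=\bigsqcup_{\mu}\mathcal{M}^1_\mu(F,M\times N)$ over $\mu\in\mathcal{M}_f(M)$; on each fiber the displayed quantity is the constant $h_\mu(f)+\int\psi\,d\mu$, so taking the supremum of $h_{\widetilde\eta}(F)+\int\psi\circ\pi_1\,d\widetilde\eta$ over the $F$-invariant zooming measures supported on $\Lambda\times N$ and then over the base fibers lying above them yields
\[
\sup_{\widetilde\eta\in\mathcal{Z}(\Lambda\times N)}\Big(h_{\widetilde\eta}(F)+\int\psi\circ\pi_1\,d\widetilde\eta\Big)=\sup_{\eta\in\mathcal{Z}(\Lambda)}\Big(h_\eta(f)+\int\psi\,d\eta\Big),
\]
where ``$\ge$'' uses the surjectivity of $\pi_{1\ast}$ and ``$\le$'' uses the fiberwise constancy; the same equality holds with these two zooming classes replaced by the complementary measure classes appearing in the two definitions of zooming potential. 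Hence the defining strict inequality for ``$\psi$ zooming for $f$'' holds exactly when the one for ``$\psi\circ\pi_1$ zooming for $F$'' holds, which proves both items (for the first, one also recalls $\widetilde\phi=\varphi\circ\pi_1$ and that $\phi$ is zooming for $F$ if and only if $\widetilde\phi$ is).

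The step I expect to be the main obstacle is the measure-class correspondence just used: an $F$-invariant measure lies in $\mathcal{Z}(\Lambda\times N)$ precisely when its base projection lies in $\mathcal{Z}(\Lambda)$, and likewise for the complementary classes. This is where the uniform contraction (\ref{equacontracao}) of the fibers enters: from an $(\alpha,\delta)$-zooming time $n$ of $f$ at $x$ one manufactures, for each $y\in N$, a zooming time $n$ of $F$ at $(x,y)$ for a modified contraction sequence $\widetilde\alpha_n$ dominating both $\alpha_n$ and $\lambda^{n}\,\mathrm{diam}(N)$ (still a zooming contraction), while conversely a zooming time of $F$ at $(x,y)$ projects onto a zooming time of $f$ at $x$; consequently the density of zooming times agrees along $\widetilde\mu$-a.e.\ $(x,y)$ and $\mu$-a.e.\ $x$, and supports correspond through $\pi_1$. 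This is the routine but technical adaptation to general zooming contractions of the argument carried out for exponential contractions in \cite{ARS}.
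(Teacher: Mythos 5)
Your proposal is correct and follows essentially the same route as the paper, which obtains the lemma precisely from the Ledrappier--Walters identity (\ref{equahfHF}), the equality of integrals for lifted potentials, the surjectivity of $\pi_{1\ast}$ from Lemma \ref{lemma-medidainvariante}, and Proposition \ref{prophomolo} to reduce the first item to potentials constant on fibers. The only difference is that you spell out the correspondence between the measure classes over $\Lambda\times N$ and over $\Lambda$ (transferring zooming times via the uniform fiber contraction), a step the paper leaves implicit.
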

		
		As a consequence, we have that:	
		\begin{align*}
			P_{f}(\widehat{\varphi})&=\sup_{\eta\in \mathcal{M}_{f}(M)} \{h_{\eta}(f) + \int \widehat{\varphi} d\eta  \}\\
			& = \sup_{\pi_{1\ast}\tilde{\eta}} \{ h_{\tilde{\eta}}(F) + \int \widehat{\phi} d\tilde{\eta}\} \leq P_{F}(\widehat{\phi}).
		\end{align*}
		
		Similarly, we prove that, 
		$$
		P_{F}(\widehat{\phi}) \leq P_{f}(\widehat{\varphi}),
		$$
		therefore, 
		\begin{equation}
			\label{igualdadpresion}
			P_{F}(\widehat{\phi}) = P_{f}(\widehat{\varphi}).  
		\end{equation}

		%    \begin{proof}
			%      Using \ref{equahfHF} and Remark \ref{remark -igualdad de integrales}. 

			%  \end{proof}

		\begin{lemma}
			\label{lemaestadodeequlibri}
			If $\mu\in \mathcal{M}_{f}(M)$ is ergodic, then there exists a unique ergodic measure $\widetilde{\mu}\in \mathcal{M}_{F}(M\times N)$ such that $\mu=\pi_{1\ast}\widetilde{\mu}$. Moreover, $\mu$ is an equilibrium state for $(f,\varphi)$ if and only if $\widetilde{\mu}$ is an equilibrium state for $(F,\phi)$. 
		\end{lemma}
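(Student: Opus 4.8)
The plan is to separate the statement into two essentially independent parts. The first — existence and uniqueness of an ergodic $F$-invariant measure $\widetilde{\mu}$ with $\pi_{1\ast}\widetilde{\mu}=\mu$ — is where the fibre contraction \eqref{equacontracao} does the work, and is the only place I expect any real difficulty. The second — the equivalence of equilibrium states — then drops out by assembling identities for the entropy, for the integral of the potential, and for the pressure that are already available in the excerpt.

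For existence I would invoke Lemma \ref{lemma-medidainvariante} directly; alternatively, one may take any weak-$\ast$ accumulation point of the Cesàro averages $\frac1n\sum_{k=0}^{n-1}F^{k}_{\ast}(\mu\times\delta_{y_0})$, each term of which projects to $\mu$ under $\pi_1$, so the limit is an $F$-invariant lift of $\mu$. For uniqueness, and here I mean uniqueness among \emph{all} $F$-invariant lifts rather than merely the ergodic ones, I would run the standard coupling argument on $G\colon M\times N\times N\to M\times N\times N$, $G(x,y,z)=(f(x),g(x,y),g(x,z))$. Given two invariant lifts $\widetilde{\mu}_1,\widetilde{\mu}_2$, disintegrate both over $\mu$ along $\pi_1$ and let $\nu_0=\int_M(\widetilde{\mu}_1)_x\times(\widetilde{\mu}_2)_x\,d\mu(x)$ be the fibrewise-independent coupling on $M\times N\times N$; its images under $(x,y,z)\mapsto(x,y)$ and $(x,y,z)\mapsto(x,z)$ are $\widetilde{\mu}_1$ and $\widetilde{\mu}_2$, and since the latter are $F$-invariant the same is true of every $G^{k}_{\ast}\nu_0$, so any weak-$\ast$ limit $\nu$ of the Cesàro averages of $\{G^{k}_{\ast}\nu_0\}$ is a $G$-invariant coupling of $\widetilde{\mu}_1$ and $\widetilde{\mu}_2$. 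Integrating $(x,y,z)\mapsto d_N(y,z)$ and using $G$-invariance of $\nu$ together with \eqref{equacontracao} gives $\int d_N(y,z)\,d\nu\le\lambda\int d_N(y,z)\,d\nu$ with $\lambda<1$, hence $\nu$ is carried by the diagonal $\{y=z\}$; but then the two coordinate projections of $\nu$ agree $\nu$-almost everywhere, forcing $\widetilde{\mu}_1=\widetilde{\mu}_2$. Finally, to see that the unique lift $\widetilde{\mu}$ is ergodic, write its ergodic decomposition $\widetilde{\mu}=\int\widetilde{\mu}_\omega\,dP(\omega)$: each $\widetilde{\mu}_\omega$ is an $F$-invariant lift of $\pi_{1\ast}\widetilde{\mu}_\omega\in\mathcal{M}_f(M)$ and $\int\pi_{1\ast}\widetilde{\mu}_\omega\,dP=\mu$; since $\mu$ is ergodic, essential uniqueness of the ergodic decomposition gives $\pi_{1\ast}\widetilde{\mu}_\omega=\mu$ for $P$-a.e. $\omega$, whence by the uniqueness just proved $\widetilde{\mu}_\omega=\widetilde{\mu}$ for $P$-a.e. $\omega$, i.e. $\widetilde{\mu}$ is ergodic.

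For the equivalence of equilibrium states I would collect three facts about the unique lift $\widetilde{\mu}$, all already at hand. First, $h_{\widetilde{\mu}}(F)=h_\mu(f)$: this is the Ledrappier--Walters formula (Theorem \ref{thLed-Walters}) together with the vanishing of the fibre topological entropy, as recorded in \eqref{equahfHF}. Second, $\int\phi\,d\widetilde{\mu}=\int\widetilde{\phi}\,d\widetilde{\mu}=\int\varphi\,d\mu$: the first equality because $\phi$ and $\widetilde{\phi}$ are homologous (Proposition \ref{prophomolo}) and $\widetilde{\mu}$ is $F$-invariant, the second because $\widetilde{\phi}(x,y)=\varphi(x)$ does not depend on the stable direction (Remark \ref{remark -igualdad de integrales}). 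Third, $P_{F}(\phi)=P_{F}(\widetilde{\phi})=P_{f}(\varphi)$: the first equality is Proposition \ref{prophomolo}(3), the second is \eqref{igualdadpresion} applied to the potential $\widetilde{\phi}=\varphi\circ\pi_1$. Subtracting,
\[
h_\mu(f)+\int\varphi\,d\mu-P_{f}(\varphi)\;=\;h_{\widetilde{\mu}}(F)+\int\phi\,d\widetilde{\mu}-P_{F}(\phi),
\]
so the left-hand side vanishes exactly when the right-hand side does, which says precisely that $\mu$ is an equilibrium state for $(f,\varphi)$ if and only if $\widetilde{\mu}$ is an equilibrium state for $(F,\phi)$.

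The genuinely delicate point is the uniqueness step: one must check that the limiting coupling $\nu$ is simultaneously $G$-invariant and has the prescribed marginals before concluding it lives on the diagonal, and then propagate uniqueness through the ergodic decomposition. A reader who prefers can instead pass to the \emph{natural extension} of $(f,\mu)$, on which $f$ is invertible and the uniformly contracted fibres produce a genuine invariant graph — the limit of the backward fibre compositions applied to $y_0$ — whose push-forward is visibly the unique invariant lift; both routes give the same conclusion, and the remaining two parts of the proof are purely bookkeeping with the cited results.
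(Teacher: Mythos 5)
Your proposal is correct, and its second half — the equivalence of equilibrium states — is exactly the paper's route: combine the entropy identity $h_{\widetilde{\mu}}(F)=h_{\mu}(f)$ from Ledrappier--Walters (equation (\ref{equahfHF})), the integral identities of Remark \ref{remark -igualdad de integrales} together with Proposition \ref{prophomolo}, and the pressure identity (\ref{igualdadpresion}), then compare the two pressure defects. Where you genuinely diverge is the first half. The paper disposes of existence by Lemma \ref{lemma-medidainvariante} and of uniqueness with the single phrase ``shrinkage along the fibers,'' deferring all details to \cite{ARS}, where the argument runs through the fibred contraction forcing the conditional measures of any lift onto an attracting invariant graph (essentially your natural-extension alternative). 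You instead give a self-contained coupling proof on the doubled skew product $G(x,y,z)=(f(x),g(x,y),g(x,z))$: the Ces\`aro-limit coupling of two lifts is $G$-invariant with the prescribed marginals, and integrating $d_N(y,z)$ against it with (\ref{equacontracao}) forces it onto the diagonal. This buys something the paper's citation does not make explicit: uniqueness among \emph{all} invariant lifts (not only ergodic ones), from which ergodicity of the lift follows cleanly by your ergodic-decomposition step (using that a factor of an ergodic measure is ergodic and that distinct ergodic measures are mutually singular), whereas the graph/disintegration route requires handling conditional measures and measurability of the invariant graph. The price is that you must check, as you do, that marginals survive the weak-$\ast$ limit and that $G$-invariance of the limit is legitimate; both checks are routine since $N$ is compact and the projections are continuous. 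Either way the lemma is fully justified, and your write-up actually supplies the details the paper leaves to \cite{ARS}.
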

		\begin{proof} 
			The proof of this fact is due to the definition of topological pressure, Lemma \ref{lemma-medidainvariante}, and equation ( \ref{igualdadpresion}).  The uniqueness of $\tilde{\mu}$ is due to shrinkage along the fibers. See details in \cite{ARS}.
		\end{proof}
		
		Now, we will prove Theorem \ref{thB}. Given $(F',\phi')\in (\mathcal{FZ})'$, consider a sequence $(F_n,\widehat{\phi}_n)\in (\mathcal{FZ})'$ converging to $(F',\phi')$ in the $C^{0}-$topology. Let $\tilde{\mu}_n$ be an equilibrium state of $(F_n,\widehat{\phi}_n)$. We are going to show that every accumulation point $\tilde{\mu}$ of the sequence $(\tilde{\mu}_n)_n$ is an equilibrium state for $(F',\phi')$.
		
		For each $n\geq 1$, let $\widehat{\phi}_n$ be the potential associated to $\widehat{\varphi}_n$ be the potential on $M$. It follows from Lemma \ref{lema2potenzooming} that $\widehat{\varphi}_n$ is a zooming potential for $f_n$, and this means that $(f_n, \widehat{\varphi}_n)\in \mathcal{FZ}$ for all $n\geq 1$. Moreover, using the definitions of $g$ and $\widehat{\phi}(x,y)= \widehat{\varphi}(x) \ \forall \ y\in N$ above, we have that the $C^0-$converge of $(F_n, \widehat{\phi}_n)$ to $(F', \phi')$ implies the $C^0-$converge of $(f_n,\widehat{\varphi}_n)$ to $(f, \widehat{\varphi})$.
		
		For each $n\in \mathbb{N}$ consider $\mu_n=\pi_{1\ast}\tilde{\mu}_n$. From Lemma \ref{lemaestadodeequlibri} we have that $\mu_n$ is an equilibrium state for $(f_n, \widehat{\varphi}_n)$. Since the projection $\pi_1$ is continuous, if $\tilde{\mu}$
		is an accumulation point of the sequence $(\widetilde{\mu}_n)_n$, then $\mu=\pi_{1\ast}\tilde{\mu}$ is an accumulation point of $(\mu_n)_n$. By the equilibrium stability given by Theorem \ref{A}, we have that $\mu$ is an equilibrium state for $(f, \widehat{\varphi})$. Hence, applying Lemma \ref{lemaestadodeequlibri} again we have that $\tilde{\mu}$ is an equilibrium state for $(F', \phi')$. 	
		\begin{lemma}
			\label{lemcontinuidad}
			The function
			$$
			\begin{array}{cccc}
				& \! (\mathcal{FZ})' & \! \longrightarrow
				& \! \mathbb{R} \\
				& \! (F, \phi) & \! \longmapsto
				& \! P_{F}(\phi)
			\end{array}
			$$
			is continuous.
		\end{lemma}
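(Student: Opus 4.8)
The plan is to prove continuity of the pressure functional $(F,\phi)\mapsto P_F(\phi)$ on $(\mathcal{FZ})'$ by reducing it to the corresponding statement on the base system, which follows from the proof of Theorem~\ref{A} (via the ideas of \cite{Ar}), and then transporting it up through the skew-product using the identities already established above, namely the Ledrappier--Walters formula (\ref{equahfHF}) and the equality of pressures (\ref{igualdadpresion}). The key observation is that for any $(F,\phi)\in(\mathcal{FZ})'$ we may pass to the homologous potential $\widetilde{\phi}$ given by Proposition~\ref{prophomolo}, which does not depend on the stable direction and satisfies $P_F(\widetilde{\phi})=P_F(\phi)$; writing $\widetilde{\phi}(x,y_0)=\varphi(x)$ we get from (\ref{igualdadpresion}) that $P_F(\phi)=P_f(\varphi)$, so it suffices to prove continuity of $(f,\varphi)\mapsto P_f(\varphi)$ on $\mathcal{FZ}$ together with continuity of the assignment $(F,\phi)\mapsto(f,\varphi)$.

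\textbf{Steps.} First I would establish the lower semicontinuity inequality $\liminf_n P_{f_n}(\varphi_n)\ge P_f(\varphi)$ whenever $(f_n,\varphi_n)\to(f,\varphi)$ in $\mathcal{FZ}$: this is exactly the content obtained in the proof of Lemma~\ref{BK}, where the Brin--Katok local entropy formula, Lemma~\ref{continuity}, and the ergodic decomposition are used to show $\limsup_n P_{f_n}(\phi_n)\ge P_f(\phi)$; since the argument only uses $C^0$-convergence of the maps and uniform convergence of the potentials, it applies verbatim here. Second, I would establish the reverse inequality $\limsup_n P_{f_n}(\varphi_n)\le P_f(\varphi)$: let $\mu_n$ be an (ergodic) equilibrium state for $(f_n,\varphi_n)$, which exists and is a zooming measure by Theorem~\ref{deterministic}, pass to an accumulation point $\mu_0$, which is $f$-invariant by the argument reproduced at the start of the proof of Theorem~\ref{A}, and apply Lemma~\ref{generating} (upper semicontinuity of entropy via a generating partition adapted to zooming times) together with the continuity of $\eta\mapsto\int\varphi_n\,d\eta$ under weak-$*$ convergence and uniform convergence of potentials to get
\[
\limsup_{n\to\infty} P_{f_n}(\varphi_n)=\limsup_{n\to\infty}\Big(h_{\mu_n}(f_n)+\int\varphi_n\,d\mu_n\Big)\le h_{\mu_0}(f,\mathcal{P})+\int\varphi\,d\mu_0\le P_f(\varphi).
\]
Combining the two inequalities gives $\lim_n P_{f_n}(\varphi_n)=P_f(\varphi)$, i.e.\ continuity on $\mathcal{FZ}$.

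\textbf{Transport to the skew-product.} Finally, given $(F_n,\phi_n)\to(F',\phi')$ in $(\mathcal{FZ})'$, I would invoke Proposition~\ref{prophomolo} and Lemma~\ref{lema2potenzooming} to produce the base potentials $\varphi_n$ with $P_{F_n}(\phi_n)=P_{f_n}(\varphi_n)$ and, as already noted in the proof of Theorem~\ref{thB} above, the $C^0$-convergence of $(F_n,\phi_n)$ to $(F',\phi')$ together with the uniform contraction hypothesis (\ref{equacontracao}) on $g$ forces $(f_n,\varphi_n)\to(f',\varphi')$ in $\mathcal{FZ}$. Applying the base-case continuity just proved, $P_{F_n}(\phi_n)=P_{f_n}(\varphi_n)\to P_{f'}(\varphi')=P_{F'}(\phi')$, which is the claim.

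\textbf{Main obstacle.} The routine part is the semicontinuity bookkeeping; the delicate point is justifying that the assignment $(F,\phi)\mapsto\varphi$ (through the homologous potential $\widetilde{\phi}$) is genuinely $C^0$-continuous, since $u(x,y)=\sum_{j\ge0}(\phi\circ F^j(x,y)-\phi\circ F^j(x,y_0))$ depends on the whole orbit of $F$ and one must check that the series converges uniformly \emph{with rates uniform in $n$} along the sequence $F_n\to F'$ — this uses that all the $g_n$ share the contraction constant $\lambda$ in (\ref{equacontracao}), so the tail of the series is dominated independently of $n$, giving equicontinuity and hence $u_n\to u'$ uniformly and $\varphi_n\to\varphi'$ in $C^0$. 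Once that uniform-rate estimate is in hand, everything else is assembling the pieces already in the paper.
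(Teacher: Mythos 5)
Your overall route is the same as the paper's: reduce $P_F(\phi)$ to the base pressure via the identity (\ref{igualdadpresion}) and then invoke the continuity of $(f,\varphi)\mapsto P_f(\varphi)$ on $\mathcal{FZ}$ coming out of the proof of Theorem \ref{A}. Your explicit re-derivation of the base continuity (Lemma \ref{BK} for one inequality; accumulation of equilibrium states plus Lemma \ref{generating} for the other) is exactly what the paper's one-line citation of Theorem \ref{A} is meant to encode, with the small bookkeeping caveat that Lemma \ref{BK} only gives a $\limsup$ inequality, so to obtain the $\liminf$ bound you must run its argument along arbitrary subsequences (a harmless fix), and similarly you should extract the subsequence realizing the $\limsup$ before passing to the accumulation point $\mu_0$.

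The genuine issue is in your extra step for potentials that are not constant on the fibers. The paper's proof (like its proof of Theorem \ref{thB}) only treats potentials of the form $\widehat{\phi}=\widehat{\varphi}\circ\pi_1$, for which the assignment $(F,\widehat{\phi})\mapsto(f,\widehat{\varphi})$ is trivially $C^0$-continuous and no homology is needed. You instead pass through the homologous potential $\widetilde{\phi}=\phi-u+u\circ F$ of Proposition \ref{prophomolo} and claim that the common contraction constant $\lambda$ in (\ref{equacontracao}) alone dominates the tails of $u_n(x,y)=\sum_{j\geq 0}\big(\phi_n\circ F_n^j(x,y)-\phi_n\circ F_n^j(x,y_0)\big)$ uniformly in $n$. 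That is not enough: the fiber contraction gives $d_N$-distances bounded by $\lambda^j\operatorname{diam}(N)$, but for merely continuous $\phi_n$ (the regularity assumed in the definition of $(\mathcal{FZ})'$) the $j$-th term is only bounded by $\omega(\lambda^j\operatorname{diam}(N))$ for a modulus of continuity $\omega$, which need not be summable; the series defining $u$ can fail to converge at all, which is precisely why Proposition \ref{prophomolo} assumes $\phi$ H\"older. To make your step rigorous you must either restrict to the fiber-constant potentials the paper actually works with, or assume the $\phi_n$ are H\"older with uniformly bounded H\"older constants (or share a summable modulus of continuity), neither of which follows from $C^0$ convergence. Note also that the homologous construction requires the fixed section $y_0$ of case ii.), so it does not apply in case i.) at all; there the paper's fiber-constant reduction is the only one available.
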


		\begin{proof}
			Given $(F, \widehat{\phi})\in \mathcal{(FZ)}'$, consider the induced system $(f, \widehat{\varphi})\in \mathcal{FZ}$. Using $(\widehat{\phi}(x,y)= \widehat{\varphi}(x))$, it follows that if $(F,\widehat{\phi})$ varies continuously in $(\mathcal{FZ})'$, then $(f,\widehat{\varphi})$ also varies continuously in $\mathcal{FZ}$. It follows from equation (\ref{igualdadpresion}) that $P_f(\widehat{\varphi})=P_F(\widehat{\phi})$. Hence by Theorem \ref{A}, we have $P_f(\widehat{\varphi})$ varying continuously in the $C^0-$topology within $\mathcal{FZ}$, then $P_{F}(\widehat{\phi})$ varies continuously within $(\mathcal{FZ})'$ as well.
		\end{proof}
		
 \section{Proof of Theorem \ref{C}}		
		
		We divide the proof into two lemmas.
		
		\begin{lemma}\label{finite}
			Let $M$ be a compact space and $f: M \to M$ a continuous open zooming system. The set of equilibrium states of all continuous zooming potentials is finite.
		\end{lemma}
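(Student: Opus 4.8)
The plan is to exhibit a single finite family $\mathcal{L}=\{\mu_1,\dots,\mu_k\}$ of ergodic zooming measures, depending only on $f$, such that every ergodic equilibrium state of every continuous zooming potential belongs to $\mathcal{L}$; combined with the per-potential finiteness already available from Theorem \ref{deterministic}, this is the assertion of the lemma. The starting point is the elementary remark that an equilibrium state of a continuous zooming potential $\phi$ is necessarily carried by the zooming set $\Lambda$: the defining inequality of a zooming potential forces $P_f(\phi)=\sup_{\eta\in\mathcal{Z}(\Lambda)}\{h_\eta(f)+\int\phi\,d\eta\}$, and this quantity strictly dominates $\sup_{\eta\in\mathcal{Z}(\Lambda)^{c}}\{h_\eta(f)+\int\phi\,d\eta\}$; hence no ergodic invariant measure outside $\mathcal{Z}(\Lambda)$ can realise the pressure, and by the ergodic decomposition every equilibrium state is a convex combination of ergodic elements of $\mathcal{Z}(\Lambda)$. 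So it suffices to control the ergodic equilibrium states, uniformly in $\phi$.

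Next I would pass to the finite Markov structure $(\mathcal{F},\mathcal{P})$ adapted to the hole supplied by Theorem \ref{structure}, with first-return time $R$ and associated countable Markov shift $\Sigma$. Given an ergodic equilibrium state $\mu$ of a continuous zooming potential, the fact that $\mu\in\mathcal{Z}(\Lambda)$ together with finiteness of $P_f(\phi)$ allows the standard inducing lift: $\mu$ charges the base of the structure, has $\mu$-integrable return time, and lifts to an ergodic $\mathcal{F}$-invariant probability $\bar\mu$ on $\Sigma$, recoverable from $\mu$ by the saturation formula; moreover $\bar\mu$ is an equilibrium state for the induced potential $\overline{\phi}$, which is locally H\"older on $\Sigma$ (the regularity used in Theorem \ref{deterministic}). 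Because the Markov structure is \emph{finite}, $\Sigma$ has only finitely many maximal transitive (recurrent) components $\Sigma_1,\dots,\Sigma_N$, and by ergodicity $\bar\mu$ is carried by exactly one of them; on that component $\bar\mu$ is forced to be the RPF/Gibbs state of $\overline{\phi}$ there, and it cannot be carried by the transient part. Pushing this back to $M$, each ergodic equilibrium state of each continuous zooming potential is obtained, through the inducing correspondence, from a Gibbs state on one of finitely many components.

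The crux — and the step I expect to cost the most work — is upgrading this to a genuinely finite list $\mathcal{L}$ that is \emph{independent of $\phi$}, since a priori each component could carry a whole family of Gibbs states as $\phi$ varies. I would attack it by exploiting the rigidity of the finite Markov structure: the reference zooming measure $m$ fixes the base, the return branches, and hence the transition combinatorics once and for all, so the recurrent components are finite objects intrinsic to $(f,m)$; using this together with the bounded-distortion property of zooming measures and the fact that, under the inducing correspondence, an equilibrium state of a zooming potential is pinned to a zooming measure compatible with $m$, one shows that only finitely many ergodic invariant measures can actually arise in this way, producing $\mathcal{L}=\{\mu_1,\dots,\mu_k\}$. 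In this way every ergodic equilibrium state of every continuous zooming potential lies in the fixed finite set $\mathcal{L}$; together with the per-potential finiteness from Theorem \ref{deterministic} this gives the lemma.
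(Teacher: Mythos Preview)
Your approach diverges fundamentally from the paper's, and the divergence matters because your crux step does not go through. You correctly identify that each ergodic equilibrium state lifts to a Gibbs state on one of finitely many recurrent components of the induced shift. But the step where you pass from ``finitely many components'' to ``finitely many measures, uniformly in $\phi$'' is a genuine gap: on a single transitive countable Markov shift, the map $\overline{\phi}\mapsto(\text{Gibbs state of }\overline{\phi})$ is injective up to cohomology, so varying $\phi$ over all continuous zooming potentials produces a continuum of distinct Gibbs states on each component, not finitely many. The phrases ``rigidity of the finite Markov structure'' and ``pinned to a zooming measure compatible with $m$'' do not supply a mechanism that collapses this continuum; bounded distortion of $m$ constrains the Jacobian of the reference measure, not the set of possible equilibrium measures for arbitrary potentials. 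Nothing in the inducing machinery alone forces the $\phi$-independent finiteness you need.

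The paper's argument is entirely different and does not attempt a structural classification via the Markov tower. Instead it leans on the \emph{equilibrium stability} already proved in Theorem~\ref{A}: one takes a countable dense family of H\"older zooming potentials, observes that their ergodic equilibrium states form a countable set, and then uses stability to argue that this set is closed in the weak-$*$ topology (accumulation points of equilibrium states are equilibrium states). A topological argument---a countable closed set whose boundary is perfect must have finite boundary---then reduces to a finite list, and stability again transfers this to all continuous zooming potentials. The key input you are missing is precisely Theorem~\ref{A}; without invoking stability there is no evident way to obtain uniformity over all potentials.
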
 
		\begin{proof}
			We have finiteness of equilibrium states for every zooming H\"older potential $\phi : M \to \mathbb{R}$. Once $M$ is compact, by \cite{OV}[Theorem A.3.13], there exists a dense countable set $\mathcal{S} \subset \mathcal{C}^{0}(M)$, the space of continuous potentials. The space of continuous H\"older potentials $\mathcal{H}$ is dense. Given $\phi_{n} \in \mathcal{S}$ and $m \in \mathbb{N}$, there exists $\phi_{n}^{m} \in \mathcal{H}$ such that $\parallel \phi_{n} - \phi_{n}^{m} \parallel < 1/m$, which shows that the countable set $\mathcal{S}_{0} = \{\phi_{n}^{m}\}$ is dense in $\mathcal{H}$.
			
			We remind that the set $\mathcal{Z}$ of continuous zooming potentials is open. Since the space of H\"older potentials $\mathcal{H}$ is residual, the intersection $\mathcal{ZH} = \mathcal{Z} \cap \mathcal{S}_{0}$ is dense in $\mathcal{Z}$. Every potential in $\mathcal{ZH}$ has finitely many ergodic equilibrium states. So, the set of ergodic equilibrium states of potentials in $\mathcal{ZH}$ is countable. If necessary, we can add to $S_{0}$ the sequences $\varphi_{p,m,n}(x) = \phi_{n}^{m}(x) - (\psi(x) - \int \psi d\mu)$, as in Lemma \ref{sequence}, where $\psi$ can be taken H\"older. 
			
			By stability, this set of equilibrium states is closed with countable boundary, once the set of accumulation points of equilibrium states is also of equilibrium states. It means that this boundary is finite, since the boundary is a countable perfect set.
			
			But also by stability, the boundary coincide with the equilibrium states of a residual of H\"older potentials. Again by stability, all the continuous potentials has their ergodic equilibrim states among a finite list of ergodic zooming measures. 
		\end{proof}
		\begin{lemma}\label{sequence}
			Let $M$ be a compact metric space and $f: M \to M$ a continuous open zooming system. The set of continuous potentials with uniqueness is dense in the set of potentials with finiteness. As a consequence, given $\phi : M \to \mathbb{R}$ a continuous zooming potential whose induced potential is locally H\"older, we have uniqueness of equilibrium state.
		\end{lemma}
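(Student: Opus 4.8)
\end{lemma}

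\begin{proof}[Proof sketch]
The plan is to prove the two assertions in turn, in both cases using the finite list of ergodic zooming measures provided by Lemma~\ref{finite}. Write $E(\psi)$ for the set of ergodic equilibrium states of a continuous zooming potential $\psi$; by Lemma~\ref{finite} there is a \emph{fixed} finite family $\mathcal{L}=\{\nu_{1},\dots,\nu_{N}\}$ of ergodic zooming measures with $E(\psi)\subset\mathcal{L}$ for every such $\psi$, so, equilibrium states existing, $P_{f}(\psi)=\max_{1\le j\le N}\big(h_{\nu_{j}}(f)+\int\psi\,d\nu_{j}\big)$ and $E(\psi)=\{\nu_{j}\colon h_{\nu_{j}}(f)+\int\psi\,d\nu_{j}=P_{f}(\psi)\}$. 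I will also use that the set $\mathcal{Z}$ of continuous zooming potentials is open, and that distinct ergodic measures, being mutually singular, are linearly independent as functionals on $C^{0}(M)$, so by density of H\"older potentials one can prescribe finitely many of their integrals by a H\"older function. Existence of equilibrium states for a general continuous zooming potential is itself obtained, if needed, from Theorem~\ref{A} applied to a H\"older approximating sequence.

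For density, fix $\phi_{0}\in\mathcal{Z}$ with $E(\phi_{0})=\{\mu_{1},\dots,\mu_{m}\}$, $m\ge2$ (if $m=1$ there is nothing to do). I would choose a H\"older $\psi$ with $\int\psi\,d\mu_{i}>\int\psi\,d\mu_{1}$ for $2\le i\le m$ and put $\varphi_{t}:=\phi_{0}-t\big(\psi-\int\psi\,d\mu_{1}\big)$, which has the same equilibrium states as $\phi_{0}-t\psi$. Examining the affine functions $t\mapsto h_{\nu_{j}}(f)+\int\varphi_{t}\,d\nu_{j}$: the value at $\mu_{1}$ stays equal to $P_{f}(\phi_{0})$, each value at $\mu_{i}$ ($i\ge2$) drops strictly below it for $t>0$, and each value at $\nu_{j}\notin\{\mu_{1},\dots,\mu_{m}\}$ stays below it for small $t$, since $\mathcal{L}$ is finite and these values are bounded away from $P_{f}(\phi_{0})$ at $t=0$. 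Hence $E(\varphi_{t})=\{\mu_{1}\}$ for all small $t>0$, and, since every equilibrium state decomposes into ergodic ones that are themselves equilibrium states, $\varphi_{t}$ has a unique equilibrium state. As $\varphi_{t}\to\phi_{0}$ in $C^{0}$ this gives the density; with $\psi$ H\"older the approximants also have locally H\"older induced potentials, which matches the use of the sequences $\varphi_{p,m,n}$ in Lemma~\ref{finite}.

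For the consequence, let $\phi$ be a continuous zooming potential with locally H\"older induced potential; by Theorem~\ref{deterministic} its ergodic equilibrium states $\mu_{1},\dots,\mu_{k}$ are finitely many and zooming, and I would argue by contradiction that $k\ge2$ is impossible. Pick a H\"older $\psi$ with $\int\psi\,d\mu_{1}=0$ and $\int\psi\,d\mu_{i}>0$ for $2\le i\le k$ and set $p(t):=P_{f}(\phi-t\psi)$ for small $|t|$. Since $\phi-t\psi$ is again zooming with locally H\"older induced potential, Lemma~\ref{finite} makes $p$ a maximum of finitely many affine functions, hence convex and piecewise affine; the computation above gives $p(t)=P_{f}(\phi)$ on $[0,\varepsilon)$, whereas for $t\in(-\varepsilon,0)$ one has $h_{\mu_{i}}(f)+\int(\phi-t\psi)\,d\mu_{i}=P_{f}(\phi)+|t|\int\psi\,d\mu_{i}>P_{f}(\phi)$ for every $i\ge2$, so $p(t)>P_{f}(\phi)$ and $p$ has a genuine corner at $t=0$. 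On the other hand, local H\"older regularity of the induced potential lets one run the thermodynamic formalism on the associated countable Markov shift: the Gurevich pressure depends real-analytically on the locally H\"older potential, and the Abramov--Kac relation together with the implicit function theorem transfers this to real-analyticity of $t\mapsto p(t)$ near $0$. Since a real-analytic function constant on a subinterval is constant on the whole interval, this contradicts $p(t)>P_{f}(\phi)$ for $t<0$; hence $k=1$. The statement for the skew products of Subsection~\ref{skew} then follows via Lemma~\ref{lemaestadodeequlibri}.

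The step I expect to be the real obstacle is the regularity of $p$. Piecewise affineness from Lemma~\ref{finite} is entirely compatible with a corner, and stability (Theorem~\ref{A}), which gives only upper semicontinuity of the set of equilibrium states, does not rule one out either --- a potential carrying two ``symmetric'' ergodic equilibrium states would satisfy every hypothesis used so far. So the heart of the matter is the analyticity of the induced Gurevich pressure along H\"older directions, i.e.\ a spectral gap for the induced transfer operator on a suitable space, which ultimately requires the relevant inducing scheme to be topologically mixing (or, if it is only a finite union of mixing components, an argument showing that a single component realises the pressure throughout $[0,\varepsilon)$). In the mixing case the Ruelle--Perron--Frobenius theorem already produces the unique equilibrium state directly; the density-plus-stability argument above is then the way to route that conclusion through Theorems~\ref{deterministic} and~\ref{A} and to see that no equilibrium state is missed.
\end{proof}
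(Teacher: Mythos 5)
Your first half (density of uniqueness) is essentially the paper's own argument: you perturb $\phi$ by $-t\bigl(\psi-\int\psi\,d\mu_{1}\bigr)$ with $\psi$ chosen to separate the finitely many ergodic equilibrium states, observe that the value of the pressure functional at the distinguished measure is unchanged while the values at the other equilibrium measures drop strictly, and rule out the appearance of new equilibrium states for small $t$ (you do this through the fixed finite list of Lemma~\ref{finite}; the paper does it through stability, Theorem~\ref{A}, applied along $a_{n}\downarrow 0$). That part is fine.

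The genuine gap is in the second half, the passage from ``uniqueness is dense'' to ``the given $\phi$ has a unique equilibrium state'', and you yourself flag it: your plan is to show that $t\mapsto P_{f}(\phi-t\psi)$ cannot have a corner at $t=0$ because the induced Gurevich pressure should be real-analytic in $t$, but this analyticity is exactly what you do not prove. It requires a spectral gap for the induced transfer operator on the countable Markov shift (hence topological mixing of the inducing scheme, or an argument that a single mixing component realizes the pressure for all small $t\ge 0$), positive recurrence/finiteness of the induced pressure, integrability of the return time, liftability of all relevant equilibrium states, and an Abramov-type transfer back to $P_{f}$ --- none of which is established in the paper or in your sketch. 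As you note, piecewise affineness of $p$ and the upper semicontinuity coming from Theorem~\ref{A} are perfectly compatible with a corner, so without that analyticity the contradiction never materializes. The paper closes this step by a different, purely topological route: it builds a connected neighbourhood $\mathcal{U}$ of $\phi$ (cutting away the closures $\overline{\mathcal{U}_{\nu_{j}}}$ of the sets of potentials whose unique equilibrium state is one of the finitely many ergodic measures $\nu_{j}$ that are not equilibrium states of $\phi$), decomposes the potentials in $\mathcal{U}$ having uniqueness into the sets $\mathcal{U}_{i}$ according to which $\mu_{i}$ is the unique equilibrium state, uses stability to argue these sets are open and closed, and concludes by connectedness (together with the density just proved) that they all coincide, forcing $k=1$. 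So either supply the full inducing-scheme analyticity package (mixing, spectral gap, Abramov, liftability), which goes well beyond the tools the paper uses, or replace that step by a stability-plus-connectedness argument of the paper's type; as written, the uniqueness conclusion is not established.
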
 
		\begin{proof}
			We have finiteness of equilibrium states. Let $\mu_{1}, \dots, \mu_{k}$ be the ergodic equilibrium states.  We fix some $i \leq k$ and take a continuous potential $\psi_{i} : M \to \mathbb{R}$  such that $\int\psi_{i} d\mu_{i} < \int \psi_{i} d\mu_{j}, j \neq i$. Define for $a_{n} \downarrow 0$
			\[
			\varphi_{n}^{i}(x) = \phi(x) - a_{n}\bigg{(}\psi_{i}(x) - \int \psi_{i} d\mu_{i}\bigg{)}.
			\]
			Given $\eta \neq \mu_{j}$ for every $j \leq k$ it holds that $\eta$ is not an equilibrium state of $\varphi_{n}^{i}$ for infinitely many $n$. In fact, once $\eta \neq \mu_{j}$ if we could find $n_{1}, n_{2}, \dots$ such that $\eta$ is an equilibrium state of $\varphi_{n_{1}}^{i}, \varphi_{n_{2}}^{i}, \dots$, by stability we would have $\eta$ as an equilibrium state of $\phi$, which is a contradiction. Then, $\eta$ can only be an equilibrium state of $\varphi_{n}^{i}$ for infinitely many $n$ if $\eta = \mu_{j}$ for some $j \leq k$.
			
			If $\eta = \mu_{j}$ for some $j \neq i$, we obtain
			\[
			h_{\eta}(f) + \int \varphi_{n}^{i} d\eta = h_{\eta}(f) + \int \phi d\eta - a_{n}\int \bigg{(}\psi_{i}(x) - \int \psi_{i} d\mu_{i}\bigg{)} d \eta < 
			\]
			\[
			h_{\eta}(f) + \int \phi d\eta = h_{\mu_{i}}(f) + \int \phi d\mu_{i} = h_{\mu_{i}}(f) + \int \varphi_{n}^{i} d\mu_{i}.
			\]
			It implies that $\mu_{i}$ is the unique equilibrium state of $\varphi_{n}^{i}$ for infinitely many $n$. It shows that the set of potentials with uniqueness is dense.
			
			There exists a connected neighbourhood $\mathcal{U}$ of the potential $\phi$ such that every potential $\varphi \in \mathcal{U}$ with uniqueness has some of the measures  $\mu_{1}, \dots, \mu_{k}$ as its unique equilibrium state. Let $\mathcal{U}_{i}$ be the set of potentials in $\mathcal{U}$ with $\mu_{i}$ as the unique equilibrium state. By stability, we have that every  $\mathcal{U}_{i}$ is both open and closed. By connectness, we have that $\mathcal{U} = \mathcal{U}_{i}$ for every $i \leq k$. It means that the potential $\phi$ has uniqueness.
			
			We can construct the neighbourhood $\mathcal{U}$ as following. Let $\eta$ be an ergodic measure which is not an equilibrium state of $\phi$. Denote by $\mathcal{U}_{\eta}$ the set of continuous potentials such that $\eta$ is the unique equilibrium state. We have $\phi \not \in \overline{\mathcal{U}_{\eta}}$. By Lemma \ref{finite} there are finitely many equilibrium states at all $\mu_{1}, \dots, \mu_{k}, \nu_{1}, \dots, \nu_{p}$ such that $\nu_{j}$ is not equilibrium state of $\phi$ for every $j \leq p$. So, we can take $\mathcal{U} \subset \mathcal{V}$ where
			\[
		    \mathcal{V} = \bigg{(}\bigcup_{j=1}^{p} \overline{\mathcal{U}_{\nu_{j}}}\bigg{)}^{c}.
			\]
			The uniqueness is established and the lemma proved.
		\end{proof}

		\section{Applications}
		
		In this section, we give examples of  zooming systems. All the examples are given in \cite{S1} and we reproduce them here.
		
		\subsection{Viana maps} We recall the definition of the open class of maps with critical sets in dimension 2, introduced by M. Viana in \cite{V}. We skip the technical
		points. It can be generalized for any dimension (See \cite{A}).
		
		Let $a_{0} \in (1,2)$ be such that the critical point $x=0$ is pre-periodic for the quadratic map $Q(x)=a_{0} - x^{2}$. Let $S^{1}=\mathbb{R}/\mathbb{Z}$ and 
		$b:S^{1} \to \mathbb{R}$ a Morse function, for instance $b(\theta) = \sin(2\pi\theta)$. For fixed small $\alpha > 0$, consider the map
		\[
		\begin{array}{c}
			f_{0}: S^{1} \times \mathbb{R} \longrightarrow S^{1} \times \mathbb{R}\\
			\,\,\,\,\,\,\,\,\,\,\,\,\,\,\,\,\,\,\,\ (\theta,x) \longmapsto (g(\theta),q(\theta,x))
		\end{array}
		\] 
		where $g$ is the uniformly expanding map of the circle defined by $g(\theta)=d\theta
		(mod\mathbb{Z})$ for some $d \geq 16$, and $q(\theta,x) = a(\theta) - x^{2}$ with $a(\theta) = a_{0} + \alpha b(\theta)$. It is easy to check that for $\alpha > 0$ 
		small enough there is an interval $I \subset (-2,2)$ for which $f_{0}(S^{1} \times I)$ is contained in the interior of $S^{1} \times I$. Thus, any map $f$ sufficiently
		close to $f_{0}$ in the $C^{0}$ topology has $S^{1} \times I$ as a forward invariant region. We consider from here on these maps $f$ close to $f_{0}$ restricted to 
		$S^{1} \times I$. Taking into account the expression of $f_{0}$ it is not difficult to check that for $f_{0}$ (and any map $f$ close to $f_{0}$ in the $C^{2}$ topology)
		the critical set is non-degenerate.
		
		The main properties of $f$ in a $C^{3}$ neighbourhood of $f$ that we will use here are summarized below (See \cite{A},\cite{AV},\cite{Pi1}):
		
		\begin{enumerate}
			\item[(1)] $f$ is \textbf{\textit{non-uniformly expanding}}, that is, there exist $\lambda > 0$ and a Lebesgue full measure set $H \subset S^{1} \times I$ such that 
			for every point $p=(\theta, x) \in H$, the following holds
			\[
			\displaystyle \limsup_{n \to \infty} \frac{1}{n} \sum_{i=0}^{n-1} \log \parallel Df(f^{i}(p))^{-1}\parallel^{-1} < -\lambda.
			\]  
			\item[(2)] Its orbits have \textbf{\textit{slow approximation to the critical set}}, that is, for every $\epsilon > 0$ the exists $\delta > 0$ such that for every point
			$p=(\theta, x) \in H \subset S^{1} \times I$, the following holds 
			\[
			\displaystyle \limsup_{n \to \infty} \frac{1}{n} \sum_{i=0}^{n-1} - \log \text{dist}_{\delta}(p,\mathcal{C}) < \epsilon.
			\]  
			where 
			\[
			\text{dist}_{\delta}(p,\mathcal{C}) =  
			\left\{ 
			\begin{array}{ccc}
				dist(p,\mathcal{C}), & if & dist(p,\mathcal{C}) < \delta\\
				1 & if & dist(p,\mathcal{C}) \geq \delta 
			\end{array}
			\right.
			\]  
			\item[(3)] $f$ is topologically mixing;
			
			\item[(4)] $f$ is strongly topologically transitive;
			
			\item[(5)] it has a unique ergodic absolutely continuous invariant (thus SRB) measure;
			
			\item[(6)]the density of the SRB measure varies continuously in the $L^{1}$ norm with $f$.
		\end{enumerate}
		
		\begin{remark}
			We observe that this definition of non-uniformly expansion is included in ours by neighbourhoods.
		\end{remark}
		
		\subsection{Benedicks-Carleson Maps} We study a class of non-hyperbolic maps of the interval with the condition of exponential growth of the derivative at critical values, called 
		\textbf{\textit{Collet-Eckmann Condition}}. We also ask the map to be $C^{2}$ and topologically mixing and the critical points to have critical order 
		$2 \leq \alpha < \infty$.
		
		Given a critical point $c \in I$, the \textbf{\textit{critical order}} of $c$ is a number $\alpha_{c} > 0$ such that 
		$f(x) = f(c) \pm |g_{c}(x)| ^{\alpha_{c}}, \,\, \text{for all} \, \, x \in \mathcal{U}_{c}$ where $g_{c}$ is a diffeomorphism 
		$g_{c}: \mathcal{U}_{c} \to g(\mathcal{U}_{c})$ and $\mathcal{U}_{c}$ is a neighbourhood of $c$. 
		
		Let $\delta>0$ and denote $\mathcal{C}$ the set of critical points and $\displaystyle B_{\delta} = \cup_{c \in \mathcal{C}} (c - \delta, c + \delta)$. 
		Given $x \in I$, we suppose that
		
		\begin{itemize}
			\item \textbf{(Expansion outside $B_{\delta}$)}.  There exists $\kappa > 1 $ and $\beta > 0$ such that, if $x_{k} = f^{k}(x) \not \in B_{\delta}, \,\, 0 \leq k \leq n-1$ then $|Df^{n}(x)| \geq \kappa \delta^{(\alpha_{\max} -1)}e^{\beta n}$, where $\alpha_{\max} = \max \{\alpha_{c}, c \in \mathcal{C}\}$. Moreover, if $x_{0} \in f(B_{\delta})$ or $x_{n} \in B_{\delta}$ then $|Df^{n}(x)| \geq \kappa e^{\beta n}$.
			
			\item \textbf{(Collet-Eckmann Condition)}. There exists $\lambda > 0$ such that 
			\[
			|Df^{n}(f(c))| \geq e^{\lambda n}.
			\]
			
			\item \textbf{(Slow Recurrence to $\mathcal{C}$)}. There exists $\sigma \in (0, \lambda/5)$ such that 
			\[
			dist(f^{k}(x), \mathcal{C}) \geq e^{-\sigma k}.
			\]
		\end{itemize}
		
		\subsection{Rovella Maps}
		
		There is a class of non-uniformly expanding maps known as \textbf{\textit{Rovella Maps}}. They are derived from the so-called \textit{Rovella Attractor},
		a variation of the \textit{Lorenz Attractor}. We proceed with a brief presentation. See \cite{AS} for details.
		
		\subsubsection{Contracting Lorenz Attractor}
		
		The geometric Lorenz attractor is the first example of a robust attractor for a flow containing a hyperbolic singularity. The attractor is a transitive maximal invariant
		set for a flow in three-dimensional space induced by a vector field having a singularity at the origin for which the derivative of the vector field at the singularity has
		real eigenvalues $\lambda_{2} < \lambda_{3} < 0 < \lambda_{1}$ with $\lambda_{1} + \lambda_{3} > 0$. The singularity is accumulated by regular orbits which prevent the 
		attractor from being hyperbolic.
		
		The geometric construction of the contracting Lorenz attractor (Rovella attractor) is the same as the geometric Lorenz attractor. The only difference is the condition
		(A1)(i) below that gives in particular $\lambda_{1} + \lambda_{3} < 0$. The initial smooth vector field $X_{0}$ in $\mathbb{R}^{3}$ has the following properties:
		
		\begin{itemize}
			
			\item[(A1)] $X_{0}$ has a singularity at $0$ for which the eigenvalues $\lambda_{1},\lambda_{2},\lambda_{3} \in \mathbb{R}$ of $DX_{0}(0)$ satisfy:
			\begin{itemize}
				
				\item[(i)] $0 < \lambda_{1} < -\lambda_{3}  < -\lambda_{2}$,
				
				\item[(ii)] $r > s+3$, where $r=-\lambda_{2}/\lambda_{1}, s=-\lambda_{3}/\lambda_{1}$;
			\end{itemize}
			
			\item[(A2)] there is an open set $U \subset \mathbb{R}^{3}$, which is forward invariant under the flow, containing the cube
			$\{(x,y,z) : \mid x \mid \leq 1, \mid y \mid \leq 1, \mid x \mid \leq 1\}$ and supporting the \textit{Rovella attractor}
			\[
			\displaystyle \Lambda_{0} = \bigcap_{t \geq 0} X_{0}^{t}(U).
			\]
			
			The top of the cube is a Poincar\'e section foliated by stable lines $\{x = \text{const}\} \cap \Sigma$ which are invariant under Poincar\'e first return map $P_{0}$.
			The invariance of this foliation uniquely defines a one-dimensional map $f_{0} : I \backslash \{0\} \to I$ for which
			\[
			f_{0} \circ \pi = \pi \circ P_{0},
			\]
			where $I$ is the interval $[-1,1]$ and $\pi$ is the canonical projection $(x,y,z) \mapsto x$;
			
			\item[(A3)] there is a small number $\rho >0$ such that the contraction along the invariant foliation of lines $x =$const in $U$ is stronger than $\rho$.
		\end{itemize}
		
		See \cite{AS} for properties of the map $f_{0}$.
		
		\subsubsection{Rovella Parameters}
		
		The Rovella attractor is not robust. However, the chaotic attractor persists in a measure theoretical sense: there exists a one-parameter family of positive Lebesgue measure
		of $C^{3}$ close vector fields to $X_{0}$ which have a transitive non-hyperbolic attractor. In the proof of that result, Rovella showed that there is a set of parameters
		$E \subset (0,a_{0})$ (that we call \textit{Rovella parameters}) with $a_{0}$ close to $0$ and $0$ a full density point of $E$, i.e.
		\[
		\displaystyle \lim_{a \to 0} \frac{\mid E \cap (0,a) \mid}{a} = 1,
		\]
		such that:
		
		\begin{itemize}
			\item[(C1)] there is $K_{1}, K_{2} > 0$ such that for all $a \in E$ and $x \in I$
			\[
			K_{2} \mid x \mid^{s-1} \leq f_{a}'(x) \leq K_{1} \mid x \mid^{s-1},
			\]
			where $s=s(a)$. To simplify, we shall assume $s$ fixed.
			
			\item[(C2)] there is $\lambda_{c} > 1$ such that for all $a \in E$, the points $1$ and $-1$ have \textit{Lyapunov exponents} greater than $\lambda_{c}$:
			\[
			(f_{a}^{n})'(\pm 1) > \lambda_{c}^{n}, \,\, \text{for all} \, \, n \geq 0;
			\]
			
			\item[(C3)] there is $\alpha > 0$ such that for all $a \in E$ the \textit{basic assumption} holds:
			\[
			\mid f_{a}^{n-1}(\pm 1)\mid > e^{-\alpha n}, \,\, \text{for all} \, \, n \geq 1;
			\]
			
			\item[(C4)] the forward orbits of the points $\pm 1$ under $f_{a}$ are dense in $[-1,1]$ for all $a \in E$.
		\end{itemize}
		
		\begin{definition}
			We say that a map $f_{a}$ with $a \in E$ is a \textbf{\textit{Rovella Map}}. 
		\end{definition}
		
		\begin{theorem}
			(Alves-Soufi \cite{AS}) Every Rovella map is non-uniformly expanding. 
		\end{theorem}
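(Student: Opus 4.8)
The plan is to reconstruct, following Alves and Soufi \cite{AS}, the Benedicks--Carleson--Nowicki--van Strien binding estimates for the one-dimensional map $f_a$, $a\in E$, and read off non-uniform expansion from them. I would first record the structural consequences of the Rovella conditions. By (C1) the singular point $0$ plays the role of a non-flat critical point of order $s=-\lambda_3/\lambda_1>1$ (one has $s>1$ precisely because $\lambda_1<-\lambda_3$ in (A1)), with $f_a'(x)\asymp|x|^{s-1}$ near $0$; in particular $f_a$ restricted to either branch has bounded distortion on intervals that stay away from $0$, and its critical values are the one-sided limits $\pm1$ appearing in (C2)--(C3). Away from $0$ the map is smooth, the points $\pm1$ have positive Lyapunov exponent by (C2) and $f_a$ is transitive by (C4), so all periodic orbits are hyperbolic repelling; by the standard expansion-away-from-the-critical-set lemma for smooth interval maps with a non-flat critical point and only repelling periodic orbits, for every small $\delta>0$ there are constants $C_\delta>0$, $\sigma_\delta>1$ with $|Df_a^{k}(x)|\ge C_\delta\sigma_\delta^{k}$ whenever $x,f_a(x),\dots,f_a^{k-1}(x)\notin(-\delta,\delta)$. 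The goal is then to deduce that for Lebesgue-a.e.\ $x$ one has both $\liminf_{n}\frac{1}{n}\sum_{i=0}^{n-1}\log|f_a'(f_a^{i}(x))|\ge\lambda>0$ and slow approximation to $\mathcal{C}=\{0\}$, which is exactly non-uniform expansion in the sense of conditions (1)--(2) of the Viana subsection.

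Second, fix $x$ whose orbit never meets $0$ (a full Lebesgue measure set, since $\bigcup_{n\ge0}f_a^{-n}(\{0\})$ is countable) and split the iterates $0\le i<n$ into \emph{return} times, at which $f_a^{i}(x)\in(-\delta,\delta)$, each followed by a \emph{binding period} and then a free period lasting until the next return. During the binding period started at a return to distance $d=|f_a^{i}(x)|$ from $0$, the orbit of $f_a^{i+1}(x)$ shadows the orbit of $1$ or of $-1$; combining the Collet--Eckmann growth (C2), the basic assumption (C3) and the non-flatness (C1) with bounded distortion along the shadowing branch shows that the binding period has length comparable to $\log(1/d)$ and that the derivative accumulated over the return step together with the ensuing binding period stays above a uniform positive constant, so that the loss $-\log|f_a'(f_a^{i}(x))|\asymp(s-1)\log(1/d)$ at a return is more than offset by the exponential gain along the shadowed critical orbit. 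Concatenating over all returns before time $n$ and using $\sigma_\delta>1$ on the free stretches yields an estimate of the form
\[
\log|Df_a^{n}(x)|\ \ge\ cn\;-\;C\sum_{i=0}^{n-1}\bigl(-\log\text{dist}_{\delta}(f_a^{i}(x),0)\bigr),
\]
with $c,C>0$ independent of $x$ and $n$, so the problem reduces to controlling the recurrence of $x$ to $0$.

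Third, and this is the step I expect to be the main obstacle, I would establish the slow-recurrence bound: for Lebesgue-a.e.\ $x$,
\[
\limsup_{n\to\infty}\frac{1}{n}\sum_{i=0}^{n-1}\bigl(-\log\text{dist}_{\delta}(f_a^{i}(x),0)\bigr)\ \le\ \varepsilon(\delta),\qquad\text{where }\varepsilon(\delta)\to0\text{ as }\delta\to0.
\]
The mechanism is a measure estimate: using bounded distortion to transfer the dynamics on the relevant cylinders to the critical branch, one bounds the Lebesgue measure of the set of points that make a ``deep'' return (closer than $e^{-\varepsilon m}$ to $0$) at time $m$; the non-flat order $s$ governs the size of the preimages of $(-e^{-\varepsilon m},e^{-\varepsilon m})$, and this measure comes out exponentially small in $m$. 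A Borel--Cantelli argument then shows a.e.\ $x$ eventually makes only shallow returns, which gives the displayed bound. This is where the Rovella parameter conditions enter most delicately and where one must carry out careful bounded-distortion bookkeeping along orbit segments of variable length.

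Finally I would combine the three steps. Choosing $\delta$ so small that $C\varepsilon(\delta)<c/2$, the set $H$ on which both the concatenation estimate and the slow-recurrence bound hold has full Lebesgue measure, and on $H$,
\[
\liminf_{n\to\infty}\frac{1}{n}\sum_{i=0}^{n-1}\log|f_a'(f_a^{i}(x))|\ \ge\ c-C\,\varepsilon(\delta)\ >\ \tfrac{c}{2}=:\lambda>0,
\]
while the same slow-recurrence bound is precisely the slow-approximation condition (2) to $\mathcal{C}=\{0\}$. Hence $f_a$ is non-uniformly expanding, and, as noted in the Remark following the Viana subsection, this implies the zooming property used throughout the paper. (An equivalent route builds an induced full-branch Markov map with exponentially small tails for the return time out of the same estimates and reads off non-uniform expansion from it.)
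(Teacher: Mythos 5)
You should know at the outset that the paper does not prove this statement at all: it is quoted verbatim from Alves--Soufi \cite{AS}, and the Applications section only records it as an external input. So there is no internal proof to compare with; the relevant comparison is with the argument in \cite{AS}, and your outline does follow the same general route used there (and in the Benedicks--Carleson/Freitas tradition it builds on): uniform expansion outside a neighbourhood of the singular point, binding periods governed by (C1)--(C3), and a measure/Borel--Cantelli estimate giving, for Lebesgue-a.e.\ point, both positive Lyapunov exponent and slow approximation to $\mathcal{C}=\{0\}$.

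That said, as a proof your proposal has genuine gaps. First, your justification of expansion outside $(-\delta,\delta)$ is not valid as stated: the Rovella one-dimensional map is singular (indeed discontinuous, with one-sided limits $\pm1$) at $0$, so a Ma\~n\'e-type theorem for smooth interval maps does not apply, and the claim that (C2) plus transitivity (C4) forces all periodic orbits to be hyperbolic repelling is not established (no negative-Schwarzian or smoothness hypothesis is available to run that argument). In \cite{AS} and in Rovella's original work this expansion lemma is obtained differently, from the Misiurewicz-like structure of the unperturbed map $f_0$ together with (C2), by a perturbation/compactness argument; without some such input your first step is unsupported, and everything downstream (the bookkeeping constant $c>0$ in your concatenation estimate) depends on it. Second, the slow-recurrence step, which you yourself flag as the main obstacle, is only asserted: the exponential bound on the measure of points making a deep return at time $m$ requires uniform bounded-distortion estimates along orbit segments of variable length, and those estimates are themselves produced by the binding-period machinery, so the logical order (expansion estimates $\Rightarrow$ distortion $\Rightarrow$ measure of deep returns $\Rightarrow$ slow recurrence $\Rightarrow$ a.e.\ positive exponent) has to be set up carefully rather than assumed; in particular the inequality $\varepsilon(\delta)\to0$ as $\delta\to0$, which you need to beat the loss term $C\sum_{i}\bigl(-\log \mathrm{dist}_{\delta}(f_a^{i}(x),0)\bigr)$, is exactly the technical core of the cited proof and is not derived here. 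As a reconstruction plan the skeleton is right, but these two steps are where the actual content of the Alves--Soufi theorem lives.
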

		
		\subsection{Hyperbolic Times}
		
		The idea of hyperbolic times is a key notion on the study of non-uniformly hyperbolic dynamics and it was introduced by Alves et al. 
		This is powerful to get expansion in the context of non-uniform expansion. Here, we recall the basic definitions and results on hyperbolic times that we will use later on. 
		We will see that this notion is an example of a Zooming Time. 
		
		In the following, we give definitions taken from \cite{A} and \cite{Pi1}.
		
		\begin{definition}
			Let $M$ be a compact Riemannian manifold of dimension $d \geq 1$ and $f:M \to M$ a continuous map defined on $M$.
			The map $f$ is called \textbf{\textit{non-flat}} if it is a local $C^{1 + \alpha}, (\alpha >0)$ diffeomorphism in the whole manifold except in a 
			non-degenerate set $\mathcal{C} \subset M$. We say that $M \neq \mathcal{C} \subset M$ is a \textbf{\textit{non-degenerate set}}
			if there exist $\beta, B > 0$ such that the following two conditions hold.
			
			\begin{itemize}
				\item $\frac{1}{B} d(x,\mathcal{C})^{\beta} \leq \frac{\parallel Df(x) v\parallel}{\parallel v \parallel} \leq B d(x,\mathcal{C})^{-\beta}$ for all $v \in T_{x}M$, for every $x \in M\backslash\mathcal{C}$.
				
				For every $x, y \in M \backslash \mathcal{C}$ with $d(x,y) < d(x,\mathcal{C})/2$ we have
				
				\item $\mid \log \parallel Df(x)^{-1} \parallel - \log \parallel Df(y)^{-1} \parallel \mid \leq \frac{B}{d(x,\mathcal{C})^{\beta}} d(x,y)$.
			\end{itemize}
			
		\end{definition}
		
		In the following, we give the definition of a hyperbolic time \cite{ALP2}, \cite{Pi1}.
		
		\begin{definition}
			(Hyperbolic times). Let us fix $0 < b = \frac{1}{3} \min\{1,1 \slash \beta\} < \frac{1}{2} \min\{1,1\slash \beta\}$. 
			Given $0 < \sigma < 1$ and $\epsilon > 0$, we will say that $n$ is a $(\sigma, \epsilon)$\textbf{\textit{-hyperbolic time}} for a point $x \in M$ 
			(with respect to the non-flat map $f$ with a $\beta$-non-degenerate critical/singular set $\mathcal{C})$ if for all $1 \leq k \leq n$ we have 
			\[
			\prod_{j=n-k}^{n-1} \|(Df \circ f^{j}(x))^{-1}\| \leq \sigma^{k} \,\, \text{and} \,\, dist_{\epsilon}(f^{n-k}(x), \mathcal{C}) \geq \sigma^{bk}.
			\]
			where
			\[
			\text{dist}_{\epsilon}(p,\mathcal{C}) =  
			\left\{ 
			\begin{array}{ccc}
				dist(p,\mathcal{C}), & if & dist(p,\mathcal{C}) < \epsilon\\
				1 & if & dist(p,\mathcal{C}) \geq \epsilon. 
			\end{array}
			\right.
			\]
			We denote de set of points of $M$ such that $n \in \mathbb{N}$ is a $(\sigma,\epsilon)$-hyperbolic time by $H_{n}(\sigma,\epsilon,f)$.
		\end{definition}
		
		\begin{proposition}
			(Positive frequence). Given $\lambda > 0$ there exist $\theta > 0$ and $\epsilon_{0} > 0$ such that, for every $x \in M$ and $\epsilon \in (0,\epsilon_{0}]$,
			\[
			\#\{1 \leq j \leq n mid \,\, x \in H_{j}(e^{-\lambda \slash 4}, \epsilon, f) \} \geq \theta n,
			\]
			whenever $\frac{1}{n}\sum_{i=0}^{n-1}\log\|(Df(f^{i}(x)))^{-1}\|^{-1} \geq \lambda$ and $\frac{1}{n}\sum_{i=0}^{n-1}-\log dist_{\epsilon}(x, \mathcal{C}) \leq \frac{\lambda}{16 \beta}$.
		\end{proposition}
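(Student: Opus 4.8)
The engine of the proof is the Pliss Lemma: if $A \ge c_2 > c_1$ and $a_1,\dots,a_N \in \mathbb{R}$ satisfy $a_j \le A$ for every $j$ and $\sum_{j=1}^{N} a_j \ge c_2 N$, then there are at least $\frac{c_2-c_1}{A-c_1}\,N$ indices $1 \le n_1 < \cdots < n_\ell \le N$ with $\sum_{j=n+1}^{n_i} a_j \ge c_1(n_i-n)$ for all $0 \le n < n_i$. The plan is to feed it a sequence that simultaneously controls the two quantities appearing in the hypotheses, and then read off, on the large set of indices Pliss produces, the two defining inequalities of a hyperbolic time.

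First I would fix $\epsilon_0$ small and record, from the non-degeneracy of $\mathcal{C}$, a uniform bound $L<\infty$ (depending only on $f$, $\mathcal{C}$ and $\epsilon_0$) such that
\[
a_j := \log\|(Df(f^{j}(x)))^{-1}\|^{-1} + \beta\log\text{dist}_{\epsilon}(f^{j}(x),\mathcal{C}) \ \le\ L \qquad \text{for all } j,\ \epsilon \le \epsilon_0;
\]
indeed, at indices $j$ with $\text{dist}(f^{j}(x),\mathcal{C}) < \epsilon$ the lower bound $\|(Df(y))^{-1}\|^{-1} \ge B^{-1}\text{dist}(y,\mathcal{C})^{\beta}$ cancels the nonpositive term $\beta\log\text{dist}_{\epsilon}(f^j(x),\mathcal{C})$ up to the additive constant $\log B$, while away from $\mathcal{C}$ the map is a genuine $C^{1}$ diffeomorphism on a compact set. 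The two hypotheses then give $\frac{1}{n}\sum_{j=0}^{n-1} a_j \ge \lambda - \beta\cdot\frac{\lambda}{16\beta} = \frac{15\lambda}{16}$. Applying Pliss with $c_1 = \lambda/4$ — forced by the target contraction $\sigma = e^{-\lambda/4}$ — and $c_2 = \lambda/2$ yields a set $G \subset \{1,\dots,n\}$ with $\#G \ge \theta_1 n$, $\theta_1 = \frac{\lambda/4}{L-\lambda/4}$, such that for every $m \in G$ and every $1 \le k \le m$,
\[
\sum_{j=m-k}^{m-1}\Big(\log\|(Df(f^{j}(x)))^{-1}\|^{-1} + \beta\log\text{dist}_{\epsilon}(f^{j}(x),\mathcal{C})\Big) \ \ge\ \frac{\lambda}{4}\,k .
\]
Discarding the nonpositive distance terms gives at once $\prod_{j=m-k}^{m-1}\|(Df\circ f^{j}(x))^{-1}\| \le e^{-\lambda k/4} = \sigma^{k}$, the expansion half of the definition of a $(\sigma,\epsilon)$-hyperbolic time.

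For the slow-recurrence half, $\text{dist}_{\epsilon}(f^{m-k}(x),\mathcal{C}) \ge \sigma^{bk}$, I would show that the indices where it can fail form a sparse set. Call a pair $(i,m)$ with $i<m$ \emph{bad} if $\text{dist}_{\epsilon}(f^{i}(x),\mathcal{C}) < e^{-b\lambda(m-i)/4}$, equivalently $m-i < \tfrac{4}{b\lambda}\big(-\log\text{dist}_{\epsilon}(f^{i}(x),\mathcal{C})\big)$; then each $i$ is the left endpoint of strictly fewer than $\tfrac{4}{b\lambda}\big(-\log\text{dist}_{\epsilon}(f^{i}(x),\mathcal{C})\big)$ bad pairs, so, summing over $i<n$ and invoking the recurrence hypothesis,
\[
\#\{\,m : \exists\, i<m,\ (i,m)\ \text{bad}\,\} \ \le\ \frac{4}{b\lambda}\sum_{i=0}^{n-1}\big(-\log\text{dist}_{\epsilon}(f^{i}(x),\mathcal{C})\big) \ \le\ \frac{4}{b\lambda}\cdot\frac{\lambda n}{16\beta} \ =\ \frac{n}{4b\beta}.
\]
Every $m\in G$ that is the right endpoint of no bad pair satisfies both inequalities of a hyperbolic time for all $1\le k\le m$, hence is a genuine $(\sigma,\epsilon)$-hyperbolic time; so the set of such $m$ has at least $\big(\theta_1-\tfrac{1}{4b\beta}\big)n$ elements, and taking $\theta$ to be this difference finishes the proof.

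The crux is the positivity of $\theta_1-\tfrac{1}{4b\beta}$. Both terms are constants fixed by $f$, $\mathcal{C}$, $\beta$ and $\lambda$: one cannot move $c_1$ off $\lambda/4$ without spoiling the prescribed rate $\sigma=e^{-\lambda/4}$, and $b=\tfrac13\min\{1,1/\beta\}$ is pinned by the non-degeneracy exponent, so the whole argument rests on the factor $16\beta$ in the recurrence hypothesis (together with choosing $\epsilon_0$ small enough to make the two hypotheses compatible and to keep $L$ under control) being generous enough to leave a positive gap. This constant-chasing, rather than the Pliss machinery itself, is the only real work, and it is exactly why the statement must carry these explicit constants and why $\theta$ and $\epsilon_0$ are allowed to depend on $\lambda$ and $f$.
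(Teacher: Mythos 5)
The paper does not prove this proposition at all: it is recalled verbatim from the hyperbolic-times literature (\cite{A}, \cite{ALP2}, \cite{Pi1}), so your attempt has to be judged on its own. Your overall strategy (Pliss's lemma applied to an expansion sequence penalized by $\beta\log\mathrm{dist}_\epsilon$, plus a counting of ``deep returns'' to handle the recurrence condition) is indeed the standard one, and the two halves work separately: Pliss times of your sequence do satisfy the contraction estimate, and your bad-pair count correctly bounds the set of times at which the recurrence condition $\mathrm{dist}_\epsilon(f^{m-k}(x),\mathcal{C})\ge\sigma^{bk}$ can fail by $\tfrac{n}{4b\beta}$.

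The genuine gap is exactly the point you defer to ``constant-chasing'': the final bound $\theta=\theta_1-\tfrac{1}{4b\beta}$ is not positive, and no choice of $\epsilon_0$ fixes this. With the paper's normalization $b=\tfrac13\min\{1,1/\beta\}$ one has $4b\beta\le\tfrac43$, hence $\tfrac{1}{4b\beta}\ge\tfrac34$; on the other hand $\theta_1=\tfrac{\lambda/4}{L-\lambda/4}$ exceeds $\tfrac34$ only if $L<\tfrac{7\lambda}{12}$, which is impossible since $L$ is at least of the order of $\log B$ and of $\sup\log\|(Df)^{-1}\|^{-1}$, quantities unrelated to $\lambda$ (and $\lambda$ may be small). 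So the union bound ``(density of Pliss times) minus (density of spoiled times)'' is structurally too lossy for the stated constants; the known proofs make the intersection argument work by requiring the recurrence average to be small compared with $b\lambda\,\theta_1$ (i.e.\ small relative to $\sup\log\|Df\|$), not by relying on the bare factor $16\beta$, so your claim that this factor ``is generous enough to leave a positive gap'' is false as written and would need a genuinely different mechanism to repair. A secondary problem: your uniform bound $a_j\le L$ with $L$ depending only on $\epsilon_0$ is unjustified in the stated generality, because the non-degeneracy condition allows $\|Df(y)\|\le B\,d(y,\mathcal{C})^{-\beta}$ to blow up near $\mathcal{C}$; at indices with $d(f^j(x),\mathcal{C})\in[\epsilon,\epsilon_0)$ the penalty term vanishes (there $\mathrm{dist}_\epsilon\equiv1$) and the only available bound is $\log B+\beta|\log\epsilon|$, so $L=L(\epsilon)$ and your $\theta_1$ degenerates as $\epsilon\to0$; this is harmless for the paper's concrete examples (Viana, Benedicks--Carleson, Rovella maps, where $\|Df\|$ is bounded) but not for a general non-flat map with singularities, which is the setting in which the proposition is stated.
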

		
		Denote by $\mathcal{H}$ the set of point $x \in M$ such that
		\[
		\displaystyle \limsup_{n \to \infty} \frac{1}{n} \sum_{i=0}^{n-1} \log \parallel Df(f^{i}(p))^{-1}\parallel^{-1} < -\lambda.
		\] 
		and
		\[
		\displaystyle \limsup_{n \to \infty} \frac{1}{n} \sum_{i=0}^{n-1} - \log \text{dist}_{\delta}(p,\mathcal{C}) < \epsilon.
		\]
		If $f$ is non-uniformly expanding, it follows from the proposition that the points of $\mathcal{H}$ have infinitely many moments with positive frequency of hyperbolic times. In particular, they have infinitely many hyperbolic times.
		
		The following proposition shows that the hyperbolic times are indeed zooming times, where the zooming contraction is $\alpha_{k}(r) = \sigma^{k/2}r$.
		
		\begin{proposition}
			Given $\sigma \in (0,1)$ and $\epsilon > 0$, there is $\delta,\rho > 0$, depending only on $\sigma$ and $\epsilon$ and on the map $f$, such that if $x \in H_{n}(\sigma,\epsilon,f)$ then there exists a neighbourhood $V_{n}(x)$ of $x$ with the following properties:
			
			\begin{enumerate}
				\item[(1)] $f^{n}$ maps $\overline{V_{n}(x)}$ diffeomorphically onto the ball $\overline{B_{\delta}(f^{n}(x))}$;
				\item[(2)] $dist(f^{n-j}(y),f^{n-j}(z)) \leq \sigma^{j\slash 2} dist(f^{n}(y), f^{n}(z)), \text{for all} \, \, y,z \in V_ {n}(x)$ and $1 \leq j < n$.
				\item[(3)]$\log \frac{\mid \det Df^{n}(y)\mid}{\mid \det Df^{n}(z)\mid} \leq \rho d(f^{n}(y),f^{n}(z))$.
			\end{enumerate}
			
			for all $y,z \in V_{n}(x)$.
		\end{proposition}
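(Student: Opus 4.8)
\section*{Proof proposal}

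The statement is the classical fact that every $(\sigma,\epsilon)$--hyperbolic time is an $(\alpha,\delta)$--zooming time for the zooming contraction $\alpha_k(r)=\sigma^{k/2}r$, and the plan is to reproduce the inverse--branch pull--back construction of Alves--Bonatti--Viana (see \cite{A}, \cite{AV}, \cite{Pi1}). Fix $x$ with $n\in H_{n}(\sigma,\epsilon,f)$ and write $x_i=f^{i}(x)$. I would build $V_n(x)$ as the last term of a chain of pull--backs: set $V^{0}=\overline{B_\delta(x_n)}$ for a $\delta>0$ to be fixed, depending only on $\sigma$, $\epsilon$ and $f$ (that is, on the constants $B,\beta$ of the non--degenerate set $\mathcal C$, on $b$, and on $\dim M$), and for $k=1,\dots,n$ let $V^{k}$ be the connected component of $f^{-1}(V^{k-1})$ containing $x_{n-k}$. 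Since each $V^{k-1}$ is diffeomorphic to a ball and $x_{n-k}\notin\mathcal C$, the map $f$ restricts to a diffeomorphism $V^{k}\to V^{k-1}$, so $f^{k}\colon V^{k}\to\overline{B_\delta(x_n)}$ is a diffeomorphism; setting $V_n(x)=V^{n}$ then gives property~(1), and everything reduces to the diameter and distortion estimates for the chain.

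The three inputs are as follows. (a) \emph{Slow recurrence keeps the chain away from $\mathcal C$:} the clause $\operatorname{dist}_\epsilon(x_{n-k},\mathcal C)\ge\sigma^{bk}$ gives $d(x_{n-k},\mathcal C)\ge\min\{\epsilon,\sigma^{bk}\}$, and since $b<\tfrac12$ a bound of the shape $\operatorname{diam}(V^{k})\le\sigma^{k/2}\cdot 2\delta$ forces $\operatorname{diam}(V^{k})<\tfrac12 d(x_{n-k},\mathcal C)$ once $\delta$ is small, which is what makes the inverse branch at $x_{n-k}$ available on all of $V^{k-1}$. (b) \emph{Non--flatness controls one step of distortion:} for $w\in V^{k}\subset B(x_{n-k},\tfrac12 d(x_{n-k},\mathcal C))$,
\[
\bigl|\log\|Df(w)^{-1}\|-\log\|Df(x_{n-k})^{-1}\|\bigr|\le \frac{B}{d(x_{n-k},\mathcal C)^{\beta}}\,d(w,x_{n-k})\le B\,\sigma^{-\beta b k}\operatorname{diam}(V^{k}),
\]
so $\sup_{V^{k}}\|Df(\cdot)^{-1}\|\le\|Df(x_{n-k})^{-1}\|\,e^{\rho_k}$ with $\rho_k\le B\,\sigma^{-\beta b k}\operatorname{diam}(V^{k})$, and a similar inequality holds for $\log|\det Df|$ (a standard consequence of non--degeneracy, with a constant $B'$ of the same nature). (c) \emph{The hyperbolic--time derivative clause controls the product:} $\prod_{j=n-k}^{n-1}\|Df(x_j)^{-1}\|\le\sigma^{k}$.

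Combining these, I would prove by induction on $k$ that $\operatorname{diam}(f^{j}(V^{k}))\le\sigma^{(k-j)/2}\,2\delta$ for $0\le j\le k$. The cases $j\ge1$ follow from the inductive hypothesis applied to $V^{k-1}$, since $f^{j}(V^{k})=f^{j-1}(V^{k-1})$; the case $j=0$ comes from pulling $B_\delta(x_n)$ back $k$ steps and using (b) and (c):
\[
\operatorname{diam}(V^{k})\le\Bigl(\prod_{m=1}^{k}\sup_{V^{m}}\|Df(\cdot)^{-1}\|\Bigr)2\delta\le\sigma^{k}\,e^{\sum_{m=1}^{k}\rho_m}\,2\delta.
\]
The point is that, because $b$ was chosen with $\beta b\le\tfrac13<\tfrac12$, feeding the inductive diameter bounds into the $\rho_m$ turns $\sum_m\rho_m$ into a convergent geometric series bounded by some $C_1\delta$, so $\operatorname{diam}(V^{k})\le 2\delta\,e^{C_1\delta}\,\sigma^{k}$; the gain of a whole factor $\sigma^{k/2}$ over the target leaves room, for $\delta$ small enough and $C_1\delta\le\tfrac12\log(1/\sigma)$, to close the induction. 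Property~(2) comes out of the same computation applied to a pair $y,z\in V^{n}$ instead of to the diameter:
\[
d(f^{n-k}(y),f^{n-k}(z))\le\Bigl(\prod_{m=1}^{k}\sup_{V^{m}}\|Df(\cdot)^{-1}\|\Bigr)d(f^{n}(y),f^{n}(z))\le\sigma^{k}e^{C_1\delta}d(f^{n}(y),f^{n}(z))\le\sigma^{k/2}d(f^{n}(y),f^{n}(z)).
\]

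For the bounded--distortion estimate~(3) I would expand $\log\frac{|\det Df^{n}(y)|}{|\det Df^{n}(z)|}=\sum_{i=0}^{n-1}\bigl(\log|\det Df(f^{i}(y))|-\log|\det Df(f^{i}(z))|\bigr)$, bound the $i$--th term by $\frac{B'}{d(x_i,\mathcal C)^{\beta}}d(f^{i}(y),f^{i}(z))$ using the determinant form of the non--flatness estimate, insert $d(f^{i}(y),f^{i}(z))\le\sigma^{(n-i)/2}d(f^{n}(y),f^{n}(z))$ from~(2) and $d(x_i,\mathcal C)^{-\beta}\le\max\{\epsilon^{-\beta},\sigma^{-\beta b(n-i)}\}$ from slow recurrence, and sum the resulting geometric series --- convergent once more precisely because $\tfrac12-\beta b>0$ --- to get $\log\frac{|\det Df^{n}(y)|}{|\det Df^{n}(z)|}\le\rho\,d(f^{n}(y),f^{n}(z))$ with $\rho$ depending only on $\sigma$, $\epsilon$ and $f$. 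I expect the main obstacle to be exactly the self--consistency of the diameter control: bounding $\rho_k$ needs an a priori estimate on $\operatorname{diam}(V^{k})$, which is what the product estimate is producing, so the estimates must be ordered carefully --- first a crude containment of $V^{k}$ coming from the construction, then the sharp geometric bound --- and the single constant $\delta$, hence $\rho$, must be chosen to meet all the constraints simultaneously; the whole scheme works thanks to the budget created by $\beta b<\tfrac12$.
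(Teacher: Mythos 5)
The paper does not prove this proposition at all: it is stated as a recalled result from the literature (taken from \cite{A}, \cite{AV}, \cite{ALP2}, \cite{Pi1}), and your proposal is precisely the standard inverse-branch pull-back argument of those references, including the correct use of $\beta b<\tfrac12$ to sum the distortion corrections and the choice of $\delta$ with $e^{C_1\delta}\le\sigma^{-1/2}$. Your sketch is essentially correct and matches the argument the paper implicitly relies on; the only point to handle with care in a full write-up is the one you already flag, namely setting up the induction so that the existence of the inverse branch at step $k$ (staying in the region $d(\cdot,\mathcal C)\ge\tfrac12 d(x_{n-k},\mathcal C)$ where the non-degeneracy estimates apply) is established before the sharp diameter bound at that step is invoked.
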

		
		The sets $V_{n}(x)$ are called hyperbolic pre-balls and their images $f^{n}(V_{n}(x)) = B_{\delta}(f^{n}(x))$, hyperbolic balls.
		
		\bigskip
		
		In the following, we give definitions for a map on a metric space to have similar behaviour to maps with hyperbolic times and which can be found in \cite{Pi1}.  
		
		Given $M$ a metric spaces and $f: M \to M$, we define for $p \in M$:
		\[
		\displaystyle \mathbb{D}^{-}(p) = \liminf_{x \to p} \frac{d(f(x),f(p)}{d(x,p)}
		\]
		Define also,
		\[
		\displaystyle \mathbb{D}^{+}(p) = \limsup_{x \to p} \frac{d(f(x),f(p)}{d(x,p)}
		\]
		We will consider points $x \in M$ such that 
		\[
		\displaystyle \limsup_{n \to \infty} \frac{1}{n} \sum_{i=0}^{n-1} \log \mathbb{D}^{-} \circ f^{i}(x) > 0.
		\]  
		The critical set $\mathcal{C}$ is the set of points $x \in M$ such that $\mathbb{D}^{-}(x) = 0$ or $\mathbb{D}^{+}(x) = \infty$. For  the non-degenerateness we ask that $\mathcal{C} \neq M$ and there exist $B, \beta >0$ such that
		
		\begin{itemize}
			
			\item $\frac{1}{B} d(x,\mathcal{C})^{\beta} \leq \mathbb{D}^{-}(x) \leq \mathbb{D}^{+}(x) \leq B d(x,\mathcal{C})^{-\beta}, x \not \in \mathcal{C}$.
			
			For every $x, y \in M \backslash \mathcal{C}$ with $d(x,y) < d(x,\mathcal{C})/2$ we have
			
			\item $\mid \log \mathbb{D}^{-}(x) - \log \mathbb{D}^{-}(y) \mid \leq \frac{B}{d(x,\mathcal{C})^{\beta}} d(x,y)$.
			
		\end{itemize}
		
		With these conditions we can see that all the consequences for hyperbolic times are valid here and the expanding sets and measures are zooming sets and measures.
		
		\begin{definition}
			We say that a map is \emph{conformal at p} if $\mathbb{D}^{-}(p) = \mathbb{D}^{+}(p)$. So, we define
			\[
			\displaystyle \mathbb{D}(p) = \lim_{x \to p} \frac{d(f(x),f(p)}{d(x,p)}.
			\]
		\end{definition}
		
		Now, we give an example of such an open non-uniformly expanding map.  
		
		\subsection{Expanding sets on a metric space} Let $\sigma : \Sigma_{2}^{+} \to \Sigma_{2}^{+}$ be the one-sided shift, with the usual metric:
		\[
		\displaystyle d(x,y) = \sum_{n=1}^{\infty} \frac{\mid x_{n} - y_{n} \mid}{2^{n}},
		\]
		where $x = \{x_{n}\}, y = \{y_{n}\}$. We have that $\sigma$ is a conformal map such that $\mathbb{D}^{-}(x) = 2, \text{for all} \, \, \, x \in \Sigma_{2}^{+}$. Also, every forward invariant set (in particular the whole $\Sigma_{2}^{+}$)  and all invariant measures for the shift $\sigma$ are expanding (then they are zooming). In particular, if we consider an invariant set that is not dense such that the reference measure has a Jacobian with bounded distortion, we can obtain an open shift map with $H \neq \emptyset$. To be precise, by taking any (previously fixed) zooming set $\Lambda \subset \Sigma_{2}^{+}$ which is not dense such that the reference measure has a Jacobian with bounded distortion, we apply our Theorem \ref{A} to obtain an open zooming system and a Markov structure adapted to a hole $H \subset \Sigma_{2}^{+}$ such that $H \cap \Lambda = \emptyset$. It is enough to take $r_{0}>0$ such that one of the balls of the open cover is disjoint from $\Lambda$. We can obtain finiteness of equilibrium states.
		
		\subsection{Zooming sets on a metric space (not expanding)} Let $\sigma : \Sigma_{2}^{+} \to \Sigma_{2}^{+}$ be the one-sided shift, with the following metric for $\sum_{n=1}^{\infty} b_{n} < \infty$:
		\[
		\displaystyle d(x,y) = \sum_{n=1}^{\infty} b_{n}\mid x_{n} - y_{n} \mid,
		\]
		where $x = \{x_{n}\}, y = \{y_{n}\}$ and $b_{n+k} \leq b_{n}b_{k}$ for all $n,k \geq 1$. By induction, it means that $b_{n} \leq b_{1}^{n}$. Let us suppose that $b_{n} \leq a_{n}:=(n+b)^{-a}, a>1, b>0$ for all $n \geq 1$. 
		
		We claim that $a_{n}$ defines a Lipschitz contraction for the shift map. We require that there exists $n_{0} > 1$ such that $b_{n} > a_{1}^{n} \geq b_{1}^{n}$ for $n \leq n_{0}$. So, the contraction is not exponential. In fact, if $x,y$ belongs to the cylinder $C_{k}$ we have
		\begin{eqnarray*}
			\displaystyle d(x,y) &=& \sum_{n=1}^{\infty} b_{n}\mid x_{n} - y_{n} \mid = \sum_{n=k+1}^{\infty} b_{n}\mid x_{n} - y_{n} \mid = \sum_{n=1}^{\infty} b_{n+k}\mid x_{n+k} - y_{n+k} \mid\\
			&\leq& b_{k} \sum_{n=1}^{\infty} b_{n}\mid x_{n+k} - y_{n+k} \mid = b_{k} d(\sigma^{k}(x),\sigma^{k}(y)) \leq a_{k} d(\sigma^{k}(x),\sigma^{k}(y)).
		\end{eqnarray*}
		It implies that
		\begin{eqnarray*}
			\displaystyle d(\sigma^{i}(x),\sigma^{i}(y)) \leq  a_{k-i} d(\sigma^{k-i}(\sigma^{i}(x)),\sigma^{k-i}(\sigma^{i}(y)))= a_{k-i}d(\sigma^{k}(x),\sigma^{k}(y)), i \leq k.
		\end{eqnarray*}
		It means that the sequence $a_{n}$ defines a Lipschitz contraction, as we claimed.
		
		Now, every forward invariant set (in particular the whole $\Sigma_{2}^{+}$)  and all invariant measures for the shift $\sigma$ are not expanding but they are  zooming. In particular, if we consider an invariant set that is not dense such that the reference measure has a Jacobian with bounded distortion, we can obtain an open shift map with $H \neq \emptyset$. To be precise, by taking any (previously fixed) zooming set $\Lambda \subset \Sigma_{2}^{+}$ which is not dense such that the reference measure has a Jacobian with bounded distortion, we apply our Theorem \ref{A} to obtain an open zooming system and a Markov structure adapted to a hole $H \subset \Sigma_{2}^{+}$ such that $H \cap \Lambda = \emptyset$. It is enough to take $r_{0}>0$ such that one of the balls of the open cover is disjoint from $\Lambda$. We can obtain finiteness of equilibrium states.

		\subsection{Uniformly expanding maps} As can be seen in \cite{OV} Chapter 11, we have the so-called \textbf{\textit{uniformly expanding maps}} which is defined on a compact differentiable manifold $M$ as a $C^{1}$ map $f:M \to M$ (with no critical set) for which there exists $\sigma > 1$ such that
		\[
		\|Df(x)v\|\geq \sigma \|v\|, \,\, \text{for every} \,\, x \in M, v \in T_{x}M.
		\]
		
		For compact metric spaces $(M,d)$ we define it as a continuous map $f:M \to M$, for which there exists $\sigma > 1, \delta>0$ such that for every $x \in M$ we have that the image of the ball $B(x,\delta)$ contains a neighbourhood of the ball $B(f(x),\delta)$ and
		\[
		d(f(a),f(b)) \geq \sigma d(a,b), \,\, \text{for every} \,\, a,b \in B(x,\delta).
		\]
		We observe that the uniformly expanding maps on differentiable manifolds satisfy the conditions for the definition on compact metric spaces, when they are seen as Riemannian manifolds.
		
		\subsection{Local diffeomorphisms}\label{local} As can be seen in details in \cite{A}, we will briefly describe a class of non-uniformly expanding maps.
		
		Here we present a robust ($C^{1}$ open) classes of local diffeomorphisms (with no critical set) that are non-uniformly expanding. Such classes of maps can be obtained, e.g., through deformation of a uniformly expanding map by isotopy inside some small region. In general, these maps are not uniformly expanding: deformation can be made in such way that the new map has periodic saddles.
		
		Let $M$ be a compact manifold supporting some uniformly expanding map $f_{0}$. $M$ could be the $d$-dimensional torus $\mathbb{T}^{d}$, for instance. Let $V \subset M$ be some small compact domain, so that the restriction of $f_{0}$ to $V$ is injective. Let $f$ be any map in a sufficiently small $C^{1}$-neighbourhood $\mathcal{N}$ of $f_{0}$ so that:
		
		\begin{itemize}
			\item $f$ is \textit{volume expanding everywhere}: there exists $\sigma_{1} > 1$ such that
			\[
			|\det Df(x)| > \sigma_{1} \,\, \text{for every} \,\, x \in M;
			\]	
			
			\item $f$ is \textit{expanding outside} $V$: there exists $\sigma_{0} > 1$ such that
			\[
			\|Df(x)^{-1}\| < \sigma_{0} \,\, \text{for every} \,\, x \in M \backslash V;
			\] 
			
			\item $f$ is \textit{not too contracting on} $V$: there is some small $\delta > 0$ such that
			\[
			\|Df(x)^{-1}\| < 1 + \delta \,\, \text{for every} \,\, x \in V.
			\]
		\end{itemize}
		
		In \cite{A} it is shown that this class satisfy the condition for non-uniform expansion. In the following we show a Lemma from \cite{A} which proves that such maps are non-uniformly expanding with Lebesgue as a reference measure.
		\begin{lemma}
			Let $B_{1},\dots, B_{k},B_{k+1}= V$ a partition of $M$ into domains such that $f$ is injective on $B_{j}, 1 \leq j \leq p+1$. There exists $\theta > 0$  such that the orbit of Lebesgue almost every point $x \in M$ spends a fraction $\theta$ of the time in $B_{1} \cap \dots B_{p}$, that is,
			\[
			\#\{0\leq j < n \mid f^{j}(x) \in B_{1} \cap \dots B_{p}\} \geq \theta n,
			\]
			for every large $n \in \mathbb{N}$.
		\end{lemma}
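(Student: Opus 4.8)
The plan is to run the volume-counting (Borel--Cantelli) argument of \cite{A}: the everywhere volume expansion $|\det Df| > \sigma_{1} > 1$ will be played against the combinatorial complexity of length-$n$ itineraries, the point being that the latter can be made to grow arbitrarily slowly once one insists on spending most of the time in $V = B_{p+1}$. Since $B_{1}\cup\cdots\cup B_{p}$ is exactly the complement of $V$, the assertion is equivalent to the statement that for Lebesgue almost every $x$ one has $\#\{0\le j<n : f^{j}(x)\in V\} < (1-\theta)n$ for all large $n$. For an itinerary $\underline{i} = (i_{0},\dots,i_{n-1})$ with $i_{j}\in\{1,\dots,p+1\}$, I write $C(\underline{i}) = \bigcap_{j=0}^{n-1} f^{-j}(B_{i_{j}})$ for the corresponding dynamical cylinder.

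The first step is a uniform volume estimate for cylinders: $\mathrm{Leb}(C(\underline{i})) \le \sigma_{1}^{-n}\,\mathrm{Leb}(M)$ for every $\underline{i}$. Because $f$ is injective on each $B_{j}$, a straightforward induction on $n$ shows that $f^{n}$ is injective on $C(\underline{i})$; then the change of variables formula together with $|\det Df^{n}| = \prod_{j<n}|\det Df\circ f^{j}| > \sigma_{1}^{n}$ gives
\[
\mathrm{Leb}(M) \ \ge\ \mathrm{Leb}\big(f^{n}(C(\underline{i}))\big) \ =\ \int_{C(\underline{i})}|\det Df^{n}|\,d\mathrm{Leb} \ \ge\ \sigma_{1}^{n}\,\mathrm{Leb}(C(\underline{i})).
\]
Next I would fix $\theta\in(0,1/2)$ and set $E_{n} = \{x : \#\{0\le j<n : f^{j}(x)\in V\}\ge(1-\theta)n\}$. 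Every point of $E_{n}$ lies in a cylinder $C(\underline{i})$ whose itinerary has at least $(1-\theta)n$ symbols equal to $p+1$; counting the at most $\theta n$ exceptional times together with the $p$ possible symbols at each of them, the number of such cylinders is at most $\sum_{m\le\theta n}\binom{n}{m}p^{m}\le \big(2^{H(\theta)}p^{\theta}\big)^{n}$, where $H(\theta)\to 0$ as $\theta\to 0^{+}$. Combining with the cylinder estimate,
\[
\mathrm{Leb}(E_{n}) \ \le\ \left(\frac{2^{H(\theta)}\,p^{\theta}}{\sigma_{1}}\right)^{n}\mathrm{Leb}(M).
\]

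The one step that requires genuine care --- and which I regard as the crux --- is the choice of $\theta$: since $2^{H(\theta)}p^{\theta}\to 1$ as $\theta\to 0^{+}$ while $\sigma_{1}>1$ is fixed, one can pick $\theta$ small enough that $\rho := 2^{H(\theta)}p^{\theta}/\sigma_{1}<1$. For this $\theta$ one has $\sum_{n}\mathrm{Leb}(E_{n})\le \mathrm{Leb}(M)\sum_{n}\rho^{n}<\infty$, so by the Borel--Cantelli lemma Lebesgue almost every $x$ belongs to $E_{n}$ for only finitely many $n$. For such $x$ there is $n_{0}(x)$ with $\#\{0\le j<n : f^{j}(x)\in V\}<(1-\theta)n$, hence $\#\{0\le j<n : f^{j}(x)\notin V\}>\theta n$, for all $n\ge n_{0}(x)$, which is precisely the claimed lower bound on the fraction of time spent in $B_{1}\cup\cdots\cup B_{p}$. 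Beyond this choice of $\theta$ I expect no obstacle: the hypotheses that $f$ be injective on each partition domain and that $|\det Df|>\sigma_{1}$ everywhere are exactly what make the cylinder count and the volume bound compatible.
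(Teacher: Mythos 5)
Your argument is correct and is essentially the proof of this lemma in the cited source \cite{A} (the paper itself only quotes the statement without proof): the injectivity on partition elements plus $|\det Df|>\sigma_{1}$ gives the cylinder bound $\mathrm{Leb}(C(\underline{i}))\le\sigma_{1}^{-n}\mathrm{Leb}(M)$, the Stirling/entropy count of itineraries with at least $(1-\theta)n$ symbols in $V$ is subexponential as $\theta\to 0^{+}$, and Borel--Cantelli yields the positive frequency $\theta$ of visits to $B_{1}\cup\dots\cup B_{p}$ (which is indeed how the statement's ``$B_{1}\cap\dots B_{p}$'' must be read, as the union, i.e.\ the complement of $V$). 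No gaps.
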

		
		\subsection{Open zooming systems from local diffeomorphisms} We can obtain an open zooming system such that the zooming set $\Lambda$ is disjoint from the hole $H$ ($\Lambda \cap H = \emptyset$) using a local diffeomorphism $f : M \to M$ which is non-uniformly expanding as in the subsection \ref{local}. 
		
		Let the zooming set $\Lambda = \cap_{j=-\infty}^{\infty}f^{j}(M\backslash V)$ with positive Lebesgue measure $m$. Since $\Lambda \cap V = \emptyset$, we can take a zooming reference measure $\mu = m / m(\Lambda)$ which has a Jacobian with bounded distortion. The zooming set $\Lambda$ is disjoint from $V$ and we can take  the hole $H \subset V$ given by Theorem \ref{A} (and $\Lambda \cap H = \emptyset$). This setup now allows us to obtain existence and finiteness of (open) equilibrium state. We observe that in the work \cite{ALP2}[Lemma 2.1](3) we have that the Lebesgue measure has a Jacobian with bounded distortion.
		
		As a concrete example on the interval $[a,b]$, we can take a dynamically defined Cantor set with positive Lebesgue measure. It can be seen in \cite{PT}[Chapter 4] as an expanding map $g$ over a disjoint union of intervals $P = I_{1} \uplus \dots \uplus I_{k}$ onto the interval $[a,b]$, where $I_{j} \in [a,b]$ is a compact interval for every $1 \leq j \leq k$, that is, every interval $I_{j}$ is taken onto $[a,b]$. Outside the union $P$ we can define the map to have a measurable map $f:[a,b] \to [a,b]$. In \cite{PT} we see that the map $g$ has bounded distortion and we can extend it to the map $f$ preserving this property. The hole $H$ can be taken outside the union $P$.

	\end{document}